\documentclass[12pt]{amsart}
\usepackage{amsmath}
\usepackage{amsthm}
\usepackage{amssymb}
\usepackage{amscd}
\usepackage{amsfonts}
\usepackage{amsbsy}
\usepackage{epsfig,afterpage}
\usepackage[dvips]{psfrag}
\usepackage{subfigure}
\usepackage{epsfig}
\usepackage{amsmath}
\usepackage{srcltx}
\usepackage{amsfonts}
\usepackage{amssymb}
\usepackage{psfrag}
\usepackage{verbatim}
\usepackage{graphicx}
\usepackage{amsmath}
\usepackage{amsthm}
\usepackage{amssymb}
\usepackage{amscd}
\usepackage{amsfonts}
\usepackage{amsbsy}
\usepackage{epsfig}
\usepackage[dvips]{psfrag}
\usepackage{multirow}
\usepackage{graphics}
\usepackage{epstopdf}
\usepackage{overpic}
\usepackage{float}

\usepackage{graphicx}

\newtheorem {theorem} {Theorem}
\newtheorem {proposition} [theorem]{Proposition}

\newtheorem {lemma}  [theorem]{Lemma}
\newtheorem {example} [theorem]{Example}

\newtheorem {definition} [theorem]{Definition}

\newtheorem{mtheorem}{Theorem}

\usepackage{float}
\usepackage{tikz, wrapfig}
\tikzset{node distance=3cm, auto}
\allowdisplaybreaks

\begin{document}

\title[Classification of constrained systems]
{On the classification of planar constrained differential systems under topological equivalence}

\author[O. H. Perez and P. R. da Silva]
{Otavio Henrique Perez and Paulo Ricardo da Silva}

\address{S\~{a}o Paulo State University (Unesp), Institute of Biosciences, Humanities and Exact Sciences. Rua C. Colombo, 2265, CEP 15054--000. S. J. Rio Preto, S\~ao Paulo, Brazil.}

\email{oh.perez@unesp.br}
\email{paulo.r.silva@unesp.br}

\thanks{ .}

\subjclass[2020]{34A09, 34C05, 34C08.}

\keywords {Constrained differential systems, Implicit ordinary differential equations, Topological equivalence, Blow ups, Newton polygon}
\date{}
\dedicatory{}
\maketitle

\begin{abstract}
This paper concerns the local study of analytic constrained differential systems (or impasse systems) of the form $A(x)\dot{x} = F(x)$, $x\in\mathbb{R}^{2}$, where $F$ is a vector field and $A$ is a matrix valued function. Using techniques of resolution of singularities with weighted blow ups, we extend well-known results in the literature on topological classification of phase portraits of planar vector fields to the context of such class of systems. We also study and classify phase portraits of constrained systems near singular points of the so called impasse set $\Delta=\{x:\det A (x) = 0\}$.
\end{abstract}

\section{Introduction, statement of the problem and main results}

\noindent

A \textbf{constrained differential system} (or simply constrained system) defined in an open set $U\subseteq\mathbb{R}^{2}$ is given by
\begin{equation}\label{eq-def-constrained}
A(\mathbf{x})\dot{\mathbf{x}} = F(\mathbf{x}),
\end{equation}
where $\mathbf{x}\in U\subseteq\mathbb{R}^{2}$, $A(\mathbf{x})$ is a matrix valued function and $F:U\rightarrow \mathbb{R}^{2}$ is a vector field. We assume that these maps are analytic in the open set $U$. A point $\mathbf{x_{0}}$ such that $\delta(\mathbf{x_{0}}) = 0$ is called \textbf{impasse point}, where $\delta:U\rightarrow\mathbb{R}$ denotes the determinant function $\det A(\mathbf{x})$. The so called \textbf{impasse set} is the set $\Delta = \{\mathbf{x}\in U: \delta(\mathbf{x}) = 0\}$.

Systems of the form \eqref{eq-def-constrained} have been widely studied in the literature. We refer to \cite{CardinSilvaTeixeira,ChuaOka1,LopesSilvaTeixeira,RabierRheinboldt,SotoZhito,Zhito} for theoretical aspects on this subject. For applications in electric circuit theory, see \cite{Riaza, Smale}.

This paper deals with constrained systems such that the matrix $A$ does not have constant rank. We will further suppose that $\delta(\mathbf{x}) = 0$ only in a closed subset of $U$. This implies that the adjoint matrix $A^{*}$ of $A$ is not identically zero and $AA^{*} = A^{*}A = \det(A)I$.

Multiplying both sides of \eqref{eq-def-constrained} by $A^{*}$, it can be shown that $\gamma$ is solution of \eqref{eq-def-constrained} if, and only if, $\gamma$ is a solution of
\begin{equation}\label{eq-def-constrained-2}
\delta(\mathbf{x})\dot{\mathbf{x}} = A^{*}(\mathbf{x})F(\mathbf{x}).
\end{equation}

The constrained system written in the form \eqref{eq-def-constrained-2} will be called \textbf{diagonalized constrained system} and, just as in \cite{CardinSilvaTeixeira}, the vector field
\begin{equation}\label{eq-def-adjoint-vector field}
X(\mathbf{x}) = A^{*}(\mathbf{x})F(\mathbf{x})
\end{equation}
will be called \textbf{adjoint vector field}. Observe that, outside the impasse set, the constrained system \eqref{eq-def-constrained} can be written as
$$\dot{\mathbf{x}} = A^{-1}(\mathbf{x})F(\mathbf{x}),$$
which is a classical ordinary differential equation.

At an impasse point $\mathbf{x_{0}}$, since the matrix $A(\mathbf{x_{0}})$ is not invertible, the classical results on the existence and uniqueness of the solutions break down. However, near such point we can describe the phase portrait as follows. In the open set where $\delta(\mathbf{x}) > 0$, the phase portrait is given by the adjoint vector field $X$. On the other hand, in the open set where $\delta (\mathbf{x}) < 0$ the phase portrait is given by $-X$.

The study of singularities of constrained systems whose matrix $A$ does not have constant rank was addressed in \cite{SotoZhito,Zhito}, where the authors considered singularities near regular points of the impasse set $\Delta$, such as tangencies between $\Delta$ and the adjoint vector field $X$ and equilibrium points of $X$ on $\Delta$. By regular impasse point we mean that the determinant function $\delta$ satisfies $\nabla\delta(\mathbf{x})\neq 0$. In such papers was adopted the following notion of equivalence.

\begin{definition}\label{def-orbitally-eq}
Two constrained systems $\delta_{i}(\mathbf{x})\dot{\mathbf{x}} = X_{i}(\mathbf{x})$, $i = 1,2,$  are \textbf{$C^{k}$-orbitally equivalent at the points $p_{i}\in\Delta_{i}$}, $i = 1,2$, if there are two open sets $V_{1}\ni p_{1}$ and $V_{2}\ni p_{2}$ and a $C^{k}$ diffeomorphism $\Phi: V_{1}\rightarrow V_{2}$ such that
\begin{enumerate}
  \item $\Phi$ maps the impasse set $\Delta_{1}$ to the impasse set $\Delta_{2}$;
  \item $\Phi$ maps the phase curves of $X_{1}$ in $V_{1}\backslash\Delta_{1}$ to the phase curves of $X_{2}$ in $V_{2}\backslash\Delta_{2}$, not necessarily preserving orientation.
\end{enumerate}
If $\Phi$ is a homeomorphism, we say that is a \textbf{$C^{0}$-orbital equivalence} or \textbf{topological orbital equivalence}. In the case where $\Phi$ preserves orientation, we say that $\Phi$ is an \textbf{orientation preserving $C^{k}$-orbital equivalence}.
\end{definition}

For normal forms of planar constrained systems where the matrix $A$ has constant rank, see \cite{ChuaOka1} for instance.

The main goal of this paper is to study singularities and classify phase portraits of constrained differential systems whose matrix $A$ does not have constant rank. Differently from \cite{SotoZhito,Zhito}, we consider singular points of the impasse set $\Delta$. Furthermore, the adjoint vector field $X$ can present equilibrium points on $\Delta$ more degenerated than those ones considered in these references. More precisely:

\begin{definition}\label{def-singularities}
We say that $p\in \Delta$ is a \textbf{singularity of the constrained system \eqref{eq-def-constrained-2}} if one of the following conditions is satisfied:
\begin{enumerate}
  \item $p$ is an equilibrium point of the adjoint vector field \eqref{eq-def-adjoint-vector field}, that is, $X(p) = 0$;
  \item the impasse set $\Delta$ is not smooth at $p$;
  \item $p$ is a tangency point between $X$ and $\Delta$, that is, $d\delta (X) = 0$.
\end{enumerate}
Otherwise, we say that $p$ is \textbf{non-singular}. See Figure \ref{fig-def-sing}.
\end{definition}

\begin{figure}[h!]
  \center{\includegraphics[width=0.75\textwidth]{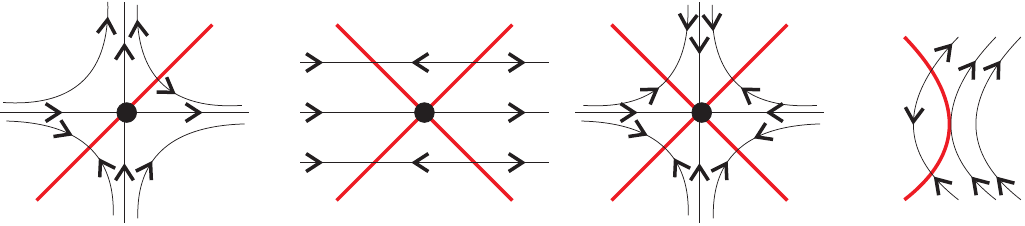}\hspace{0.5cm}\includegraphics[width=0.15\textwidth]{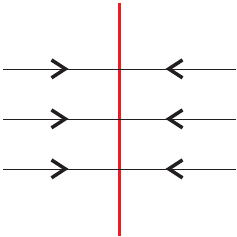}}\\
  \caption{Singularities of a planar constrained system. The rightmost phase portrait represents a non-singular point. The impasse set is highlighted in red.}\label{fig-def-sing}
\end{figure}

For our purposes, the resolution of singularities using weighted blow ups is the main tool. Resolution of singularities of vector fields was considered in \cite{BrunellaMiari, Dumortier, Panazzolo, Pelletier} and we refer to \cite{AlvarezFerragutJarque, DumortierLlibreArtes} for an introduction on this subject. The resolution of singularities of planar analytic constrained systems having impasse set was addressed in \cite{CardinSilvaTeixeira}, where the authors studied constrained systems whose impasse set is given by an homogeneous polynomial. In this paper are considered constrained systems whose impasse set is defined by an analytic function.

Roughly speaking, in the process of resolution we replace a singularity of the constrained system by a compact set homeomorphic to $\mathbb{S}^{1}$, called \textbf{exceptional divisor} (that is, we blow-up a singularity). In this new compact set, the constrained system (possibly) presents less degenerated singularities. Continuing this reasoning at each singularity that appears in the divisor, in the end of the process there will be only \textbf{elementary singularities} in the divisor. By elementary singularity, we mean the following.

\begin{definition}\label{def-elementary-points}
The planar constrained differential system \eqref{eq-def-constrained-2} is \textbf{elementary at $p$} if one of the following conditions holds:
\begin{enumerate}
  \item $p$ is a non singular point;
  \item $p$ is a semi-hyperbolic equilibrium point of $X$ (that is, at least one of its eigenvalues is nonzero) and $p\not\in\Delta$;
  \item If $p$ is a semi-hyperbolic equilibrium point of $X$ and $p\in\Delta$, then $\Delta$ coincides with a local separatrix of $X$ at $p$.
\end{enumerate}
See Figure \ref{fig-def-elem}.
\end{definition}

\begin{figure}[h!]
  \center{\includegraphics[width=0.25\textwidth]{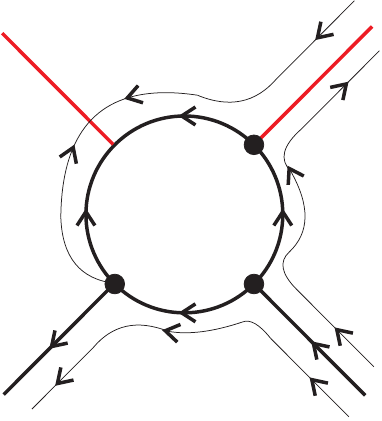}}\\
  \caption{Elementary points of a constrained system on the divisor. The impasse set is highlighted in red.}\label{fig-def-elem}
\end{figure}

Once there are only elementary singularities, we study the dynamics on the exceptional divisor in order to classify the phase portrait of the original constrained system near the blown-up singularity.

The resolution of singularities was applied in the study of topological classification of phase portraits of vector fields (that is, classification of phase portraits under topological equivalence) in \cite{BrunellaMiari,Dumortier}, and the results presented here can be seen as a generalization for planar constrained systems of those ones in these papers.

All along the paper a constrained system defined near a point $p$ will be denoted by a triple $(X, \delta,p)$, where $X$ is the adjoint vector field \eqref{eq-def-adjoint-vector field} and $\delta$ is an analytic irreducible function such that the impasse set is given by $\Delta = \{\delta = 0\}$. When $p$ is the origin, a constrained system is simply denoted by $(X, \delta)$. After a suitable finite sequence of weighted blow ups, we obtain the so called \textbf{strict transformed $(\widetilde{X}, \widetilde{\delta},D)$ of} $(X, \delta,p)$, where $D$ denotes the excepcional divisor.

In what follows we summarize the main results of the paper.

Firstly we give a condition based on the resolution of singularities to establish when two constrained differential systems are topologically orbitally equivalents (or $C^{0}$-orbitally equivalents) near a singularity. This condition is the so called elementary singularity scheme (see Section \ref{sec-construction} for a precise definition). Roughly speaking, we prove that if two planar constrained systems have the same resolution of singularities, then they are equivalents. This result extends Theorem B of \cite{Dumortier} to the context of constrained systems. More precisely:

\begin{mtheorem}\label{coroo-eq-conjugacy}
Consider two constrained systems $(X, \delta, p_{1})$ and $(Y, \gamma, p_{2})$, where $p_{1}\in \Delta_{\delta}$ and $p_{2}\in\Delta_{\gamma}$ are singularities. If the strict transformed of both constrained systems have the same non-degenerated elementary singularity scheme, then $(X, \delta, p_{1})$ and $(Y, \gamma, p_{2})$ are orientation preserving orbitally $C^{0}$-equivalents.
\end{mtheorem}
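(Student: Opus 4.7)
The plan is to adapt Dumortier's strategy for the topological classification of planar vector fields (Theorem B of \cite{Dumortier}) to the constrained setting. The extra bookkeeping compared with the vector field case is to track the impasse set throughout the desingularization process and to respect the reversal of the adjoint flow across $\Delta$ dictated by the sign of $\delta$.

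First, I would work on the blown up space. Since the strict transforms $(\widetilde{X}, \widetilde{\delta}, D)$ and $(\widetilde{Y}, \widetilde{\gamma}, D')$ share the same non-degenerated elementary singularity scheme, I obtain a homeomorphism $\phi_{0} \colon D \to D'$ identifying corresponding elementary singularities together with their local separatrix configurations, and sending the points where $\widetilde{\Delta}_{\delta}$ meets $D$ to the corresponding points for $\widetilde{\Delta}_{\gamma}$. By item (3) of Definition \ref{def-elementary-points}, at an elementary singularity lying on the impasse set the latter coincides with a local separatrix of the adjoint vector field, so $\phi_{0}$ already records all the combinatorial information relating flow and impasse structure.

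Second, I would extend $\phi_{0}$ to an orbital equivalence $\widetilde{\Phi}$ defined on a neighborhood of $D$. At an elementary singularity off the impasse set, the non-degeneracy built into the scheme (hyperbolic saddle, node, or semi-hyperbolic saddle-node of the prescribed topological type) allows me to invoke Hartman--Grobman or the semi-hyperbolic topological normal form to build a local orbital equivalence consistent with $\phi_{0}$. At an elementary singularity lying on $\widetilde{\Delta}$, the same local normal form can be chosen so as to additionally send the impasse separatrix to its image, so that $\widetilde{\Phi}$ matches impasse sets locally; here I use that near such a point the constrained phase portrait is two half-plane portraits of $\pm\widetilde{X}$ glued along $\widetilde{\Delta}$. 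Away from singularities, flow boxes give the extension along the regular part of the divisor, and along the regular part of $\widetilde{\Delta}$ I adapt the flow box so it is compatible with the involution reversing the adjoint vector field across the impasse set. These local pieces are glued together using the canonical hyperbolic, parabolic and elliptic sectors cut out by separatrices, by $D$ and by $\widetilde{\Delta}$, in the Markus--Neumann--Peixoto spirit, producing an orientation preserving topological orbital equivalence on a full neighborhood of $D$.

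Finally, I would descend $\widetilde{\Phi}$ through the finite sequence of weighted blow ups. Each blow up is a homeomorphism off its divisor and collapses $D$ (respectively $D'$) to $p_{1}$ (respectively $p_{2}$), so composing with the blow down maps produces a map $\Phi$ from a punctured neighborhood of $p_{1}$ onto a punctured neighborhood of $p_{2}$; continuity at the centre is automatic from $\widetilde{\Phi}(D) = D'$, and the requirements that $\Phi(\Delta_{\delta}) = \Delta_{\gamma}$ and that phase curves are preserved follow from the corresponding properties of $\widetilde{\Phi}$ together with the fact that blow ups transform the adjoint vector field and the impasse set in a controlled way. The main obstacle I anticipate lies in the gluing step of the previous paragraph: one must align all local equivalences so that they agree on overlaps along the regular part of $D$, are consistent across $\widetilde{\Delta}$ where the adjoint flow reverses sign, and are globally orientation preserving. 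The non-degeneracy hypothesis on the scheme is precisely what rules out the exceptional configurations, such as centers or higher order contacts between separatrices and the impasse set, that would otherwise obstruct this gluing.
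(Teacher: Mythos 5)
Your proposal follows essentially the same route as the paper: the paper's proof is exactly the combination of a descent lemma (Proposition \ref{prop-c0-eq}, your third step, where the blow-down maps transport the equivalence and continuity at the centre comes from $\widetilde{H}(D_{1})=D_{2}$) with a construction of the equivalence near the divisor (Proposition \ref{teo-eq-conjugacy}, your first two steps, carried out by decomposing a neighborhood of $D$ into sectors modeled on the basic singularities $V_{i}^{\alpha,\beta}$, $C_{j}^{\alpha,\beta}$ and gluing adjacent pieces with the Pasting Lemma, starting from a basic point that is not of type $V_{3}^{\alpha}$ or $C_{3}^{\alpha}$ so the construction closes up around the divisor). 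You also correctly identify that the non-degeneracy of the scheme is precisely what excludes the center--focus configuration that would obstruct closing the gluing.
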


The Theorem \ref{coroo-eq-conjugacy} is a key theorem that we will use to prove the Theorems \ref{teo-singular-curves-ade-resolution} and \ref{teo-brunellamiari-2}. The next goal is to classify phase portraits of planar constrained systems having singular impasse curve. We define the set $\mathcal{A}$ as the set of analytic planar constrained systems $(X, \delta)$ defined near the origin such that:
\begin{enumerate}
  \item The adjoint vector field is constant, that is, it satisfies $X(x,y) = (a,b)$, $a,b\in\mathbb{R}$ and $a^{2} + b^{2} \neq 0$.
  \item The function $\delta$ is one of the maps given by Arnold's ADE-classification \cite{Arnold} (see Table \ref{tab-arnold-ade} in Section \ref{sec-arnold}).
\end{enumerate}

The Theorem \ref{teo-singular-curves-ade-resolution} give conditions to assure when an analytic constrained system $(X, \delta)$ is orbitally equivalent to a system in $\mathcal{A}$, where the coefficients $a_{-1,0}$ and $b_{0,-1}$ in the expansion
\begin{equation}\label{eq-constrained-singular-curve-theorem-intro}
\left\{
  \begin{array}{rcl}
    \delta(x,y)\dot{x} & = & a_{-1,0} + a_{0,0}x + a_{-1,1}y + ..., \\
    \delta(x,y)\dot{y} & = & b_{0,-1} + b_{0,0}y + b_{1,-1}x + ...,
  \end{array}
\right.
\end{equation}
satisfy $a_{-1,0}^{2} + b_{0,-1}^{2} \neq 0$ and the function $\delta$ is one of the functions in Table \ref{tab-arnold-ade}.

\begin{mtheorem}\label{teo-singular-curves-ade-resolution}
Consider an analytic planar constrained system $(X, \delta)$ near the origin such that $X(0) \neq 0$ and the origin is an ADE-type singular point of $\delta$. Then the system \eqref{eq-constrained-singular-curve-theorem-intro} is orientation preserving $C^{0}$-orbitally equivalent to a system in $\mathcal{A}$ if it satisfies one of the conditions in Table \ref{tab-conditions-teo-c}, Section \ref{sec-arnold}.
\end{mtheorem}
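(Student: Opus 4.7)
The plan is to reduce the statement to Theorem \ref{coroo-eq-conjugacy}. Given $(X,\delta)$ as in the hypothesis, I would choose as a model the member of $\mathcal{A}$ with constant adjoint vector field $X_{0}\equiv(a_{-1,0},b_{0,-1})$ and $\delta_{0}$ equal to the Arnold normal form of the same ADE class as $\delta$. It then suffices to exhibit a common finite sequence of weighted blow ups after which the strict transforms $(\widetilde{X},\widetilde{\delta},D)$ and $(\widetilde{X_{0}},\widetilde{\delta_{0}},D)$ carry the same non-degenerate elementary singularity scheme; Theorem \ref{coroo-eq-conjugacy} then yields an orientation preserving topological orbital equivalence.

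For each ADE class listed in Table \ref{tab-arnold-ade}, the first step is to fix the weighted blow-up tree which resolves $\delta$. These trees are classical: they depend only on the Newton polygon of $\delta$, and thus apply unchanged to $\delta_{0}$ as well. After the last blow-up, $\widetilde{\delta}$ is a product of powers of coordinate functions times an analytic unit, and the divisor $D$ is a tree of projective lines. The double indexing of the coefficients $a_{i,j},b_{i,j}$ in \eqref{eq-constrained-singular-curve-theorem-intro} is designed to align with the weights of this resolution: on each chart of the final blow-up, the principal part of $\widetilde{X}$ (obtained by pulling back $X$ and dividing by the appropriate power of a uniformizing coordinate) is determined by an explicit, small set of these coefficients. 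The hypothesis $a_{-1,0}^{2}+b_{0,-1}^{2}\neq 0$ guarantees the non-vanishing of the very first principal part, while the further inequalities in Table \ref{tab-conditions-teo-c} guarantee the non-vanishing of the next principal parts on the charts where they become dominant.

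The central claim to verify, chart by chart, is that under the conditions of Table \ref{tab-conditions-teo-c} every singularity of the strict transform on $D$ is elementary in the sense of Definition \ref{def-elementary-points}: either a non-singular point, a semi-hyperbolic equilibrium of $\widetilde{X}$ disjoint from $\widetilde{\Delta}$, or a semi-hyperbolic equilibrium whose local separatrix coincides with $\widetilde{\Delta}$. Because the principal parts on each chart are identical for $(X,\delta)$ and for $(X_{0},\delta_{0})$ (the model being built from exactly those coefficients), the two elementary singularity schemes coincide, orientation included, and Theorem \ref{coroo-eq-conjugacy} applies.

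The main obstacle is the case-by-case bookkeeping across the ADE list: for each class one must write explicit charts, perform the pull-back and the division by the correct exceptional coordinate, and compute the linear part (or the semi-hyperbolic normal form) at every point of $D$. The $D_{k}$ and $E_{k}$ cases have the longest resolution trees and the largest number of charts to inspect, and it is here that each individual inequality of Table \ref{tab-conditions-teo-c} earns its keep, ruling out a specific degeneration — a nilpotent linear part, coincidence of a separatrix with an unintended divisor component, or a higher-order tangency between $\widetilde{X}$ and $\widetilde{\Delta}$ — that would otherwise prevent the strict transform from being elementary and so obstruct the direct application of Theorem \ref{coroo-eq-conjugacy}.
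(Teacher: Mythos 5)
Your proposal follows essentially the same route as the paper: compare the weighted blow-up resolution of \eqref{eq-constrained-singular-curve-theorem-intro} with that of the constant model $\delta\dot{x}=a_{-1,0}$, $\delta\dot{y}=b_{0,-1}$ in $\mathcal{A}$, check chart by chart that the conditions of Table \ref{tab-conditions-teo-c} force both strict transforms to have the same non-degenerated elementary singularity scheme, and conclude by Theorem \ref{coroo-eq-conjugacy}. The only imprecision is your claim that the blow-up tree depends only on the Newton polygon of $\delta$: the weights are read off the auxiliary vector field, whose polygon is the Minkowski sum of the supports of $\delta$ and $X$, and the inequalities such as $k-1<2n_{0}$ are there precisely to prevent a point like $(1,n_{0})$ from entering the main segment and changing the weights relative to the model --- a point your chart-by-chart verification would in any case uncover.
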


In section \ref{sec-arnold} we present all the possible phase portraits of the systems that belong to $\mathcal{A}$.

Concerning Theorem \ref{teo-brunellamiari-2}, suppose that the origin $0$ is a singularity of an analytic constrained system $(X,\delta)$, where $0$ is an equilibrium point of the adjoint vector field $X$ and a regular point of the impasse set $\Delta$ simultaneously. We associate a Newton polygon $\mathcal{P}$ to such constrained system and we show in Theorem \ref{teo-brunellamiari-2} that the terms in the boundary $\partial\mathcal{P}$ of the polygon determine the phase portrait near the origin under topological orbital equivalence. In other words, the terms in the boundary $\partial\mathcal{P}$ define the so called principal part $(X_{\Omega},\delta)$ of the analytic constrained system (see Section \ref{sec-newton-pol} for a precise definition), and we prove that, under some non-degeneracy conditions, the original constrained system is orbitally equivalent to its principal part.

From a practical point of view, Theorem \ref{teo-brunellamiari-2} says that if we want to study topological properties of a singularity in the regular part of $\Delta$, one can ``discard'' the terms of the Taylor expansion that are not in the boundary $\partial\mathcal{P}$ of the polygon. This result extends Theorem A of \cite{BrunellaMiari} to the context of constrained systems with regular impasse set.

\begin{mtheorem}\label{teo-brunellamiari-2}
Let $(X,\delta)$ be an analytic planar constrained system such that $\Delta = \{\delta = 0\}$ is a regular curve and the origin is an isolated Newton non-degenerated singularity. If $X$ has characteristic orbit (that is, the origin is neither a center nor a focus of $X$), then $(X_{\Omega},\delta)$ and $(X,\delta)$ are orientation preserving $C^{0}$-orbitally equivalent.
\end{mtheorem}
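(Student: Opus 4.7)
The strategy is to reduce Theorem \ref{teo-brunellamiari-2} to Theorem \ref{coroo-eq-conjugacy} by showing that $(X,\delta)$ and its principal part $(X_{\Omega},\delta)$ admit resolutions of singularities with the \emph{same} non-degenerated elementary singularity scheme. The philosophy mirrors the Brunella--Miari argument for vector fields, with the extra bookkeeping required to keep track of the impasse curve $\Delta$.

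First, I would normalize the impasse set. Since $\Delta$ is a regular analytic curve through the origin, there exists an analytic diffeomorphism bringing $\Delta$ to a coordinate axis, say $\Delta=\{y=0\}$; under such a change of coordinates $\delta$ can be written as $\delta(x,y)=y\cdot u(x,y)$ with $u(0,0)\neq 0$, and both the Newton polygon $\mathcal{P}$ of $X$ and its principal part $X_{\Omega}$ transform equivariantly (up to a non-vanishing factor that can be absorbed into $X$ without affecting the scheme). This reduces the problem to the case of a straight impasse line, and any orbital equivalence constructed downstream can be pulled back to the original coordinates.

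Next, I would carry out the resolution of singularities dictated by $\mathcal{P}$. To each compact edge $\Omega_{i}$ of $\mathcal{P}$ associate the weight vector $(\alpha_{i},\beta_{i})$ whose slope realizes that edge, and compose the corresponding weighted blow ups $\pi_{i}$ as in Section \ref{sec-construction}. The central algebraic fact is that, under a weighted blow up of weight $(\alpha,\beta)$, a monomial $x^{p}y^{q}$ contributes a factor $\epsilon^{\alpha p+\beta q}$; the terms sitting on the edge $\Omega_{i}$ carry the minimal exponent $d_{i}:=\alpha_{i}p+\beta_{i}q$, while every monomial strictly above the boundary carries a strictly larger exponent. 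After dividing the strict transform by $\epsilon^{d_{i}}$, the interior terms become a flat perturbation vanishing on the exceptional divisor $D_{i}$, so on $D_{i}$ the strict transform of $X$ and of $X_{\Omega}$ coincide. The straightened impasse set $\{y=0\}$ is monomial and behaves cleanly under weighted blow ups: it either becomes part of the divisor in one chart (and transverse to it in the others) or stays a coordinate axis, so $\widetilde{\delta}$ is the same for both systems.

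The Newton non-degeneracy hypothesis guarantees that the equilibrium points of the strict transform on each $D_{i}$ are semi-hyperbolic, hence elementary in the sense of Definition \ref{def-elementary-points}; together with the transversality just observed this yields an elementary scheme for $(X,\delta)$ that is identical to the one produced by $(X_{\Omega},\delta)$. The characteristic orbit hypothesis enters here to exclude the only obstruction left: if after resolution a singularity on $D_{i}$ had purely imaginary linear part corresponding to a center/focus on the base, the Newton polygon would fail to determine the local topology; ruling out centers and foci for $X$ at the origin forces each such singularity to be semi-hyperbolic and hence elementary, and the scheme is then non-degenerated. With both schemes coinciding, Theorem \ref{coroo-eq-conjugacy} delivers an orientation preserving $C^{0}$-orbital equivalence between $(X,\delta)$ and $(X_{\Omega},\delta)$.

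The main obstacle I expect is the careful checking that the interaction between the weighted blow ups and the straightened impasse curve does not spoil the elementary character of the singularities on the divisor, in particular near the intersection $\Delta\cap D$: one must verify that $\Delta$ either coincides with a separatrix of the resolved vector field or stays away from equilibria, so that condition (3) of Definition \ref{def-elementary-points} is satisfied. This amounts to inspecting the quasi-homogeneous polynomials attached to the edges meeting the $y$-axis side of $\mathcal{P}$ and invoking Newton non-degeneracy there; once this is settled, the rest of the argument is a direct translation of Brunella--Miari into the constrained setting.
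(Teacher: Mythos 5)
Your plan follows essentially the same route as the paper: pass to favorable coordinates where $\Delta=\{y=0\}$, compare the weighted-blow-up resolutions of $(X,\delta)$ and $(X_{\Omega},\delta)$ dictated by the Newton polygon, use Newton non-degeneracy to ensure the singularities appearing on the divisor are semi-hyperbolic (hence elementary) and the characteristic-orbit hypothesis to exclude the degenerate center--focus scheme, and then conclude via Theorem \ref{coroo-eq-conjugacy}. The obstacle you flag at $\Delta\cap D$ is exactly what the paper settles by its case analysis at the origin of the $x$-chart (according to whether $\mathcal{P}_{X}$ contains a point of the form $(m,0)$ or $(M,-1)$), so your outline matches the paper's argument.
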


The paper is organized as follows. In Section \ref{sec-preliminaries} we briefly introduce weighted blowing ups and the Theorem of resolution of singularities for planar constrained vector fields. In Section \ref{sec-construction} is given the proof of Theorem \ref{coroo-eq-conjugacy}, where we use the resolution of singularities to establish when two constrained systems are $C^{0}$-orbitally equivalents. We apply this result in the following sections. In Section \ref{sec-arnold} we recall the construction of the Newton polygon and see how to define such mathematical object for planar constrained systems. Moreover, we study and classify phase portraits of constrained systems whose impasse curve has a singular point given by Arnold's ADE-classification. Finally, in Section \ref{sec-newton-pol} we show that the terms in the boundary $\partial\mathcal{P}$ of the Newton polygon determines (under topological orbital equivalence) the phase portrait near a singularity of a constrained system in the case where the origin is an equilibrium point of $X$ and a regular point of $\Delta$.

\section{Preliminaries}\label{sec-preliminaries}
\noindent

The results presented in this paper strongly depends on the resolution of singularities. First of all we introduce weighted (or quasi-homogeneous) blow-ups and later we discuss some aspects concerning resolution of singularities of planar constrained systems. For an introduction on blow-ups for planar vector fields we refer \cite{AlvarezFerragutJarque, DumortierLlibreArtes}.

\subsection{Weighted blow-ups}
\noindent

Let $0$ be an isolated equilibrium point of the planar vector field $X$. Given positive integers $\omega_{1}$ and $\omega_{2}$, a \textbf{weighted} (or quasi homogeneous) \textbf{polar blow-up} is the map
\begin{table}[h!]
\centering
\begin{tabular}{lccc}
$\phi_{(\omega_{1},\omega_{2})}:$ & $\mathbb{S}^{1}\times\mathbb{R}$ & $\rightarrow$ & $\mathbb{R}^{2}$, \\
& $(\theta,r)$ & $\mapsto$ & $(r^{\omega_{1}}\cos\theta,r^{\omega_{2}}\sin\theta)$. \\
\end{tabular}
\end{table}

Let $\widehat{X}$ be the vector field defined in $\mathbb{S}^{1}\times\mathbb{R}$ that satisfies $\phi_{(\omega_{1},\omega_{2})*}(\widehat{X}) = X$. Define the vector field $\overline{X}$ as $\overline{X} = \frac{1}{r^{k}}\widehat{X}$, where $k\geq 1$ is the biggest positive integer such that $\overline{X}$ is an analytic vector field. Observe that $\overline{X}$ is defined in a new ambient space and it is invariant in $\mathbb{S}^{1}\times\{0\}$. The set $\mathbb{S}^{1}\times\{0\}$ is called \textbf{exceptional divisor} and the origin $0$ is called \textbf{blow up center}.

In general, during the resolution of singularities it is necessary to iterate a finite number of blowing ups, and considering polar blow ups can lead us to complicated trigonometric expressions. For our purposes, it is way more practical to consider \textbf{weighted} (or quasi homogeneous) \textbf{directional blow-ups}. We define, respectively, the positive $x$-directional and positive $y$-directional blow ups as the maps
\begin{table}[h!]
\centering
\begin{tabular}{lccc}
$\phi_{x,(\omega_{1},\omega_{2})}^{+}:$ & $\mathbb{R}_{\geq0}\times\mathbb{R}$ & $\rightarrow$ & $\mathbb{R}^{2}$, \\
& $(\tilde{x},\tilde{y})$ & $\mapsto$ & $(\tilde{x}^{\omega_{1}},\tilde{x}^{\omega_{2}}\tilde{y})$; \\
\end{tabular}
\end{table}
\begin{table}[h!]
\centering
\begin{tabular}{lccc}
$\phi_{y,(\omega_{1},\omega_{2})}^{+}:$ & $\mathbb{R}\times\mathbb{R}_{\geq0}$ & $\rightarrow$ & $\mathbb{R}^{2}$, \\
& $(\tilde{x},\tilde{y})$ & $\mapsto$ & $(\tilde{y}^{\omega_{1}}\tilde{x},\tilde{y}^{\omega_{2}})$. \\
\end{tabular}
\end{table}

Throughout this paper we only consider the computations in the positive directions because such calculations are analogous when we consider the negative directions. The exceptional divisor will be denoted by $\mathbb{E}_{x} = \{x = 0\}$ and $\mathbb{E}_{y} = \{y = 0\}$ in the $x$ and $y$ directions, respectively. Observe that the polar and directional blow ups are equivalent (see Figure \ref{fig-equivalence-polar-directional}).

\begin{figure}[h!]
	\begin{flushright}
		\begin{center}
			\begin{tikzpicture}
			\node (A) {$(-\displaystyle\frac{\pi}{2},\displaystyle\frac{\pi}{2})\times\mathbb{R}$};
			\node (B) [right of=A] {$\mathbb{R}^{2}$};
			\node (C) [below of=A] {$\mathbb{R}_{\geq0}\times\mathbb{R}$};
			\node (M) [right of=B] {$(0,\pi)\times\mathbb{R}$};
			\node (N) [right of=M] {$\mathbb{R}^{2}$};
			\node (O) [below of=M] {$\mathbb{R}\times\mathbb{R}_{\geq0}$};
			\large\draw[->] (A) to node {\mbox{{\footnotesize $\phi_{(\omega_{1},\omega_{2})}$}}} (B);
			\large\draw[->] (C) to node [swap] {\mbox{{\footnotesize $\phi_{x,(\omega_{1},\omega_{2})}^{+}$}}} (B);
			\large\draw[->] (A) to node {\mbox{{\footnotesize $\xi_{x}$}}} (C);
			\large\draw[->] (M) to node {\mbox{{\footnotesize $\phi_{(\omega_{1},\omega_{2})}$}}} (N);
			\large\draw[->] (O) to node [swap] {\mbox{{\footnotesize $\phi_{y,(\omega_{1},\omega_{2})}^{+}$}}} (N);
			\large\draw[->] (M) to node {\mbox{{\footnotesize $\xi_{y}$}}} (O);
			\end{tikzpicture}
		\end{center}
	\end{flushright}
	\caption{Equivalence between polar and positive $x$ directional blow up (left), and polar and positive $y$ directional blow up (right).}
\label{fig-equivalence-polar-directional}
\end{figure}
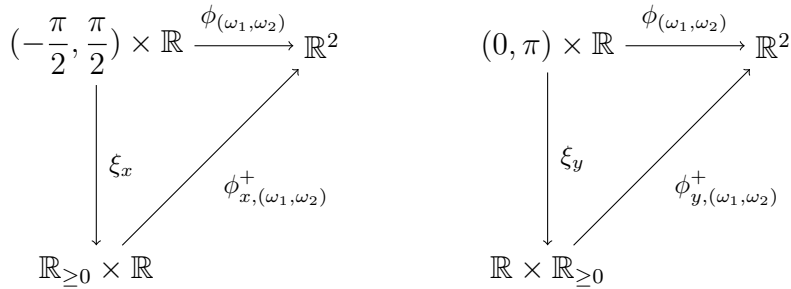

\subsection{Resolution of singularities for planar constrained systems}
\noindent

We emphasize that only isolated singularities of constrained systems are considered. Although a tangency point is a singularity, in this work we focus in the case where $p$ is an equilibrium point of $X$ and/or a singular point of the impasse set. Nevertheless, the theorem of resolution of singularities for 2-dimensional constrained differential systems (see Theorem \ref{teo-resolution-singularities}) also deals with tangency points.

In order to state the theorem of resolution of singularities, one needs to work in a more general category of analytic manifolds with corners. Roughly speaking, a 2-dimensional manifold with corners is a topological space locally modeled by open sets of $(\mathbb{R}_{\geq 0})^{2}$. Just as we mentioned before, after a blow-up in a singularity we obtain a new ambient space, which is a manifold with corners.

A constrained differential system defined in a 2-dimensional manifold with corners is a triple $(\mathcal{M}, \mathcal{X}, \mathcal{I})$, where $\mathcal{M}$ is a 2-dimensional real analytic manifold with corners, $\mathcal{X} = \{(U_{i}, X_{i})\}_{i\in I}$ is an 1-dimensional analytic oriented foliation defined on $\mathcal{M}$ and $\mathcal{I} = \{(U_{i}, \delta_{i})\}_{i\in I}$ is the principal ideal sheaf. On each open set $U_{i}$ of the open covering of $\mathcal{M}$, we associate the diagonalized constrained differential system
$$\delta_{i}(\mathbf{x})\dot{\mathbf{x}} = X_{i}(\mathbf{x}).$$

\begin{theorem}\label{teo-resolution-singularities}
Let $(\mathcal{M}, \mathcal{X}, \mathcal{I})$ be a 2-dimensional real analytic constrained differential system defined on a compact manifold with corners. Then there is a finite sequence of weighted blow ups
$$(\widetilde{\mathcal{M}}, \widetilde{\mathcal{X}}, \widetilde{\mathcal{I}}) = (\mathcal{M}_{n}, \mathcal{X}_{n}, \mathcal{I}_{n}) \xrightarrow{\Phi_{n}} \cdots \xrightarrow{\Phi_{0}} (\mathcal{M}_{0}, \mathcal{X}_{0}, \mathcal{I}_{0}) = (\mathcal{M}, \mathcal{X}, \mathcal{I})$$
such that $(\widetilde{M}, \widetilde{X}, \widetilde{\mathcal{I}})$ is elementary.
\end{theorem}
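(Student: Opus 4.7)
The plan is to reduce the global statement to local resolution at each non-elementary point, and then iterate. The set $S \subset \mathcal{M}$ of non-elementary points is closed; since isolated zeros of analytic vector fields and isolated critical points of analytic functions are locally finite, and $\mathcal{M}$ is compact, $S$ is finite. So it suffices to prove a local statement: for each $p \in S$ there is a finite sequence of weighted blow ups above $p$ after which every point of the exceptional divisor over $p$ is elementary. Patching these local desingularizations together yields the global sequence $(\mathcal{M}_0,\mathcal{X}_0,\mathcal{I}_0) \to \cdots \to (\mathcal{M}_n,\mathcal{X}_n,\mathcal{I}_n)$.

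For the local step, fix real-analytic coordinates near $p$ in which $\mathcal{X}$ is represented by $X = (P,Q)$ and $\mathcal{I}$ is generated by $\delta$. Following the strategy of Bendixson--Seidenberg and its weighted refinement due to Panazzolo (as in \cite{Panazzolo, BrunellaMiari, Dumortier}), I would associate to the pair $(X,\delta)$ a combinatorial complexity $\mu(X,\delta,p)$ built from a Newton polygon of the pair, along the lines recalled in Section \ref{sec-newton-pol}. A natural candidate is a lexicographic invariant such as (multiplicity of $X$ at $p$, order of $\delta$ at $p$, number of vertices of the joint Newton polygon, Milnor-type number of the principal part), taking values in a well-ordered set.

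The key technical step is the descent lemma: if $(X,\delta,p)$ is not elementary, there exist weights $(\omega_1,\omega_2)$ read off a side of the Newton polygon of the pair such that, after the weighted blow up $\phi_{(\omega_1,\omega_2)}$ and passage to the strict transform $(\widetilde{X},\widetilde{\delta})$, every point $q$ of the exceptional divisor over $p$ at which $(\widetilde{X},\widetilde{\delta},q)$ fails to be elementary satisfies $\mu(\widetilde{X},\widetilde{\delta},q) < \mu(X,\delta,p)$. One verifies this by writing the pullbacks in the charts $\phi^{+}_{x,(\omega_1,\omega_2)}$ and $\phi^{+}_{y,(\omega_1,\omega_2)}$, dividing by the maximal power of the exceptional variable that keeps the system analytic, and noting that the chosen weights align the principal part of the polygon with the divisor, killing at least one vertex (or strictly dropping the leading multiplicity) at every new non-elementary point. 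Because the exceptional divisor is compact and each blow up introduces only finitely many new singularities, iterating the descent lemma terminates in finitely many steps by well-ordering, producing only elementary points over $p$.

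The hard part will be the definition of the invariant itself, because ``elementary'' in Definition \ref{def-elementary-points} couples the vector field with the impasse function: case (3) demands that $\Delta$ coincide with a separatrix of $X$, which neither a pure desingularization of $X$ nor a pure desingularization of $\delta$ can guarantee on its own. The weights must therefore be chosen so that the Newton polygons of $X$ and of $\delta$ are reduced \emph{simultaneously}, and one must verify that the normal-crossings structure of the divisor, within the analytic manifold-with-corners framework, is preserved and compatible with the impasse divisor created along the way. Once a single invariant is produced that captures both data and strictly decreases under such adapted weighted blow ups, the remainder of the argument is the standard compactness plus well-ordering conclusion sketched above.
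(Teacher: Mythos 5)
Your overall strategy (localize at finitely many bad points, then run a descent on a combinatorial invariant attached to the pair $(X,\delta)$) is a legitimate route, and it is in fact closer in spirit to the weighted-blow-up proof the paper delegates to \cite{PerezSilva} than to the argument the paper actually sketches. The paper's proof is a \emph{two-stage decoupling}: first apply the classical Bendixson--Seidenberg theorem to reduce the singularities of the foliation $\mathcal{X}$ alone, and only afterwards resolve the ideal sheaf $\mathcal{I}$ by invoking Belotto's theorem on resolution of singularities subordinated to a foliation, which guarantees that the second stage does not destroy the elementarity (log-canonicity) already achieved for $\mathcal{X}$. That decomposition is precisely what lets the authors avoid the problem you correctly identify as ``the hard part'': they never need a single invariant controlling $X$ and $\delta$ simultaneously, because the compatibility between the resolved divisor and the separatrices of $X$ is the content of the cited subordination theorem.

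The genuine gap in your write-up is that the descent lemma --- the entire engine of your argument --- is asserted but not established. You name a candidate invariant only as ``a lexicographic invariant such as \dots'', you do not verify that any of its components is well defined on a manifold with corners after strict transform, and you do not check the strict decrease at the new non-elementary points; you then explicitly defer exactly this verification. Since condition (3) of Definition \ref{def-elementary-points} couples $\Delta$ to the separatrix structure of $X$, the decrease of a naive pair (multiplicity of $X$, order of $\delta$) can stall: a blow up that lowers the multiplicity of $X$ may create a new point where $\widetilde{X}$ is already semi-hyperbolic but $\widetilde{\Delta}$ is transverse to its separatrices, a point which is non-elementary yet invisible to both coordinates of your invariant. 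The object that repairs this --- and which the paper already has at hand --- is the auxiliary vector field $X_{A}=\delta\cdot X$ of Section \ref{sec-newton-pol}: its Newton polygon is the Minkowski sum of the polygons of $\delta$ and $X$, Theorem \ref{teo-eq-definitions} characterizes elementarity of the constrained system purely in terms of $\mathcal{P}_{X_{A}}$, and the descent is then run on $\mathcal{P}_{X_{A}}$ as in \cite{PerezSilva}. A secondary point: your finiteness claim for the non-elementary locus $S$ needs the generators of $\mathcal{X}$ to have isolated zeros and the tangency locus $\{d\delta(X)=0\}\cap\Delta$ to be finite (or $\Delta$ invariant); as stated it does not follow merely from analyticity and compactness.
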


A proof for Theorem \ref{teo-resolution-singularities} can be given by firstly applying the classical Bendixson--Seidenberg Theorem \cite{Bendixson,Seidenberg} to reduce the singularities of the 1-dimensional foliation $\mathcal{X}$ and later applying results from algebraic geometry on resolution of singularities subordinated to foliations. Indeed, once the equilibrium points of the foliation are elementary (that is, the foliation is Log-Canonical), by \cite{Belotto} there is a resolution for the ideal sheaf $\mathcal{I}$ that preserves the Log-Canonicity of the foliation. For a proof that does not require general results from algebraic geometry and uses weighted blow ups, we refer to \cite{PerezSilva}.

The Theorem \ref{teo-resolution-singularities} is a global theorem, but the study carried throughout this paper is local. Without loss of generality, we always suppose that the constrained system is written in the diagonalized form \eqref{eq-def-constrained-2}. A constrained system defined near a point $p$ will be denoted as a triple $(X,\delta,p)$, where $X$ is the adjoint vector field and $\delta$ is an irreducible analytic function such that $\Delta = \{\delta = 0\}$. \\

\begin{example}\label{exe-resolution}
Consider the following constrained differential equation
\begin{equation}\label{exe-cusp-fold}
xy\dot{x} = y, \ xy\dot{y} = x^{2}.
\end{equation}

The adjoint vector field is given by $X(x,y) = (y,x^{2})$, and therefore there is a cusp singularity at the origin. Moreover, the impasse set is given by $\Delta = \{\delta = xy = 0\}$. We consider the directional weighted blow-ups in the positive $x$ and $y$ directions
$$(x,y) = (\tilde{x}^{2},\tilde{x}^{3}\tilde{y}), \ (x,y) = (\bar{y}^{2}\bar{x},\bar{y}^{3}),$$
respectively. Thus we obtain the constrained system
$$\tilde{y}\dot{\tilde{x}} = \displaystyle\frac{\tilde{x}\tilde{y}}{2}, \ \tilde{y}\dot{\tilde{y}} = 1 - \displaystyle\frac{3\tilde{y}^{2}}{2}$$
in the positive $x$ direction and
$$\bar{x}\dot{\bar{x}} = 1 - \displaystyle\frac{2}{3}\bar{x}^{3}, \ \bar{x}\dot{\bar{y}} = \displaystyle\frac{\bar{x}^{2}\bar{y}}{3}$$
in the positive $y$ direction. In Section \label{sec-arnold} we briefly discuss how to make a good choice of the weight vector. See Figure \ref{fig-exe-cusp-dobra}.
\end{example}

\begin{figure}[h]
  \center{\includegraphics[width=0.65\textwidth]{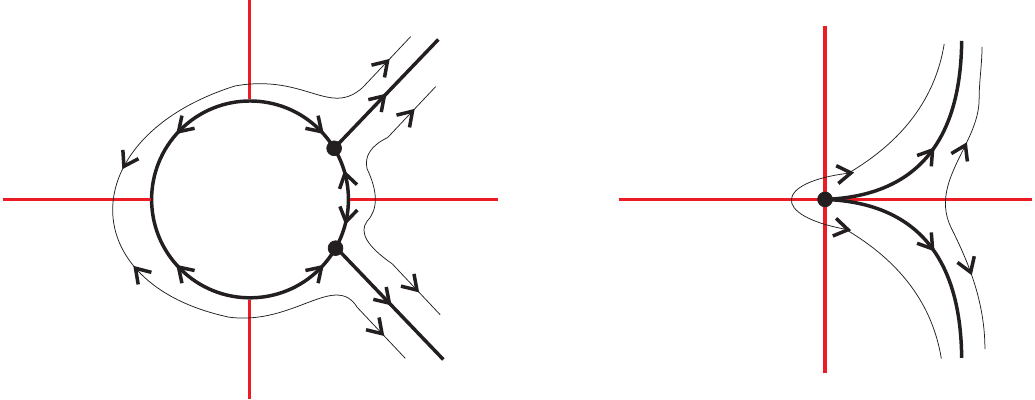}}\\
  \caption{Constrained system \eqref{exe-cusp-fold} after (left) and before (right) the weighted blow-up at the origin.}\label{fig-exe-cusp-dobra}
\end{figure}

\section{The construction of the orbital $C^{0}$-equivalence near the exceptional divisor}\label{sec-construction}
\noindent

Let $(X,\delta,p_{1})$ be a constrained differential system defined near an impasse point $p_{1}\in\Delta_{\delta}$. Suppose that $p_{1}$ is an equilibrium point of $X$ or a singular point of the impasse set $\Delta_{\delta}$. In this point is applied a suitable finite sequence of weighted blowing-ups, whose composition is simply denote by $\Phi$. According to Theorem \ref{teo-resolution-singularities}, at the end of the process, we obtain a constrained system $(\widetilde{X}, \widetilde{\delta}, D_{1})$ such that all the points in a neighborhood of the exceptional divisor $D_{1} = \Phi^{-1}(p_{1})$ are elementary. Moreover, $D_{1}$ is an invariant set of $\widetilde{X}$.

Analogously, let $(Y,\gamma,p_{2})$ be a planar constrained system where $p_{2}\in\Delta_{\gamma}$ is an equilibrium point of the adjoint vector field $Y$ or a singular point of the impasse set $\Delta_{\gamma}$. After a finite number of weighted blowing-ups (whose composition we denote by $\Psi$), we obtain the strict transformed $(\widetilde{Y}, \widetilde{\gamma}, D_{2})$ where all the points near the exceptional divisor $D_{2} = \Psi^{-1}(p_{2})$ are elementary and $D_{2}$ is an invariant set of $\widetilde{Y}$.

The main goal is to use the resolution of singularities in the topological classification of phase portraits. In order to achieve this objective, firstly we give conditions based in the resolution of singularities to assure the equivalence between two constrained systems, that is, we study conditions to assure the existence of an orientation preserving orbital $C^{0}$-equivalence $H$ between $(X, \delta, p_{1})$ and $(Y, \gamma, p_{2})$. The Proposition \ref{prop-c0-eq} will say that if there is an orientation preserving orbital $C^{0}$-equivalence $\widetilde{H}$ between $(\widetilde{X}, \widetilde{\delta}, D_{1})$ and $(\widetilde{Y}, \widetilde{\gamma}, D_{2})$ in a neighborhood of the exceptional divisor, then $(X, \delta, p_{1})$ and $(Y, \gamma, p_{2})$ are equivalents. See Figure \ref{fig-diag-prop}.

\begin{figure}[h!]
\begin{flushright}
\begin{center}
\begin{tikzpicture}
\node (A) {$(\widetilde{X}, \widetilde{\delta}, D_{1})$};
\node (B) [right of=A] {$(\widetilde{Y}, \widetilde{\gamma}, D_{2})$};
\node (C) [below of=A] {$(X, \delta, p_{1})$};
\node (D) [below of=B] {$(Y, \gamma, p_{2})$};
\large\draw[->] (A) to node {\mbox{{\footnotesize $\Phi$}}} (C);
\large\draw[->] (B) to node {\mbox{{\footnotesize $\Psi$}}} (D);
\large\draw[->] (A) to node {\mbox{{\footnotesize $\widetilde{H}$}}} (B);
\large\draw[->] (C) to node {\mbox{{\footnotesize $H$}}} (D);
\end{tikzpicture}
\end{center}
\end{flushright}
\caption{Diagram of Proposition \ref{prop-c0-eq}.}\label{fig-diag-prop}
\end{figure}

Let $W_{1}\ni p_{1}$ and $W_{2}\ni p_{2}$ be neighborhoods of the blow-up centers $p_{1}$ and $p_{2}$, respectively. Observe that $\widetilde{W}_{1} = \Phi^{-1}(W_{1}\backslash\{p_{1}\})$ and $\widetilde{W}_{2} = \Psi^{-1}(W_{2}\backslash\{p_{2}\})$ are open sets. Define the open sets $\widetilde{U} = \widetilde{W}_{1} \cup D_{1}$ and $\widetilde{V} = \widetilde{W}_{2} \cup D_{2}$ and suppose that there is a homeomorphism (possibly taking smaller neighborhoods $W_{1}$ and $W_{2}$ if it is necessary) $\widetilde{H}:\widetilde{U}\rightarrow \widetilde{V}$ satisfying the following conditions:

\begin{description}
  \item[(H1)] $\widetilde{H}(D_{1}) = D_{2}$;
  \item[(H2)] $\widetilde{H}(\widetilde{\Delta}_{\delta}) = \widetilde{\Delta}_{\gamma}$;
  \item[(H3)] Let $p\in\widetilde{U}$ and denote by $\phi_{\widetilde{X}}\big{(}p,t\big{)}$ the flow of $\widetilde{X}$ through $p$. Suppose that $\phi_{\widetilde{X}}\big{(}p,[0,s]\big{)} \subset \widetilde{U}\backslash\widetilde{\Delta}_{\delta}$, for some $s >0$. Then there is $\hat{s} > 0$ such that
      $$\widetilde{H}\Bigg{(}\phi_{\widetilde{X}}\big{(}p,[0,s]\big{)}\Bigg{)} = \phi_{\widetilde{Y}}\Bigg{(}\widetilde{H}(p),[0,\hat{s}]\Bigg{)}.$$
\end{description}


\begin{figure}[h]\center{
\begin{overpic}[width=0.60\textwidth]{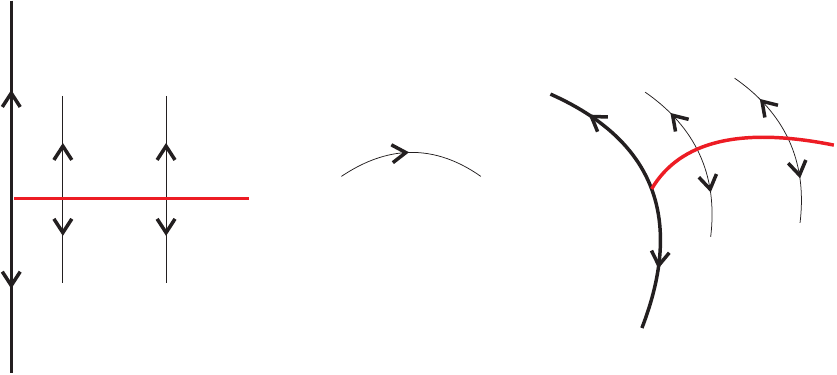}
\put(50,30){$\widetilde{H}$}
\put(0,46){$D_{1}$}
\put(65,35){$D_{2}$}
\put(30,15){$\widetilde{\Delta}_{\delta}$}
\put(100,30){$\widetilde{\Delta}_{\gamma}$}
\end{overpic}}
\caption{Homeomorphism $\widetilde{H}$ that satisfies the hypotheses (H1), (H2) and (H3). The impasse set is highlighted in red.}
\label{fig-hypotheses-h1-h2-h3}
\end{figure}

\begin{proposition}\label{prop-c0-eq}
If there is a homeomorphism $\widetilde{H}$ satisfying the conditions (H1), (H2) and (H3) above, then there are neighborhoods $U\ni p_{1}$ and $V\ni p_{2}$ and an orientation preserving orbital $C^{0}$-equivalence $H:U\rightarrow V$ between the constrained systems $(X, \delta, p_{1})$ and $(Y, \gamma, p_{2})$.
\end{proposition}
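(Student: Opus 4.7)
The natural approach is to push the homeomorphism $\widetilde{H}$ down through the blow-down maps. Concretely, I define
\[
H := \Psi \circ \widetilde{H} \circ \Phi^{-1}
\]
on $W_{1}\setminus\{p_{1}\}$ and extend it by $H(p_{1}):=p_{2}$. Each weighted blow-up in the composition making up $\Phi$ restricts to an analytic diffeomorphism off its exceptional divisor, so $\Phi:\widetilde{W}_{1}\to W_{1}\setminus\{p_{1}\}$ is a diffeomorphism, and the same holds for $\Psi$. Thus $H$ is well-defined and is automatically a homeomorphism from $W_{1}\setminus\{p_{1}\}$ onto $W_{2}\setminus\{p_{2}\}$.

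The first nontrivial step is to check continuity of $H$ and $H^{-1}$ at the blow-down centers. Given a sequence $x_{n}\to p_{1}$ with $x_{n}\neq p_{1}$, the properness of $\Phi$ together with the compactness of $D_{1}$ forces every accumulation point of $\Phi^{-1}(x_{n})$ to lie in $D_{1}$; by (H1) the accumulation set of $\widetilde{H}(\Phi^{-1}(x_{n}))$ then lies in $D_{2}$, which $\Psi$ collapses to $p_{2}$. Hence $H(x_{n})\to p_{2}=H(p_{1})$. The same argument applied to $\widetilde{H}^{-1}$, which plainly inherits analogues of (H1)--(H3), gives continuity of $H^{-1}$ at $p_{2}$. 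After possibly shrinking $W_{1}$ and $W_{2}$ we obtain a homeomorphism $H:U\to V$ with $H(p_{1})=p_{2}$.

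It then remains to verify the two conditions of Definition \ref{def-orbitally-eq}. Off the divisors, $\Phi$ and $\Psi$ map the strict transforms $\widetilde{\Delta}_{\delta}$ and $\widetilde{\Delta}_{\gamma}$ diffeomorphically onto $\Delta_{\delta}\setminus\{p_{1}\}$ and $\Delta_{\gamma}\setminus\{p_{2}\}$, so (H2) transports directly to $H(\Delta_{\delta}\cap U)=\Delta_{\gamma}\cap V$, the equality at the blow-down points following by continuity. For phase curves, recall that the post-blow-up adjoint vector field $\widetilde{X}$ is obtained from the pull-back of $X$ by dividing by a factor $r^{k}$ that is strictly positive off $D_{1}$; therefore $\Phi$ sends each oriented finite orbit arc of $\widetilde{X}$ not meeting $D_{1}$ onto an oriented finite orbit arc of $X$, and similarly for $\Psi$ and $Y$. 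Combining this with (H3), in which $\hat{s}>0$, yields the orientation-preserving correspondence between finite orbit arcs of $X$ in $U\setminus\Delta_{\delta}$ and of $Y$ in $V\setminus\Delta_{\gamma}$.

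The principal technical obstacle is precisely the continuity of $H$ and $H^{-1}$ at the blow-down points, which genuinely uses the properness of $\Phi$ and $\Psi$, compactness of $D_{1}$ and $D_{2}$, and hypothesis (H1); everything else amounts to transporting the conclusions off the divisors through the diffeomorphisms $\Phi|_{\widetilde{W}_{1}}$ and $\Psi|_{\widetilde{W}_{2}}$, with orientation preservation guaranteed by positivity of the scalar factors produced by the weighted blow-ups.
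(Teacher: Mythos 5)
Your proof is correct and takes essentially the same route as the paper's: define $H=\Psi\circ\widetilde{H}\circ\Phi^{-1}$ away from the blow-up center, extend by $H(p_{1})=p_{2}$, and transport (H1)--(H3) through the diffeomorphisms $\Phi|_{\widetilde{W}_{1}}$ and $\Psi|_{\widetilde{W}_{2}}$. In fact you supply a detail the paper leaves implicit, namely the continuity of $H$ and $H^{-1}$ at the blow-down centers via properness of $\Phi$, $\Psi$ and compactness of the divisors.
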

\begin{proof}
Denote $U = \Phi(\widetilde{U})$ and $V = \Psi(\widetilde{V})$ and define the map $H:U\rightarrow V$ as
$$H(p_{1}) = p_{2}; \ H(p) = \Psi\circ \widetilde{H} \circ \Phi^{-1}(p), \ p\in \Phi(\widetilde{W}_{1}).$$

We claim that $H$ is an orientation preserving orbital $C^{0}$-equivalence between $(X, \delta, p_{1})$ and $(Y, \gamma, p_{2})$. Recall that $\Phi$ restricted to $\widetilde{W}_{1} = \Phi^{-1}(W_{1}\backslash\{p_{1}\})$ is an analytic diffeomorphism that maps $\widetilde{\Delta}_{\delta}$ into $\Delta_{\delta}$, and it maps phase curves of $\widetilde{X}$ into phase curves of $X$, preserving orientation. Analogous properties for $\Psi$ restricted to $\widetilde{W}_{2} = \Psi^{-1}(W_{2}\backslash\{p_{2}\})$ are true. Moreover, $\Phi(D_{1}) = p_{1}$, $\widetilde{H}(D_{1}) = D_{2}$ and $\Psi(D_{2}) = p_{2}$.

A straightforward computation shows that $H$ maps impasse set into impasse set. Furthermore, if $\phi_{X}\big{(}p,t\big{)}$ is the flow of $X$ through $p\in U$ and $\phi_{X}\big{(}p,[0,s]\big{)} \subset U\backslash\Delta_{\delta}$ for some $s >0$, then there is $\hat{s} > 0$ such that
$$H\Bigg{(}\phi_{X}\big{(}p,[0,s]\big{)}\Bigg{)} = \phi_{Y}\Bigg{(}H(p),[0,\hat{s}]\Bigg{)},$$
and therefore $H$ is an orientation preserving orbital $C^{0}$-equivalence.
\end{proof}

Since the existence of $\widetilde{H}$ assures the existence of an orientation preserving orbital $C^{0}$-equivalence $H$ between $(X, \delta, p_{1})$ and $(Y, \gamma, p_{2})$, now we will provide a condition based on the resolution of singularities that guarantees the existence of such homeomorphism $\widetilde{H}$. This condition is the so called \emph{elementary singularity scheme}. The next theorem says that if $(\widetilde{X}, \widetilde{\delta}, D_{1})$ and $(\widetilde{Y}, \widetilde{\gamma}, D_{2})$ have the same elementary singularity scheme, then such triples are (orientation preserving) orbitally $C^{0}$-equivalent near the exceptional divisor. Such result and the Proposition \ref{prop-c0-eq} will assure that $(X, \delta, p_{1})$ and $(Y, \gamma, p_{2})$ are orbitally $C^{0}$-equivalent near the blow-up center.

\subsection{Basic points, basic singular interval and elementary singularity scheme}
\noindent

Let $(X,\delta,p_{1})$ be a planar constrained system where $p_{1}\in\Delta_{\delta}$ is an equilibrium point of $X$ or a singular point of the impasse set. Consider its strict transformed $(\widetilde{X}, \widetilde{\delta}, D_{1})$ where all points in the exceptional divisor $D_{1}$ are elementary.

We say that a coordinate system is \textbf{adapted to the exceptional divisor} if in such coordinate system the exceptional divisor is given by $\mathbb{E}_{x} = \{x = 0\}$, $\mathbb{E}_{y} = \{y = 0\}$ or $\mathbb{E}_{xy} = \{xy = 0\}$ (see Definition 5.1, \cite{Dumortier}).

Define the following vector fields with restricted domain, where $\alpha,\beta\in\{-1,1\}$:
\begin{enumerate}
  \item $V_{1}^{\alpha,\beta}$: $\alpha x\frac{\partial}{\partial x} + \beta y\frac{\partial}{\partial y},$ \ $y\geq 0$. See Figure \ref{fig-bvf-1};
  \item $V_{2}^{\alpha,\beta}$: $\beta x^{2}\frac{\partial}{\partial x} + \alpha y\frac{\partial}{\partial y},$ \ $y\geq 0$. See Figure \ref{fig-bvf-2};
  \item $V_{3}^{\alpha}$: $\alpha x\frac{\partial}{\partial x} - \alpha y\frac{\partial}{\partial y},$ \ $x\geq 0$ and $y\geq 0$. See Figure \ref{fig-bvf-3-4};
  \item $V_{4}^{\alpha}$: $\alpha y\frac{\partial}{\partial y},$ \ $0\leq x \leq 1$ and $y\geq 0$. See Figure \ref{fig-bvf-3-4}.
\end{enumerate}

Analogously, define the following constrained differential systems with restricted domain, where $\alpha,\beta\in\{-1,1\}$:
\begin{enumerate}
  \item $C_{1}^{\alpha,\beta}$: $x\dot{x} = \alpha x, x\dot{y} = \beta y,$ \ $y\geq 0$. See Figure \ref{fig-bcs-1};
  \item $C_{2}^{\alpha,\beta}$: $x\dot{x} = \beta x^{2}, x\dot{y} = \alpha y,$ \ $y\geq 0$. See Figure \ref{fig-bcs-2};
  \item $C_{3}^{\alpha}$: $x\dot{x} = 0,  x\dot{y} = \alpha,$ \ $0\leq x \leq 1$ and $y\geq 0$. See Figure \ref{fig-bcs-3};
  \item $C_{4}^{\alpha}$: $x\dot{x} = 0, x\dot{y} = \alpha y,$ \ $0\leq x \leq 1$ and $y\geq 0$. See Figure \ref{fig-bcs-3}.
\end{enumerate}

\begin{figure}[h!]
  \center{\includegraphics[width=0.75\textwidth]{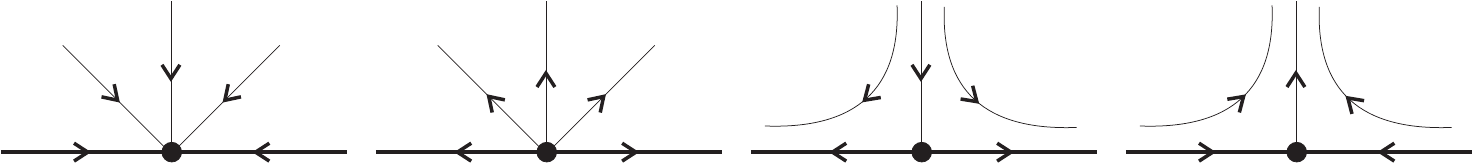}}\\
  \caption{Basic singularities of the form $V^{\alpha,\beta}_{1}$.}\label{fig-bvf-1}
\end{figure}

\begin{figure}[h!]
  \center{\includegraphics[width=0.75\textwidth]{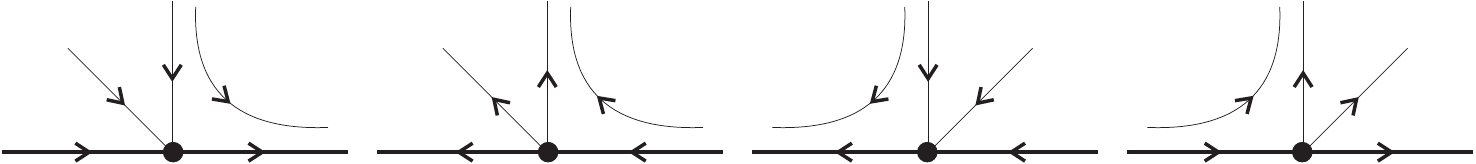}}\\
  \caption{Basic singularities of the form $V^{\alpha,\beta}_{2}$.}\label{fig-bvf-2}
\end{figure}

\begin{figure}[h!]
  \center{\includegraphics[width=0.27\textwidth]{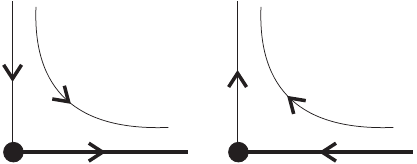}\hspace{0.5cm}\includegraphics[width=0.35\textwidth]{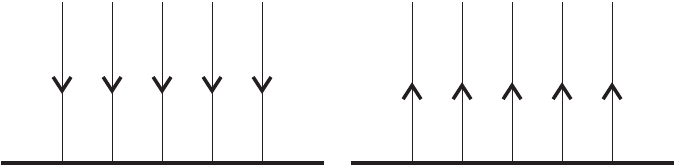}}\\
  \caption{Basic singularities of the form $V^{\alpha}_{3}$ (left) and $V^{\alpha}_{4}$ (right).}\label{fig-bvf-3-4}
\end{figure}

\begin{figure}[h!]
  \center{\includegraphics[width=0.75\textwidth]{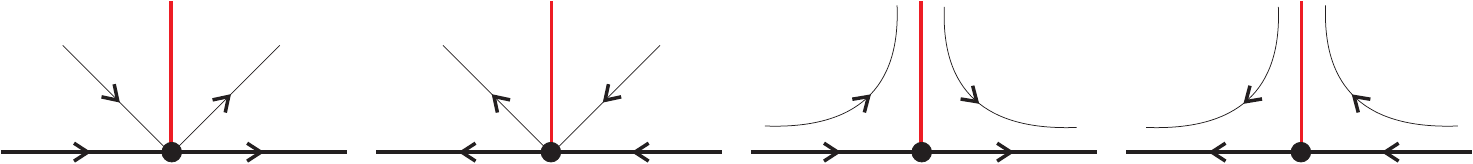}}\\
  \caption{Basic singularities of the form $C^{\alpha,\beta}_{1}$. The impasse set is highlighted in red.}\label{fig-bcs-1}
\end{figure}

\begin{figure}[h!]
  \center{\includegraphics[width=0.75\textwidth]{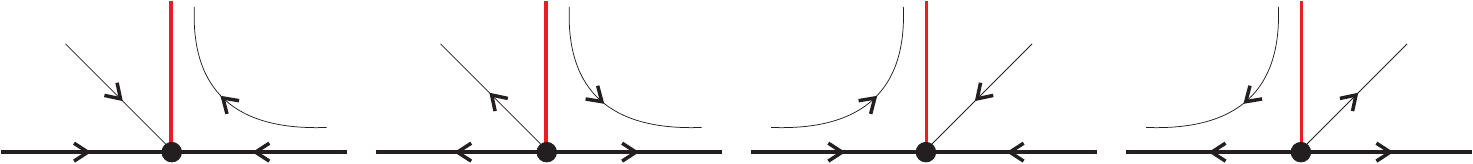}}\\
  \caption{Basic singularities of the form $C^{\alpha,\beta}_{2}$. The impasse set is highlighted in red.}\label{fig-bcs-2}
\end{figure}

\begin{figure}[h!]
  \center{\includegraphics[width=0.40\textwidth]{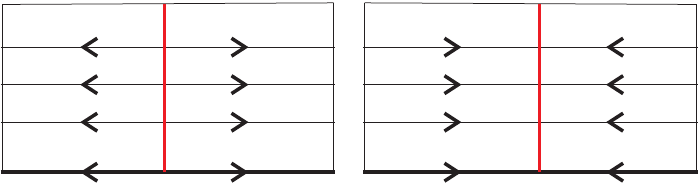}\hspace{0.5cm}\includegraphics[width=0.40\textwidth]{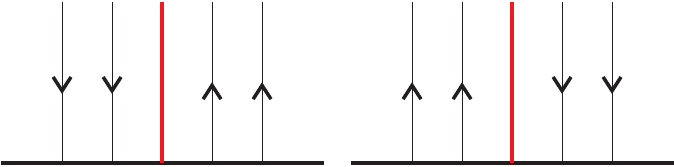}}\\
  \caption{Basic singularities of the form $C^{\alpha}_{3}$ (left) $C^{\alpha}_{4}$ (right). The impasse set is highlighted in red.}\label{fig-bcs-3}
\end{figure}



\begin{definition}
A point $p\in D_{1}$ is a \textbf{basic singularity} if in a neighborhood $U$ of $p$ there is a coordinate system adapted to the divisor such that the constrained system is orientation preserving orbitally $C^{0}$-equivalent to $V_{i}^{\alpha,\beta}$ or $C_{j}^{\alpha,\beta}$, where $i,j = 1,2,3$.
\end{definition}

\begin{definition}
A \textbf{basic singular interval} is an arc $d\subset D_{1}$ such that all the points in $d$ are equilibrium points and in a neighborhood $U$ of $d$ there is a coordinate system adapted to the divisor such that the constrained system is orientation preserving orbitally $C^{0}$-equivalent to $V_{4}^{\alpha}$ or $C_{4}^{\alpha}$.
\end{definition}

Since the excepcional divisor is a finite union of smooth arcs, one can enumerate them (without loss of generality) in the clockwise sense. We suppose that $d_{i+1}$ follows $d_{i}$ and $d_{n + 1} = d_{1}$. With this orientation, one can also order the basic singularities and the basic singular intervals in the excepcional divisor. The arrangement of basic singularities and basic singular intervals in the divisor defines a finite word constructed with the alphabet
$$\{V_{i}^{\alpha,\beta}\}_{i = 1}^{2}\cup\{C_{j}^{\alpha,\beta}\}_{j = 1}^{2}\cup\{V_{i}^{\alpha}\}_{i = 3}^{4}\cup\{C_{j}^{\alpha}\}_{j = 3}^{4}.$$

Such arrangement defines an equivalence relation in the set $\Sigma$ of triples $(\widetilde{X}, \widetilde{\delta}, D_{1})$ in the following way: two triples $(\widetilde{X}, \widetilde{\delta}, D_{1})$ and $(\widetilde{Y}, \widetilde{\gamma}, D_{2})$ are equivalent if, and only if, the word associated to $(\widetilde{X}, \widetilde{\delta}, D_{1})$ can be changed to the word associated to $(\widetilde{Y}, \widetilde{\gamma}, D_{2})$ by cyclic permutations.

\begin{definition}
An equivalence class in $\Sigma$ is called \textbf{elementary singularity scheme}.
\end{definition}

\begin{example}
The elementary singularity scheme of the constrained system Example \ref{exe-resolution} is given by the word $C^{1}_{3}V^{-1,1}_{1}C^{1}_{3}V^{-1,1}_{1}C^{1}_{3}C^{-1}_{3}$.
\end{example}

The next theorem is true under the hypothesis that the elementary singularity schemes do not contain only words written with symbols of the form $V_{3}^{\alpha}$ or $C_{3}^{\alpha}$. This is equivalent to require the following. Let $(\widetilde{X}, \widetilde{\delta}, D_{1})$ and $(\widetilde{Y}, \widetilde{\gamma}, D_{2})$ be the strict transformed of the constrained systems $(X, \delta, p_{1})$ and $(Y, \gamma, p_{2})$, respectively, where $p_{1}\in \Delta_{\delta}$ and $p_{2}\in \Delta_{\gamma}$. If $p_{1}$ and $p_{2}$ are equilibrium points of the adjoint vector field $X$ and $Y$, respectively, then both $p_{1}$ and $p_{2}$ have characteristic orbit. In other words, $p_{1}$ and $p_{2}$ are neither a center nor a focus. See Figure \ref{fig-forbidden-cases}.

\begin{definition}
An elementary singularity scheme is \textbf{degenerated} if the word associated to the triple $(\widetilde{X}, \widetilde{\delta}, D_{1})$ only contains symbols of the form $V_{3}^{\alpha}$ or $C_{3}^{\alpha}$. Otherwise, we say that the elementary singularity scheme is \textbf{non-degenerated}.
\end{definition}

\begin{figure}[h]
  \center{\includegraphics[width=0.50\textwidth]{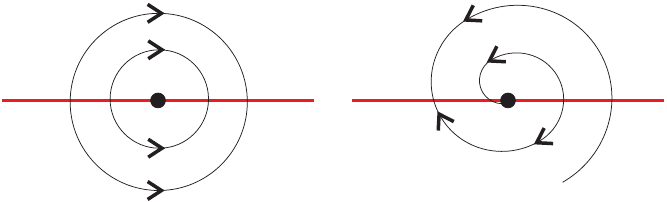}}\\
  \caption{Singularities of a constrained system. The equilibrium point of the adjoint vector field does not have characteristic orbit.}\label{fig-forbidden-cases}
\end{figure}

\subsection{Proof of Theorem \ref{coroo-eq-conjugacy}: The construction of the orbital $C^{0}$-equivalence}
\noindent

The next lemma is a well-known result from general topology and it will be useful in the proof of Proposition \ref{teo-eq-conjugacy}.

\begin{lemma}\label{lemma-pasting-lemma} (The Pasting Lemma, Theorem 18.3, \cite{Munkres}) Let $M, N$ be topological spaces and let $A_{1}, A_{2}\subset M$ be open (or closed) subspaces. Denote $A = A_{1}\cap A_{2}$. Let $f: A_{1}\rightarrow f(A_{1})\subset N$ and $g:A_{2}\rightarrow g(A_{2})\subset N$ be homeomorphisms such that $f|_{A} = g|_{A}$. Then there is a homeomorphism $H:A_{1}\cup A_{2}\rightarrow H(A_{1}\cup A_{2})\subset N$ such that $H|_{A_{1}} = f$ and $H|_{A_{2}} = g$.
\end{lemma}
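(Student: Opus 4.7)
The Pasting Lemma is a classical fact from point-set topology, so my plan is simply to recall the standard proof and note the small additional step needed to upgrade ``continuous map'' to ``homeomorphism''. I would define the candidate map piecewise by
\[
H(x) \;=\; \begin{cases} f(x), & x \in A_{1},\\ g(x), & x \in A_{2}, \end{cases}
\]
which is unambiguous on the overlap $A = A_{1}\cap A_{2}$ precisely because of the compatibility $f|_{A}=g|_{A}$, and is surjective onto $H(A_{1}\cup A_{2}) = f(A_{1})\cup g(A_{2})$ by construction.

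To establish continuity I would split along the two cases offered by the hypothesis. In the open case, if $V\subseteq N$ is open then
\[
H^{-1}(V)\;=\;f^{-1}(V)\cup g^{-1}(V),
\]
where each summand is open in the corresponding $A_{i}$ by continuity of $f$ and $g$; since $A_{i}$ is itself open in the union $A_{1}\cup A_{2}$, the union of the two preimages is open in $A_{1}\cup A_{2}$. The closed case is handled identically: for $C\subseteq N$ closed, $H^{-1}(C) = f^{-1}(C) \cup g^{-1}(C)$ is the union of two sets closed in $A_{1}\cup A_{2}$, hence closed.

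To upgrade the result to a homeomorphism onto the image, I would apply the same pasting construction to the inverses $f^{-1}:f(A_{1})\to A_{1}$ and $g^{-1}:g(A_{2})\to A_{2}$. Since both are continuous homeomorphisms by hypothesis and agree on the overlap $f(A) = g(A)$ (again thanks to $f|_{A}=g|_{A}$), the same open/closed argument applied to the decomposition $H(A_{1}\cup A_{2}) = f(A_{1})\cup g(A_{2})$ produces a continuous map there, which one then checks is a two-sided inverse of $H$; injectivity of $H$ follows from the individual injectivity of $f$ and $g$ together with the overlap identification, which in the intended application to Proposition \ref{prop-c0-eq} will be automatic because the local pieces will already come from restrictions of a single candidate homeomorphism. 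There is no substantive obstacle here: the lemma is a pure bookkeeping device, whose only purpose is to legitimise the local-to-global strategy used to build the orbital equivalence $\widetilde{H}$ in the proof of Theorem \ref{coroo-eq-conjugacy}.
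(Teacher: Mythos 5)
Your construction of $H$ and the open/closed continuity argument are exactly the content of Munkres' Theorem 18.3, which is all the paper itself invokes (it gives no proof of this lemma, only the citation), so that part is fine. The genuine problem is in the upgrade to a homeomorphism: as literally stated, the lemma is false without the additional hypothesis $f(A_{1})\cap g(A_{2}) = f(A)$. Take $M=N=\mathbb{R}$, $A_{1}=[0,1]$, $A_{2}=[1,2]$, $f(x)=x$ and $g(x)=2-x$; these are homeomorphisms onto their images agreeing on $A=\{1\}$, yet $H(0)=H(2)=0$, so $H$ is not injective. Consequently your sentence ``injectivity of $H$ follows from the individual injectivity of $f$ and $g$ together with the overlap identification'' is not a proof, and your pasting of $f^{-1}$ and $g^{-1}$ silently assumes that $f(A_{1})\cap g(A_{2})$ equals $f(A)=g(A)$ --- which is precisely the missing hypothesis, not a consequence of $f|_{A}=g|_{A}$. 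A smaller but related point: to run the open/closed argument on the inverses you also need $f(A_{1})$ and $g(A_{2})$ to be open (resp.\ closed) in $f(A_{1})\cup g(A_{2})$, which again hinges on controlling the image overlap.

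You do half-acknowledge the issue by deferring injectivity to the application, and that instinct is right: in the proof of Proposition \ref{teo-eq-conjugacy} the sectors are glued along arcs whose images under the two local equivalences meet only along the common boundary, so the condition holds there. But a complete write-up must either add $f(A_{1})\cap g(A_{2})=f(A)$ to the hypotheses of the lemma (the paper's own statement shares this imprecision --- Munkres 18.3 asserts only continuity of the pasted map, not that it is a homeomorphism) or verify that condition explicitly at each application. With that hypothesis in place, injectivity is immediate and your argument for the continuity of $H^{-1}$ goes through.
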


\begin{proposition}\label{teo-eq-conjugacy} Suppose that the triples $(\widetilde{X}, \widetilde{\delta}, D_{1})$ and $(\widetilde{Y}, \widetilde{\gamma}, D_{2})$ have the same non-degenerated elementary singularity scheme. Then there is an orientation preserving orbital $C^{0}$-equivalence $H$ between $(\widetilde{X}, \widetilde{\delta}, D_{1})$ and $(\widetilde{Y}, \widetilde{\gamma}, D_{2})$ such that $H(D_{1}) = D_{2}$.
\end{proposition}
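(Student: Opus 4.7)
The plan is to build $\widetilde{H}$ by covering $D_1$ with finitely many open pieces of three kinds---neighborhoods $U_i^1$ of the basic singularities $p_i$, tubular neighborhoods $N_j^1$ of the basic singular intervals $d_j$, and ``transition regions'' $T_k^1$ carrying regular flow between consecutive basic pieces---constructing a local orientation preserving orbital $C^0$-equivalence on each piece, and then gluing via the Pasting Lemma \ref{lemma-pasting-lemma}. A parallel covering $\{U_i^2, N_j^2, T_k^2\}$ of $D_2$ is provided by the equality of elementary singularity schemes.

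First I would produce the local equivalences at basic pieces. By definition of basic singularity, on each $U_i^1$ the triple $(\widetilde{X},\widetilde{\delta})$ is orientation preserving $C^0$-orbitally equivalent, in an adapted chart, to exactly one of the model systems $V_k^{\alpha,\beta}$ or $C_k^{\alpha,\beta}$. Since the scheme words for $(\widetilde{X},\widetilde{\delta},D_1)$ and $(\widetilde{Y},\widetilde{\gamma},D_2)$ coincide up to cyclic permutation, the matching basic singularity $p_i' \in D_2$ has a neighborhood $U_i^2$ equivalent to the very same model. Composing the two normalizing equivalences produces a homeomorphism $h_i\colon U_i^1 \to U_i^2$ sending $D_1$ to $D_2$, $\widetilde{\Delta}_\delta$ to $\widetilde{\Delta}_\gamma$ and orbits of $\widetilde{X}$ to orbits of $\widetilde{Y}$ with preserved orientation. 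The same construction with the models $V_4^\alpha$ and $C_4^\alpha$ yields homeomorphisms $h_j\colon N_j^1 \to N_j^2$ at basic singular intervals.

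Next I would handle the transition regions. On a $T_k^1$ between consecutive basic pieces, $\widetilde{X}$ has no equilibria and---because the adjacent basic models have characteristic orbits---there are transverse sections $\sigma_k^1$, $\tau_k^1$ placed inside $U_i^1 \cap T_k^1$ and $U_{i+1}^1 \cap T_k^1$ such that every orbit of $\widetilde{X}$ in $T_k^1$ crosses $\sigma_k^1$ exactly once and reaches $\tau_k^1$ (or escapes through the boundary of $\widetilde{U}$) in positive time. The Flow Box Theorem lets me straighten $\widetilde{X}$ on $T_k^1$, and the restrictions of $h_i$ and $h_{i+1}$ to $\sigma_k^1$ and $\tau_k^1$ give homeomorphisms onto $\sigma_k^2 \subset T_k^2$ and $\tau_k^2 \subset T_k^2$. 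I extend $h_i|_{\sigma_k^1}$ along the straightened flow to obtain a homeomorphism $T_k^1 \to T_k^2$ carrying orbits to orbits and respecting impasse sets (which, on $T_k^1$, are either empty, contained in $D_1$, or locally a regular transverse curve that flows to a regular transverse curve). Before the transition step I reparametrize each $h_i$ by flowing along $\widetilde{X}$ near $\sigma_k^1$ so that $h_i$ is already ``flow-box compatible'' on a thin collar of the transversal; this ensures that the two candidate homeomorphisms on $T_k^1$---the flow-extension of $h_i|_{\sigma_k^1}$ and of $h_{i+1}|_{\tau_k^1}$---agree on their overlap, and in particular agree on the piece of $D_1$ inside $T_k^1$, both being the unique orbit-respecting continuation of the boundary data.

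The Pasting Lemma then glues all $h_i$, $h_j$ and transition-region maps into a single homeomorphism $\widetilde{H}\colon \widetilde{U}\to\widetilde{V}$ satisfying (H1)--(H3) with $\widetilde{H}(D_1)=D_2$. The main obstacle is the compatibility adjustment described above: one must simultaneously preserve the adapted-chart normal forms on each $U_i^1$ and arrange that the boundary values on the separating transversals fit a single flow-box. This is where the non-degeneracy hypothesis enters decisively. If every letter of the scheme were $V_3^\alpha$ or $C_3^\alpha$, no orbit of $\widetilde{X}$ would leave the corner sectors transversely to $D_1$, the blown-down singularity would be a center or focus, and the flow-box propagation used to glue adjacent basic pieces would break down because there would be no characteristic orbit on which to anchor the transversal data. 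Under the non-degeneracy assumption at least one sector has an orbit leaving (or entering) $D_1$ transversely, and running the gluing starting from such a sector and proceeding cyclically around $D_1$ produces a consistent $\widetilde{H}$, completing the proof.
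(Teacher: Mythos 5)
Your proposal is correct and follows essentially the same strategy as the paper's proof: decompose a neighborhood of the divisor into basic pieces and regular ``flow'' sectors, build local equivalences from the model normal forms, propagate along flow boxes, glue with the Pasting Lemma, and start from a piece that is not of type $V_{3}^{\alpha}$ or $C_{3}^{\alpha}$ so the cyclic construction closes. Your explicit compatibility adjustment on the separating transversals is in fact a more careful treatment of the overlap step that the paper only asserts.
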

\begin{proof}
Given an arc $c_{i}\subset D_{1}$, we have its respective $\widehat{c}_{i}\subset D_{2}$. The construction of the homeomorphism starts near a basic point $p$ (or basic singular interval) of $(\widetilde{X}, \widetilde{\delta}, D_{1})$ that is not of the form $V_{3}^{\alpha}$ or $C_{3}^{\alpha}$. Hence there is an equivalent point $\widehat{p}$ (or basic singular interval) of $(\widetilde{Y}, \widetilde{\gamma}, D_{2})$. This means that there is adapted coordinates
\begin{center}
$h: U\rightarrow \mathbb{R}^{2}, \ \ \widehat{h}: \widehat{U}\rightarrow \mathbb{R}^{2},$
\end{center}
for $(\widetilde{X}, \widetilde{\delta}, D_{1})$ and $(\widetilde{Y}, \widetilde{\gamma}, D_{2})$, respectively, where $U\ni p$ and $\widehat{U}\ni \widehat{p}$. Denote $h(U) = \widehat{h}(\widehat{U}) = W$. The neighborhoods $U$ and $\widehat{U}$ are chosen in such way that there is only one basic point (or only one basic singular interval).


\begin{figure}[h]\center{
\begin{overpic}[width=0.50\textwidth]{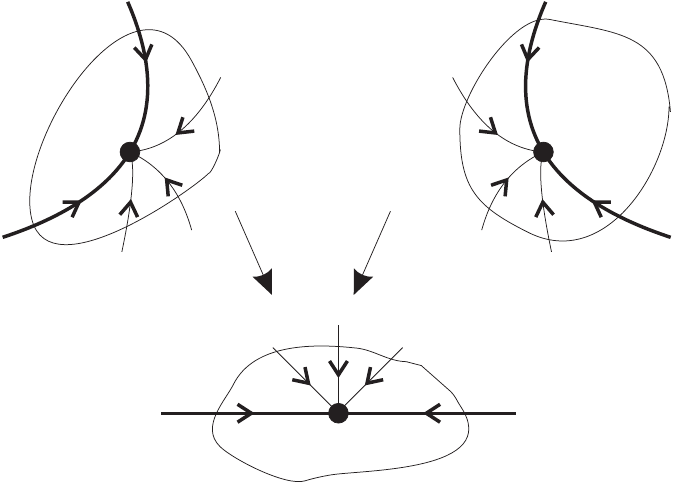}
\put(10,28){$U$}
\put(90,30){$\widehat{U}$}
\put(38,38){$h$}
\put(52,38){$\widehat{h}$}
\put(68,18){$W$}
\end{overpic}}
\caption{Neighborhoods $U$, $\widehat{U}$ and $W$.}
\label{fig-dumortier}
\end{figure}

If we start the construction of the homeomorphism from a basic point, we take the open set $W$ in such way that it is transversal to the coordinate axes. On the other hand, if the construction starts from a basic singular interval, the neighborhood $W$ is chosen in such way that it is transversal to the lines $\{x = 0\}$ and $\{x = 1\}$.

Therefore, we have that $h^{-1}(\partial W)$ is transversal to $D_{1}\cap U$. This is also true when we take $(\widehat{h})^{-1}(\partial W)$ and $D_{2}\cap \widehat{U}$. Applying this reasoning to all basic points and basic singular intervals in the exceptional divisor, a sufficiently small neighborhood of the exceptional divisor $D_{1}$ is divided into a finite number of sectors. If this sector is not locally given by $V_{4}^{\alpha}$, we then subdivide such sector by the segment $h^{-1}(\{x = 0\})$. We apply the same reasoning for $(\widetilde{Y}, \widetilde{\gamma}, D_{2})$.

These sectors are bounded by arcs of the excepcional divisor, phase curves, impasse curves or curves transversal to the divisor. Considering $\alpha\in\{-1,1\}$, these sectors are written in local coordinates as follows:

\begin{itemize}
  \item Sector $S_{1}^{\alpha}$: $\alpha(x^{2} + y^{2})\frac{\partial}{\partial x}$, where $-1\leq x \leq 1$ and $y\geq 0$. This sector is equivalent to the basic point $V^{\alpha}_{3}$;
  \item Sector $S_{2}^{\alpha}$: $\alpha x\frac{\partial}{\partial x} + \alpha y\frac{\partial}{\partial y}$, where $-1\leq x \leq 1$ and $y\geq 0$;
  \item Sector $S_{3}^{\alpha}$: $\alpha y\frac{\partial}{\partial y}$, where $-1\leq x \leq 1$ and $y\geq 0$;
  \item Sector $F^{\alpha}$: $\alpha\frac{\partial}{\partial x}$, where $-1\leq x \leq 1$ and $y\geq 0$;
  \item Sector $SI_{1}^{\alpha}$: $x\dot{x} = \alpha x, x\dot{y} = \alpha y$, where $\alpha x \geq 0$ and $y \geq 0$;
  \item Sector $SI_{2}^{\alpha}$: $x\dot{x} = -\alpha x, x\dot{y} = -\alpha y$, where $\alpha x \geq 0$ and $y \geq 0$;
  \item Sector $SI_{3}^{\alpha}$: $x\dot{x} = \alpha x, x\dot{y} = -\alpha y$, where $\alpha x \geq 0$ and $y \geq 0$;
  \item Sector $SI_{4}^{\alpha}$: $x\dot{x} = -\alpha x, x\dot{y} = \alpha y$, where $\alpha x \geq 0$ and $y \geq 0$;
  \item Sector $SI_{5}^{\alpha}$: $x\dot{x} = 0, x\dot{y} = \alpha y$, where $\alpha x \geq 0$ and $y \geq 0$;
  \item Sector $SI_{6}^{\alpha}$: $x\dot{x} = 0, x\dot{y} = -\alpha y$, where $\alpha x \geq 0$ and $y \geq 0$;
  \item Sector $FI^{\alpha}$: $x\dot{x} = \alpha, x\dot{y} = 0$, where $-1\leq x \leq 1$ and $y\geq 0$.
\end{itemize}

We remark that the sector $S_{1}^{\alpha}$ is $C^{0}$-equivalent to $V_{3}^{\alpha}$.

\begin{figure}[h!]
  \center{\includegraphics[width=0.75\textwidth]{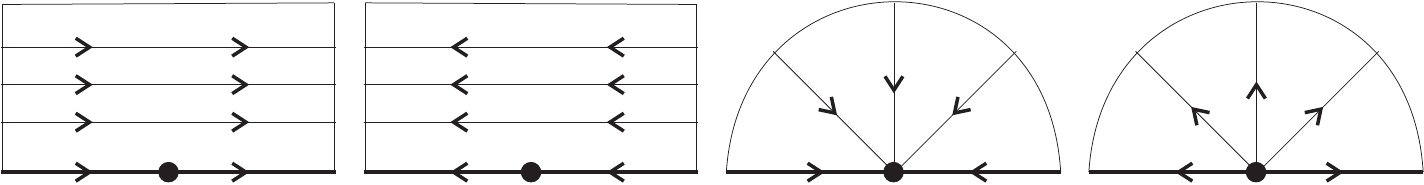}}\\
  \caption{Sectors $S_{1}^{\alpha}$ (left) and $S_{2}^{\alpha}$ (right).}\label{fig-sector-s1-s2}
\end{figure}

\begin{figure}[h!]
  \center{\includegraphics[width=0.75\textwidth]{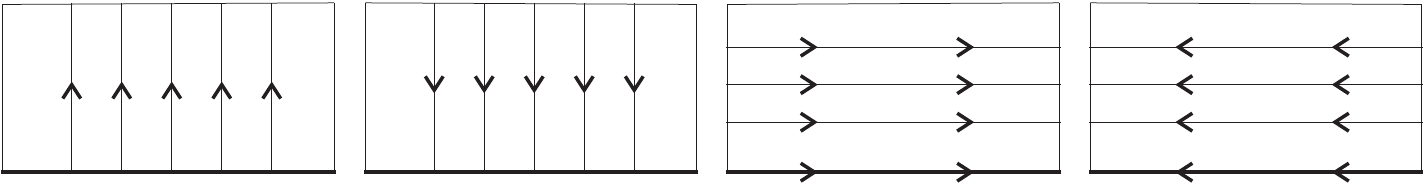}}\\
  \caption{Sectors $S_{3}^{\alpha}$ (left) and $F^{\alpha}$ (right).}\label{fig-sector-s3-fb}
\end{figure}

\begin{figure}[h!]
  \center{\includegraphics[width=0.65\textwidth]{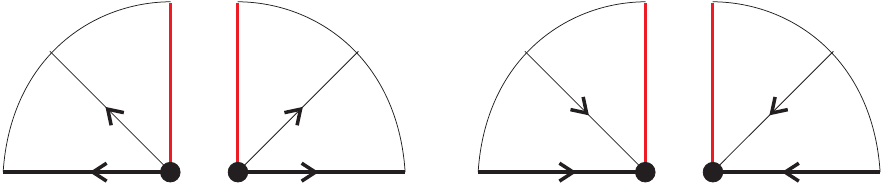}}\\
  \caption{Sectors $SI_{1}^{\alpha}$ (left) and $SI_{2}^{\alpha}$ (right). The impasse set is highlighted in red.}\label{fig-sector-si1-si2}
\end{figure}

\begin{figure}[h!]
  \center{\includegraphics[width=0.65\textwidth]{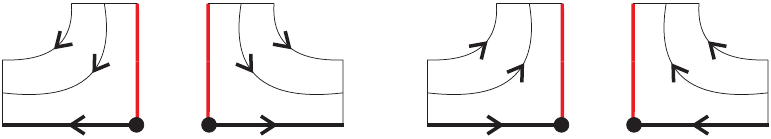}}\\
  \caption{Sectors $SI_{3}^{\alpha}$ (left) and $SI_{4}^{\alpha}$ (right). The impasse set is highlighted in red.}\label{fig-sector-si3-si4}
\end{figure}

\begin{figure}[h!]
  \center{\includegraphics[width=0.65\textwidth]{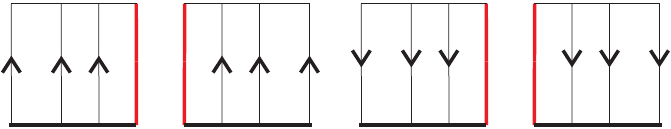}}\\
  \caption{Sectors $SI_{5}^{\alpha}$ (left) and $SI_{6}^{\alpha}$ (right). The impasse set is highlighted in red.}\label{fig-sector-si5-si6}
\end{figure}

\begin{figure}[h!]
  \center{\includegraphics[width=0.50\textwidth]{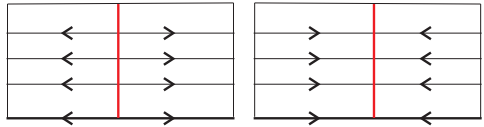}}\\
  \caption{Sector $FI^{\alpha}$. The impasse set is highlighted in red.}\label{fig-sector-fbi}
\end{figure}

Consider once again the point $p$ that we took in the beginning of this proof. Recall that this point is neither $V_{3}^{\alpha}$ nor $C_{3}^{\alpha}$. We know that $(\widehat{h})^{-1}\circ h: U\rightarrow \widehat{U}$ is a homeomorphism such that $h(p) = \widehat{p}$, it maps phase curves of $\widetilde{X}$ into phase curves of $\widetilde{Y}$ and it sends impasse set into impasse set. Moreover, $(\widehat{h})^{-1}\circ h$ sends excepcional divisor into excepcional divisor. Then we have constructed a homeomorphism for this first sector.

The adjacent sector (in the clockwise sense) must be of the form $F^{\alpha}$ or $FI^{\alpha}$. In this second sector it is defined a $C^{0}$-equivalence $\widehat{\widehat{h}}$ between $(\widetilde{X}, \widetilde{\delta}, D_{1})$ and $(\widetilde{Y}, \widetilde{\gamma}, D_{2})$. Observe now that on $\partial U$ the homeomorphisms $(\widehat{h})^{-1}\circ h$ and $\widehat{\widehat{h}}$ coincide.

\begin{figure}[h!]
  \center{\includegraphics[width=0.50\textwidth]{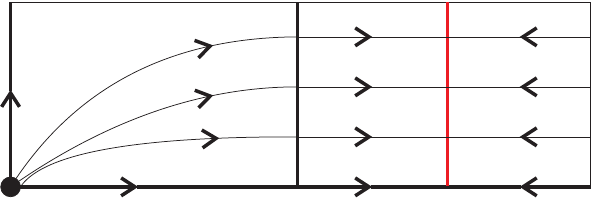}}\\
  \caption{Gluing two adjacent sectors. The impasse set is highlighted in red.}\label{fig-sector-glueing}
\end{figure}

We remark that each phase curve intersects $\partial U$ in only one point. Now, by the Pasting Lemma \ref{lemma-pasting-lemma}, there is an homeomorphism defined in both sectors that maps phase curves of $\widetilde{X}$ into phase curves of $\widetilde{Y}$, maps impasse set into impasse set and sends excepcional divisor into excepcional divisor.

Between each sector of the form $S_{i}^{\alpha}$ or $SI_{j}^{\alpha}$ it may have a sector of the form $F^{\alpha}$ or $FI^{\alpha}$, and we need to glue all these sectors around the excepcional divisor. At the boundary of each sector, the Pasting Lemma \ref{lemma-pasting-lemma} is applied.

At the end of the process, we must glue the last sector and the first sector in such a way that we ``close'' the construction of the homeomorphism, given that we are constructing such a homeomorphism around the exceptional divisor. Observe that, in general, such construction only can be closed if the basic point $p$ (or basic singular interval) that we took in the beginning of the proof is not of the form $V_{3}^{\alpha}$ or $C_{3}^{\alpha}$. Geometrically, it means that we must avoid the center-focus case.

Thus we have constructed an homeomorphism $H$ such that $H(D_{1}) = D_{2}$, $H$ maps phase curves of $\widetilde{X}$ into phase curves of $\widetilde{Y}$ and $H$ sends $\widetilde{\Delta}_{\delta}$ into $\widetilde{\Delta}_{\gamma}$. Then $H$ is the orbital $C^{0}$-equivalence desired.
\end{proof}

Combining the Propositions \ref{prop-c0-eq} and \ref{teo-eq-conjugacy} we obtain the Theorem \ref{coroo-eq-conjugacy}, which gives a condition to assure the existence of an (orientation preserving) orbital $C^{0}$-equivalence between two constrained differential systems, and such condition is based on the resolution of singularities. Since the classification of phase portraits depends on the notion of equivalence adopted, this result plays an important role in what follows.

\section{Constrained systems with singular impasse sets and the proof of Theorem \ref{teo-singular-curves-ade-resolution}}\label{sec-arnold}
\noindent

The main goal of this section is to give a classification of phase portraits of planar constrained systems near singular points of the impasse curve. We recall from the introduction that the set $\mathcal{A}$ is the set of constrained systems $(X, \delta)$ such that:
\begin{enumerate}
  \item The adjoint vector field is constant, that is, it satisfies $X(x,y) = (a,b)$, $a,b\in\mathbb{R}$ and $a^{2} + b^{2} \neq 0$.
  \item The function $\delta$ is one of the curves given by Arnold's ADE-classification (see Table \ref{tab-arnold-ade}).
\end{enumerate}

\begin{table}[ht]
\centering
\begin{tabular}{|l|c|c|}
  \hline
  Type & Normal Form & Codimension \\ \hline
  $A_{k}$, $k\geq 1$ & $x^{2}\pm y^{k+1}$ & $k-1$ \\
  $D_{k}$, $k\geq 4$ & $x^{2}y\pm y^{k-1}$ & $k-1$ \\
  $E_{6}$ & $x^{3} \pm y^{4}$ & $5$ \\
  $E_{7}$ & $y^{3} - yx^{3}$ & $6$ \\
  $E_{8}$ & $x^{3} + y^{5}$ & $7$ \\
  \hline
\end{tabular}
\caption{ADE-type singularities \cite{Arnold}.}\label{tab-arnold-ade}
\end{table}

We chose a coordinate system such that $\delta$ is one of the functions given by Table \ref{tab-arnold-ade}. Since the adjoint vector field and $\delta$ are analytic, and assuming that $X(0) \neq 0$, that is, the coefficients $a_{-1,0}$ and $b_{0,-1}$ in the expansion
\begin{equation}\label{eq-constrained-singular-curve-theorem}
\left\{
  \begin{array}{rcl}
    \delta(x,y)\dot{x} & = & a_{-1,0} + a_{0,0}x + a_{-1,1}y + ..., \\
    \delta(x,y)\dot{y} & = & b_{0,-1} + b_{0,0}y + b_{1,-1}x + ...,
  \end{array}
\right.
\end{equation}
satisfy $a_{-1,0}^{2} + b_{0,-1}^{2} \neq 0$, the Theorem \ref{teo-singular-curves-ade-resolution} gives conditions to assure when the System \eqref{eq-constrained-singular-curve-theorem} is orientation preserving $C^{0}$-orbitally equivalent to the system
\begin{equation}\label{eq-constrained-singular-curve-theorem-constante-case}
\left\{
  \begin{array}{rcl}
    \delta(x,y)\dot{x} & = & a_{-1,0}, \\
    \delta(x,y)\dot{y} & = & b_{0,-1},
  \end{array}
\right.
\end{equation}
which belongs to $\mathcal{A}$.

The proof of Theorem \ref{teo-singular-curves-ade-resolution} is given by straightforward computations. Indeed, one must compare the process of resolution of singularities of both \eqref{eq-constrained-singular-curve-theorem} and \eqref{eq-constrained-singular-curve-theorem-constante-case} in order to establish conditions to the coefficients of the Taylor expansion in such a way that, after a suitable finite sequence of weighted blow ups, the systems \eqref{eq-constrained-singular-curve-theorem} and \eqref{eq-constrained-singular-curve-theorem-constante-case} have the same elementary singularity scheme. System \eqref{eq-constrained-singular-curve-theorem} must satisfy one of the conditions presented in Table \ref{tab-conditions-teo-c}, where $n_{0}$ is the first positive integer such that $a_{-1,n_{0}}\neq0$. Geometrically, the conditions below mean that we must avoid tangencies between the adjoint vector field of \eqref{eq-constrained-singular-curve-theorem} and the components of the impasse set.

\begin{table}[ht]
\centering
\begin{tabular}{|c|l|}
\hline
Curve $\delta$           & \multicolumn{1}{c|}{Conditions}                                                                                                      \\ \hline
\multirow{3}{*}{$A_{k}$} & Case $1$: \ $k > 1$ and $a_{-1,0} \neq 0$                                                                                                        \\ \cline{2-2}
                         & Case $2$: \ $k = 1$ and $0 \neq a_{-1,0} \neq \pm b_{0,-1}$                                                                                      \\ \cline{2-2}
                         & Case $3$: \ $a_{-1,0} = 0$ and $k - 1 < 2n_{0}$                                                                                                  \\ \hline
\multirow{3}{*}{$D_{k}$} & Case $4$: \ $a_{-1,0} \neq 0$ and $b_{m_{0},-1} = 0$ for all integer $m_{0}\geq0$                                                                \\ \cline{2-2}
                         & Case $5$:  \ $k = 4$ and $0\neq a_{-1,0} \neq \pm b_{0,-1}$                                                                                       \\ \cline{2-2}
                         & Case $6$:  \ $a_{-1,0} = 0$ and $k - 4 < 2n_{0}$                                                                                                  \\ \hline
\multirow{2}{*}{$E_{7}$} & Case $7$:  \ $b_{0,-1} \neq 0$                                                                                                                    \\ \cline{2-2}
                         & Case $8$: \ $a_{-1,0} \neq 0$ and $b_{m_{0},-1} = 0$ for all integer $m_{0}\geq0$                                                                \\ \hline
$E_{6}$ or $E_{8}$       & \begin{tabular}[c]{@{}l@{}}without requiring further assumptions in the\\ Taylor expansion of the adjoint vector field.\end{tabular} \\ \hline
\end{tabular}
\caption{The constrained system \eqref{eq-constrained-singular-curve-theorem} must satisfy one of the conditions of this table in order to be equivalent to a system in $\mathcal{A}$. The number $n_{0}$ is the first positive integer such that $a_{-1,n_{0}}\neq0$.}\label{tab-conditions-teo-c}
\end{table}

In what follows we briefly describe the phase portraits of \eqref{eq-constrained-singular-curve-theorem-constante-case} near the exceptional divisor for each curve $\delta$ of Table \ref{tab-arnold-ade}. We start this section reviewing the construction of the Newton polygon and how we can define such mathematical object for planar constrained systems.

\subsection{The Newton polygon}
\noindent

The construction of the so called Newton polygon associated to a planar analytic vector field is well-known in the literature, and we refer to \cite{AlvarezFerragutJarque, DumortierLlibreArtes, Pelletier} for details. Afterwards we will see how to define such polygon for constrained systems.

Let $X$ be an analytic vector field. Just as in \cite{Panazzolo}, we write $X$ in the so called logarithmic basis. More precisely, consider
$$X(x,y) = P(x,y)\displaystyle\frac{\partial}{\partial x} + Q(x,y)\displaystyle\frac{\partial}{\partial y},$$
where
$$P(x,y) = \Big{(}\sum a_{m,n}x^{m}y^{n}\Big{)}x, \ Q(x,y) = \Big{(}\sum b_{m,n}x^{m}y^{n}\Big{)}y,$$
with $a_{m,n},b_{m,n}\in\mathbb{R}$ and $m,n\in \mathbb{Z}$ satisfying:
\begin{enumerate}
  \item For $m < -1$ or $n \leq -1$, $a_{m,n} = 0$;
  \item For $m \leq -1$ or $n < -1$, $b_{m,n} = 0$.
\end{enumerate}

Let $\omega = (\omega_{1},\omega_{2})$ be a vector of positive integers. One can write the planar vector field $X$ as
\begin{equation}\label{def-graduation}
X(x,y) = \displaystyle\sum_{d = -1}^{\infty}X_{d}^{(\omega_{1},\omega_{2})}(x,y),
\end{equation}
where
\begin{equation}\label{def-d-level-graduation}
X_{d}^{(\omega_{1},\omega_{2})}(x,y) = \displaystyle\sum_{\omega_{1} r + \omega_{2} s = d} x^{r}y^{s}\Big{(}a_{r,s}x\displaystyle\frac{\partial}{\partial x} +  b_{r,s}y\displaystyle\frac{\partial}{\partial y}\Big{)}.
\end{equation}

In other words, the vector field $X$ can be written as a sum of quasi-homogeneous components. We say that \eqref{def-graduation} is a \textbf{$(\omega_{1},\omega_{2})$-graduation of $X$}. Each $X_{d}^{(\omega_{1},\omega_{2})}$ is called \textbf{$d$-level of the $(\omega_{1},\omega_{2})$-graduation}.

Given a $(\omega_{1},\omega_{2})$-graduation of $X$, we associate the monomials $a_{r,s}x^{r}y^{s}$ and $b_{r,s}x^{r}y^{s}$ with nonzero coefficients to a point $(r,s)$ in the plane of powers, where each point $(r,s)$ is contained in a line of the form $\{\omega_{1} r + \omega_{2} s = d\}$.

\begin{definition}
The \textbf{support $\mathcal{Q}$ of $X$} is the set
$$\mathcal{Q} = \{(r,s)\in \mathbb{Z}^{2}: a_{r,s}^{2} + b_{r,s}^{2} \neq 0\}.$$
\end{definition}

\begin{definition}\label{def-newton-polygon}
The \textbf{Newton polygon $\mathcal{P}$} associated to the analytic vector field $X$ is the convex envelope of the set $\mathcal{Q} + \mathbb{R}^{2}_{+}$.
\end{definition}

The boundary $\partial\mathcal{P}$ of the Newton polygon $\mathcal{P}$ is the union of a finite number of segments. We enumerate them from the left to the right: $\gamma_{0}, \gamma_{1}, ..., \gamma_{n+1}$. Observe that $\gamma_{0}$ is vertical and $\gamma_{n+1}$ is horizontal. Analogously, the non-regular points of $\partial\mathcal{P}$ are enumerated from the left to the right: $v_{0}, ..., v_{n}$.

\begin{definition}
We say that $v_{0}, ..., v_{n}$ are the \textbf{vertices of the Newton polygon}. The vertex $v_{0} = (r_{0}, s_{0})$ is called \textbf{main vertex} and the segment $\gamma_{1}$ will be called \textbf{main segment}. The number $s_{0}$ is the \textbf{height of the Newton polygon}. See Figure \ref{fig-def-newton-polygon}.
\end{definition}

\begin{figure}[h]
  \center{\includegraphics[width=0.30\textwidth]{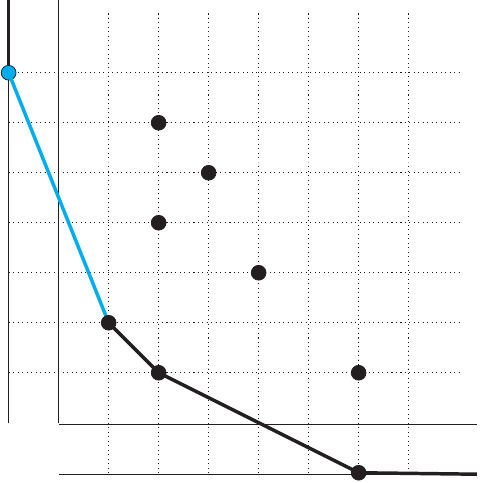}}\\
  \caption{Newton polygon of a vector field. The main vertex and main segment are highlighted in blue.}\label{fig-def-newton-polygon}
\end{figure}

The main segment $\gamma_{1}$ is contained in an affine line of the form $\{r\omega_{1} + s\omega_{2} = R\}$. Observe that the vector $\omega = (\omega_{1},\omega_{2})$ is normal with respect to $\{r\omega_{1} + s\omega_{2} = R\}$. In the resolution of singularities of analytic vector fields, one chooses the vector $\omega$ as the weight vector (see \cite{AlvarezFerragutJarque,Pelletier}). We remark that the Newton polygon strongly depends on the coordinate system adopted.

\subsection{The auxiliary vector field}
\noindent

The next goal is to relate a given constrained system to a Newton polygon. In order to achieve this objective, firstly we will define the so called auxiliary vector field. Recall that a diagonalized constrained system is written in the form
\begin{equation}\label{eq-impasse}
    \delta(x,y)\dot{x} = P(x,y), \ \delta(x,y)\dot{y} = Q(x,y),
\end{equation}
with $\delta (x,y)$ being an irreducible real-analytic function that can be written as
$$\delta(x,y) = \sum c_{k,l}x^{k}y^{l},$$
where $c_{k,l}\in\mathbb{Z}$ is such that $c_{k,l} = 0$ when $k < 0$ or $l<0$.

\begin{definition} The \textbf{auxiliary vector field} $X_{A}$ associated to the constrained differential equation \eqref{eq-impasse} is the vector field
\begin{equation}\label{def-aux-vec-field}
\scriptstyle X_{A}(x,y) = \delta(x,y)\Bigg{(}P(x,y)\frac{\partial}{\partial x} + Q(x,y)\frac{\partial}{\partial y}\Bigg{)} = \Bigg{(}\sum c_{k,l}x^{k}y^{l}\Bigg{)}\Bigg{(}\sum x^{m}y^{n}\big{(}a_{m,n}x\frac{\partial}{\partial x} + b_{m,n}y\frac{\partial}{\partial y}\big{)}\Bigg{)},
\end{equation}
which is real analytic.
\end{definition}

It is easy to sketch the phase portrait of the auxiliary vector field. Outside the impasse set, the auxiliary vector field \eqref{def-aux-vec-field} is obtained by multiplying the constrained differential system \eqref{eq-impasse} by the positive function $\delta^{2}$. On the other hand, the impasse set $\Delta$ is a curve of equilibrium points for \eqref{def-aux-vec-field}.

Since the auxiliary vector field is analytic, all previous definitions and remarks concerning the Newton polygon remain true for \eqref{def-aux-vec-field}. However, observe that the points on the plane of powers are of the form $(k+m, l + n)$, and therefore the support $\mathcal{Q}$ takes form
$$\mathcal{Q} = \{(k+m,l+n)\in \mathbb{Z}^{2}: c_{k,l}(a_{m,n}^{2} + b_{m,n}^{2}) \neq 0\}.$$

In other words, the support of the auxiliary vector field is obtained by the Minkowski Sum \cite{Schneider} between the points of the support of the adjoint vector field $X$ and the points of the support of $\delta$. Moreover, the levels of a $(\omega_{1},\omega_{2})$-graduation are written in the form
$$X_{A,d}^{(\omega_{1},\omega_{2})}= \Bigg{(} \displaystyle\sum_{\omega_{1} k + \omega_{2} l = d_{1}} c_{k,l}x^{k}y^{l}  \Bigg{)} \Bigg{(}   \displaystyle\sum_{\omega_{1} m + \omega_{2}n = d_{2}} x^{m}y^{n}\Big{(}a_{m,n}x\displaystyle\frac{\partial}{\partial x} +  b_{m,n}y\displaystyle\frac{\partial}{\partial y}\Big{)}\Bigg{)},$$
where $d_{1} + d_{2} = d$. Finally, we remark that the Newton polygon of the adjoint vector field and the auxiliary vector field are not necessarily the same.

The process of resolution of singularities of real analytic 2-dimensional constrained differential systems with weighted blow-ups was discussed in details in \cite{PerezSilva}, where one can also find examples. Such process can be summarized as follows. Given a constrained system, we write the system in its diagonalized form and then we define the auxiliary vector field $X_{A}$. The auxiliary vector field is an analytic vector field and it allows us to apply well known techniques of the literature, using the Newton polygon $\mathcal{P}_{X_{A}}$ to chose the weight vector of the blow up.

Now we are able to study constrained systems in the set $\mathcal{A}$.

\subsection{Curve $A_{k}$, $k\geq 1$}

\noindent

Firstly assume the conditions in the case 1. After a weighted blow up in the $x$ direction, the origin $0\in\mathbb{E}_{x}$ is a hyperbolic saddle and the impasse curve satisfies $1 \pm \tilde{y}^{k+1} = 0$. In the $y$ direction there are no equilibrium points in $\mathbb{E}_{y}$ and the impasse curve satisfies the equation $\tilde{x}^{2} \pm 1 = 0$. This case is equivalente to the case 2. See Figures \ref{fig-ak-blow-up-1-a} and \ref{fig-ak-blow-up-2-a}.

Now, consider the case 3. After a weighted blow-up in the $x$ direction, there are no equilibrium points in $\mathbb{E}_{x}$, and in the $y$ direction the origin $0\in\mathbb{E}_{y}$ is a hyperbolic saddle. The impasse set behaves just as the previous cases.

Geometrically, $n_{0}$ measures the tangency order between the adjoint vector field $X$ and the coordinate axis $\{x = 0\}$, and $k$ measures the tangency order between $\delta$ and $\{x = 0\}$. Therefore, the assumption $k-1 < 2n_{0}$ sets a relation between such tangency orders. It is important to remark that, when $k-1 \geq 2n_{0}$, the main segment of the Newton polygon of the auxiliary vector field of \eqref{eq-constrained-singular-curve-theorem} has the point $(1,n_{0})$, and such point does not appear in the Newton polygon of the auxiliary vector field of \eqref{eq-constrained-singular-curve-theorem-constante-case} See figures \ref{fig-ak-blow-up-1-b} and \ref{fig-ak-blow-up-2-b}.

\begin{figure}[h!]
  \center{\includegraphics[width=0.90\textwidth]{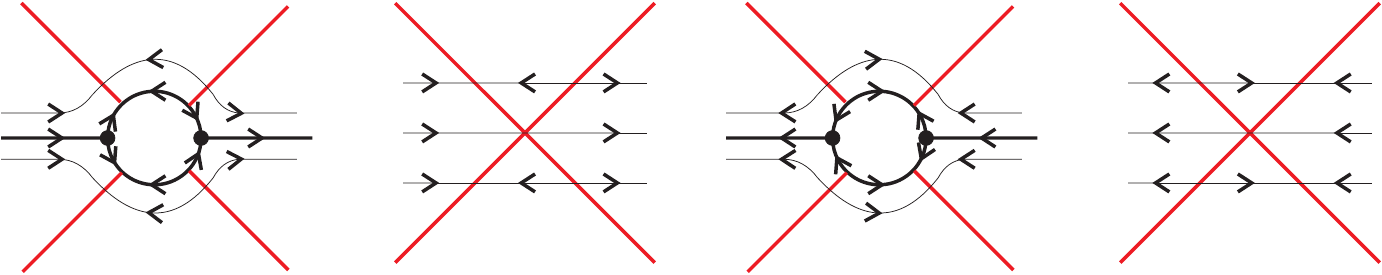}}\\
  \caption{Dynamics when $a_{-1,0}\neq0$, for $\delta$ $A_{k}$-type, $k$ odd.}\label{fig-ak-blow-up-1-a}
\end{figure}

\begin{figure}[h!]
  \center{\includegraphics[width=0.90\textwidth]{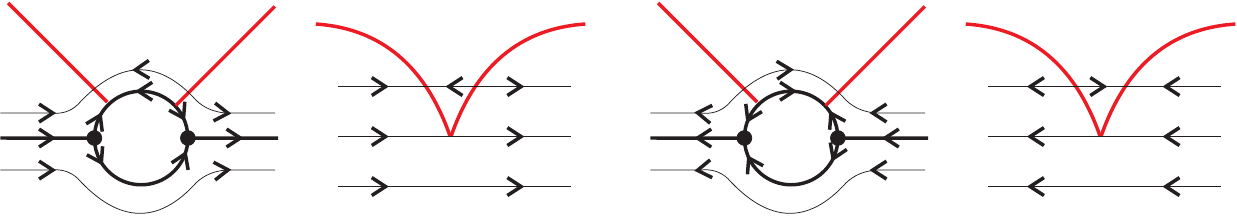}}\\
  \caption{Dynamics when $a_{-1,0}\neq0$, for $\delta$ $A_{k}$-type, $k$ even.}\label{fig-ak-blow-up-2-a}
\end{figure}

\begin{figure}[h!]
  \center{\includegraphics[width=0.90\textwidth]{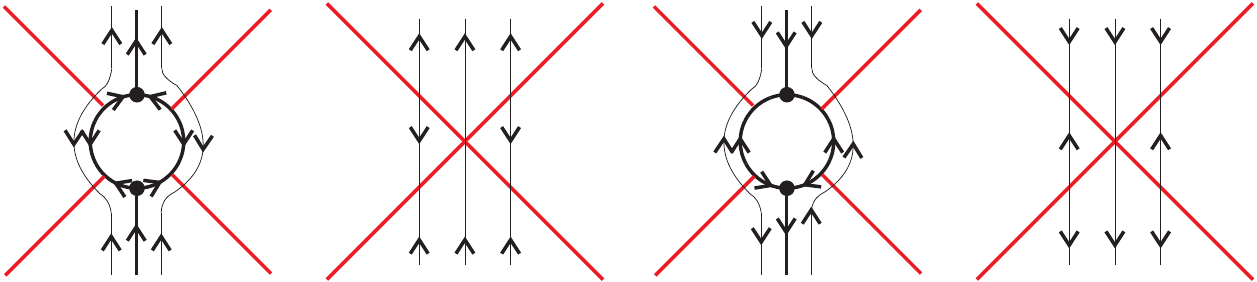}}\\
  \caption{Dynamics when $a_{-1,0} = 0$, for $\delta$ $A_{k}$-type, $k$ odd.}\label{fig-ak-blow-up-1-b}
\end{figure}

\begin{figure}[h!]
  \center{\includegraphics[width=0.90\textwidth]{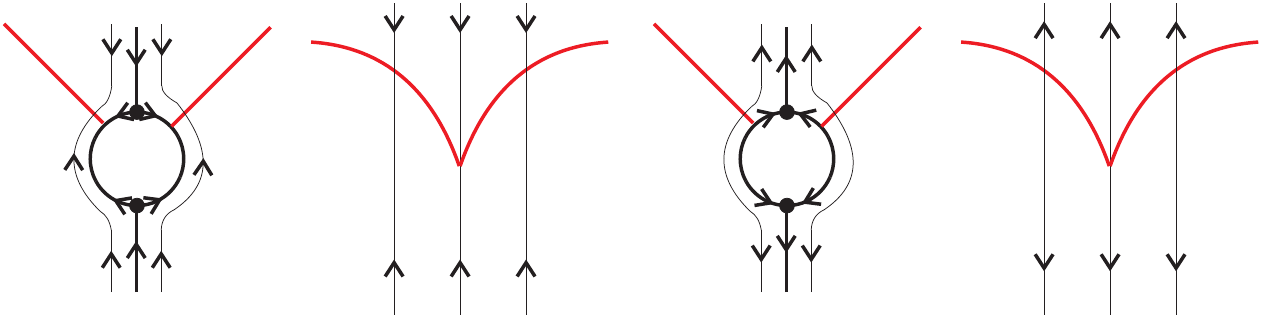}}\\
  \caption{Dynamics when $a_{-1,0} = 0$, for $\delta$ $A_{k}$-type, $k$ even.}\label{fig-ak-blow-up-2-b}
\end{figure}

\subsection{Curve $D_{k}$, $k\geq 4$}

\noindent

Assume the hypotheses in case 4. After a weighted blow up in the $x$ direction, the origin $0\in\mathbb{E}_{x}$ is a saddle point of the adjoint vector field $X$ and there is a separatrix that coincides with a component of the impasse set $\{\tilde{y} \pm \tilde{y}^{k-1} = 0\}$. In the $y$ direction, there are no equilibrium points in $\mathbb{E}_{y}$ and the impasse set is give by $\{\tilde{x}^{2} \pm 1 = 0\}$.

We emphasize that the condition $b_{m_{0},-1} = 0$ for all integer $m_{0}\geq 0$ avoids tangency points between the adjoint vector field and the component $\{y = 0\}$ of the impasse set, in which it would leads us to a different resolution of singularities from system \eqref{eq-constrained-singular-curve-theorem-constante-case}. See Figures \ref{fig-dk-blow-up-3-a} and \ref{fig-dk-blow-up-3-b}.

Now assume the hypotheses of the case 5. In the $y$ direction, there are no equilibrium points of $X$ in $\mathbb{E}_{y}$. On the other hand, in the $x$ direction the origin $0\in\mathbb{E}_{x}$ is a hyperbolic saddle of the adjoint vector field and an impasse point at the same time. Therefore, we must blow-up the origin once again. See Figures \ref{fig-dk-blow-up-1-a} and \ref{fig-dk-blow-up-2-a}

Finally, assume the hypotheses of the case 6. In the $x$ direction the impasse curve is given by $\tilde{y} \pm \tilde{y}^{k-1} = 0$ and there are no equilibrium points of the adjoint vector field in $\mathbb{E}_{x}$. On the other hand, after a weighted blow up in the $y$ direction the impasse curve is given by $\tilde{x}^{2} \pm 1 = 0$ and the origin $0\in\mathbb{E}_{y}$ is a hyperbolic saddle.

The number $n_{0}$ is related to the tangency order between the adjoint vector field $X$ and the axis $\{x = 0\}$, and $k$ is related to the tangency order between $\delta$ and the axis $\{x = 0\}$. Thus the assumption $k-4 < 2n_{0}$ sets a relation between such tangency orders. Observe that in the case $k-4 \geq 2n_{0}$, the main segment of the Newton polygon of the auxiliary vector field of \eqref{eq-constrained-singular-curve-theorem} has the point $(1,n_{0}+1)$, and such point obviously does not appear in the Newton polygon of the auxiliary vector field of \eqref{eq-constrained-singular-curve-theorem-constante-case}. See Figures \ref{fig-dk-blow-up-1-b} and \ref{fig-dk-blow-up-2-b}.

\begin{figure}[h!]
  \center{\includegraphics[width=0.90\textwidth]{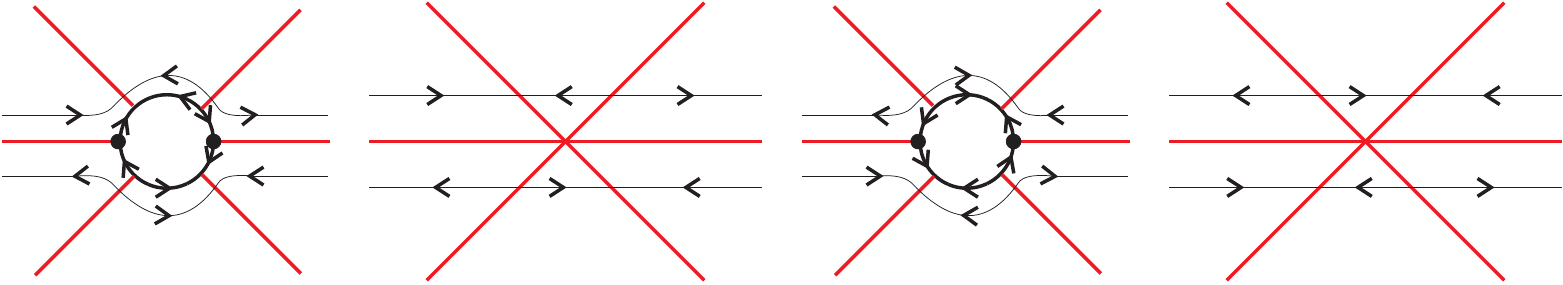}}\\
  \caption{Dynamics when $a_{-1,0} = 0$, for $\delta$ $D_{k}$-type, $k$ even.}\label{fig-dk-blow-up-3-a}
\end{figure}

\begin{figure}[h!]
  \center{\includegraphics[width=0.90\textwidth]{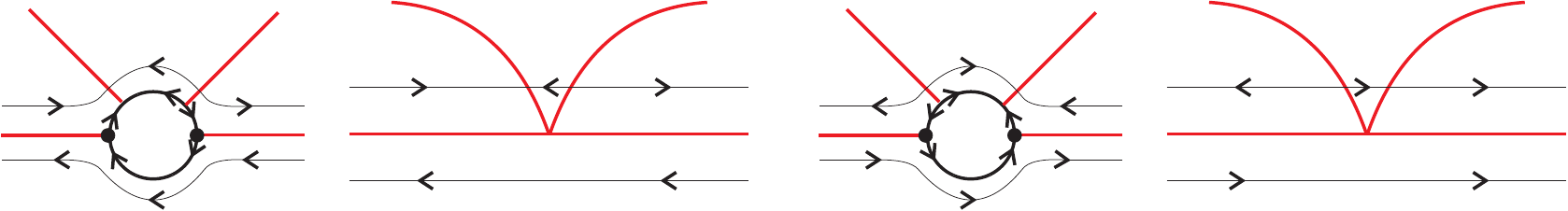}}\\
  \caption{Dynamics when $a_{-1,0} = 0$, for $\delta$ $D_{k}$-type, $k$ odd.}\label{fig-dk-blow-up-3-b}
\end{figure}

\begin{figure}[h!]
  \center{\includegraphics[width=0.90\textwidth]{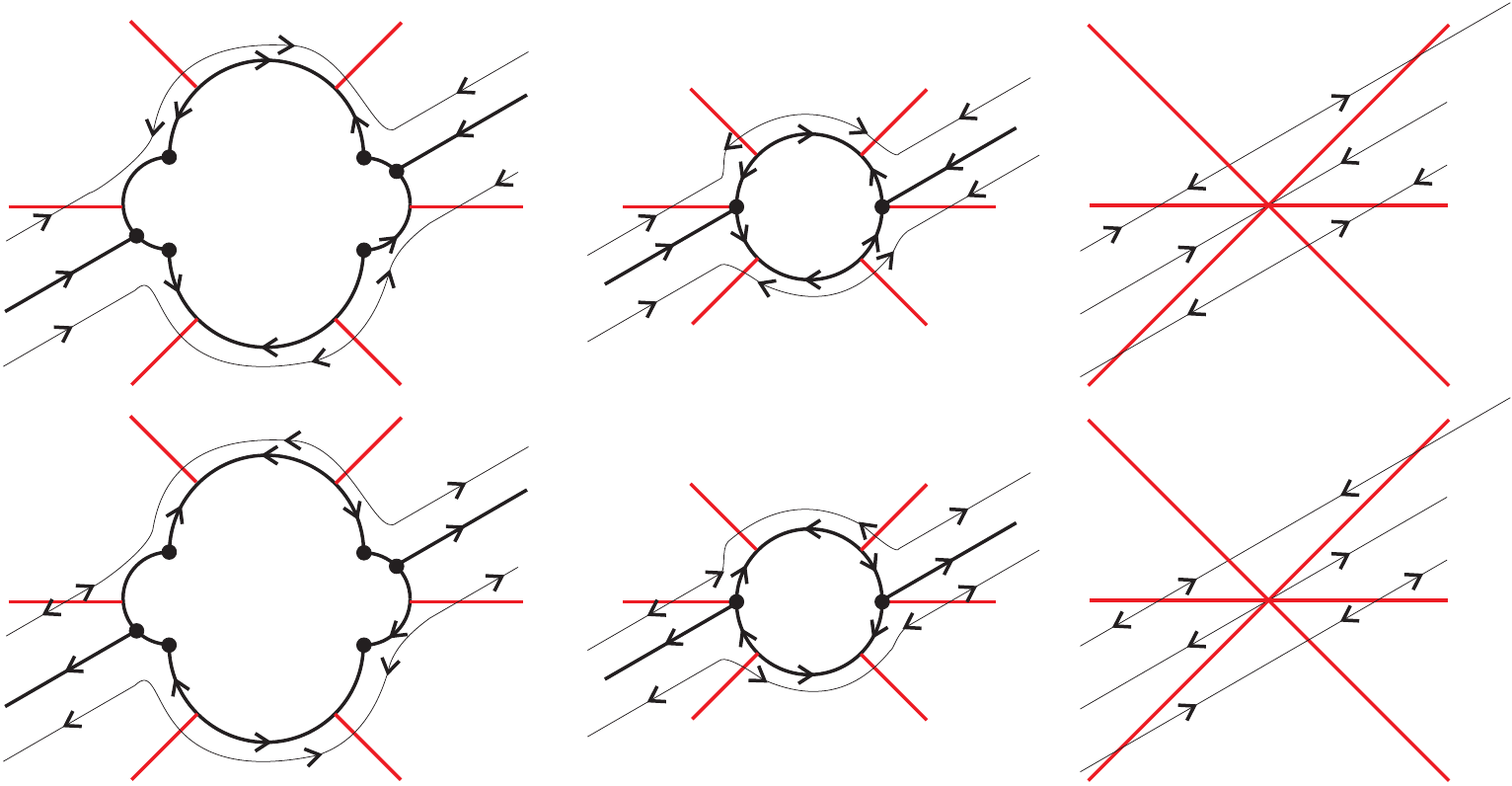}}\\
  \caption{Dynamics when $a_{-1,0} \neq 0$, for $\delta$ $D_{k}$-type, $k$ even.}\label{fig-dk-blow-up-1-a}
\end{figure}

\begin{figure}[h!]
  \center{\includegraphics[width=0.90\textwidth]{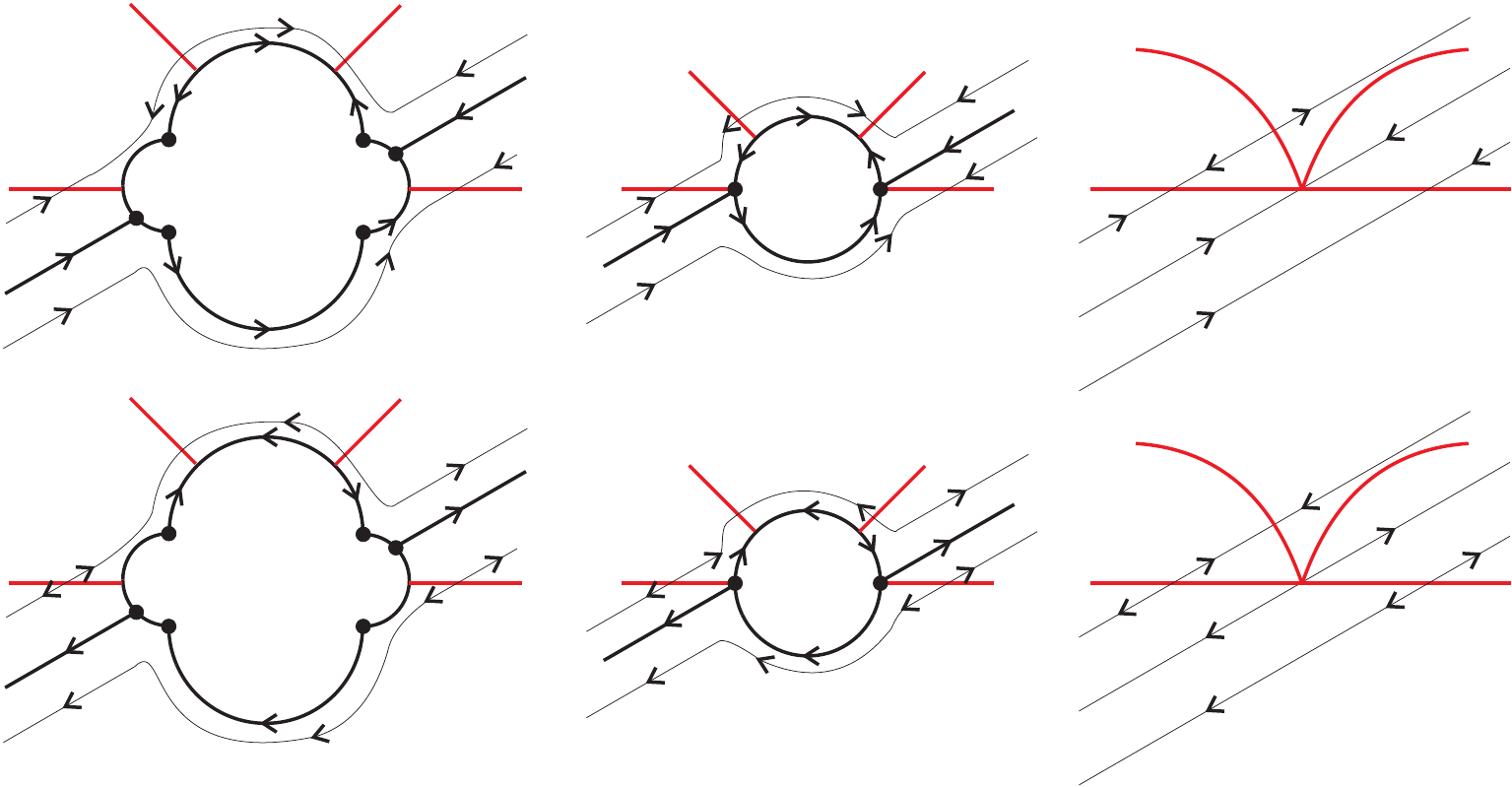}}\\
  \caption{Dynamics when $a_{-1,0} \neq 0$, for $\delta$ $D_{k}$-type, $k$ odd.}\label{fig-dk-blow-up-2-a}
\end{figure}

\begin{figure}[h!]
  \center{\includegraphics[width=0.90\textwidth]{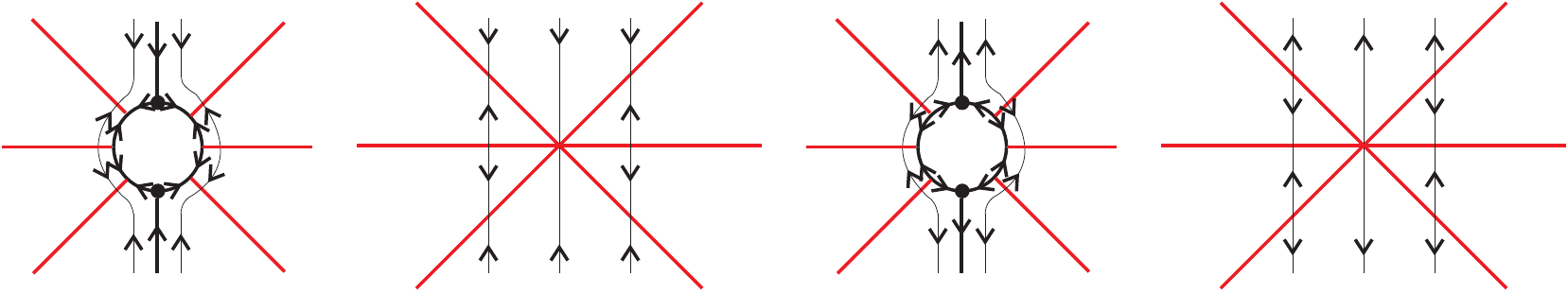}}\\
  \caption{Dynamics when $a_{-1,0} = 0$, for $\delta$ $D_{k}$-type, $k$ even.}\label{fig-dk-blow-up-1-b}
\end{figure}

\begin{figure}[h!]
  \center{\includegraphics[width=0.90\textwidth]{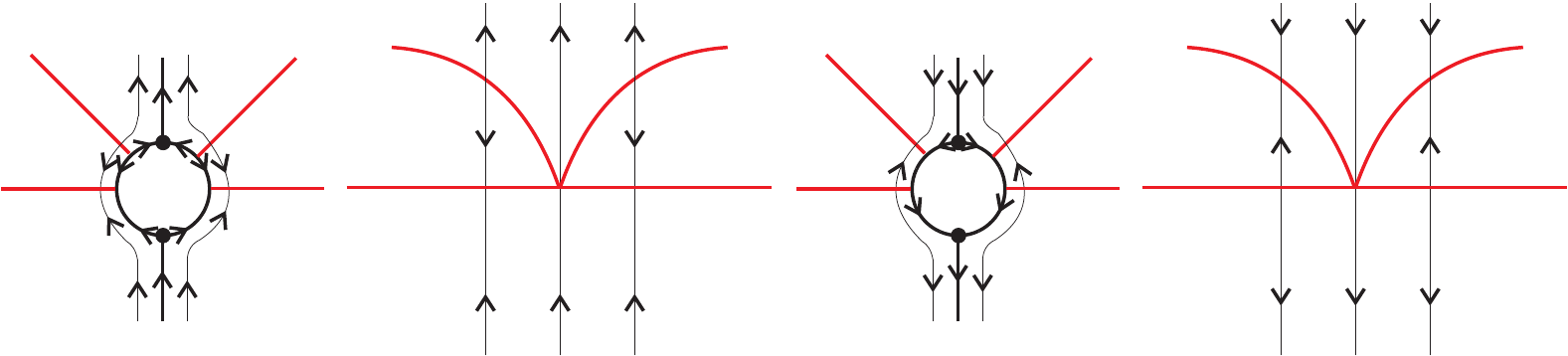}}\\
  \caption{Dynamics when $a_{-1,0} = 0$, for $\delta$ $D_{k}$-type, $k$ odd.}\label{fig-dk-blow-up-2-b}
\end{figure}

\subsection{Curve $E_{6}$}

\noindent

Suppose without loss of generality that $\delta = x^{3} - y^{4}$. In the positive $x$ direction the impasse curve intersects the exceptional divisor at the points $(0,\pm 1)$. For $a_{-1,0}\neq 0$ the origin $0\in\mathbb{E}_{x}$ is a hyperbolic saddle and if $a_{-1,0} = 0$ we do not have equilibrium points of the adjoint vector field on the divisor. In the positive $y$ direction, the impasse curve intersects the exceptional divisor at $(1,0)$. If $a_{-1,0}\neq 0$, there are no equilibrium points of the adjoint vector field in $\mathbb{E}_{y}$ and if $a_{-1,0} =  0$ the origin is a hyperbolic saddle. See Figures \ref{fig-e6-blow-up} and \ref{fig-e6-blow-up-2}.

\begin{figure}[h!]
  \center{\includegraphics[width=0.90\textwidth]{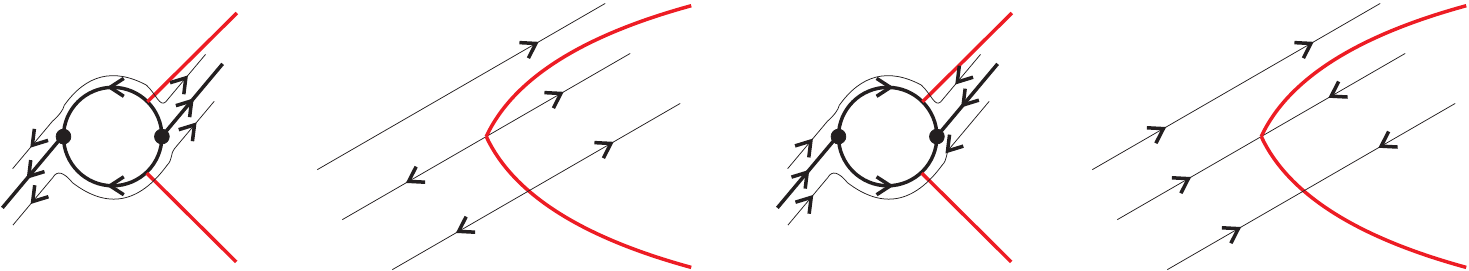}}\\
  \caption{Dynamics when $a_{-1,0}\neq0$, for $\delta$ $E_{6}$-type.}\label{fig-e6-blow-up}
\end{figure}

\begin{figure}[h!]
  \center{\includegraphics[width=0.80\textwidth]{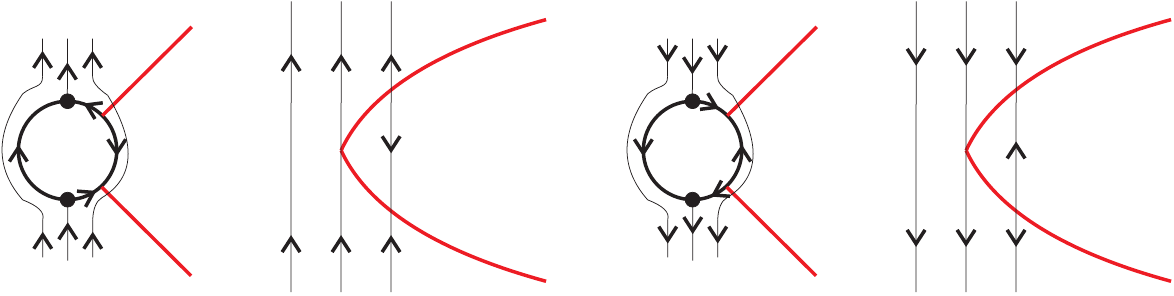}}\\
  \caption{Dynamics when $a_{-1,0} = 0$, for $\delta$ $E_{6}$-type.}\label{fig-e6-blow-up-2}
\end{figure}

\subsection{Curve $E_{7}$}

\noindent

Suppose that $a_{-1,0} \neq 0$ and $b_{m_{0},-1} = 0$, for all integer $m_{0}\geq0$. In the positive $x$ direction, the impasse curve intersects the $\mathbb{E}_{x}$ at the points $(0,-1)$, $(0,0)$ and $(0,1)$. Moreover, the origin is a saddle point of the adjoint vector field and there is a separatrix that coincides with a component of the impasse set. In the positive $y$ direction, there are no equilibrium points of $X$ in $\mathbb{E}_{y}$ and the impasse set intersects the exceptional divisor at $(1,0)$.

Once again we remark that the condition $b_{m_{0},-1} = 0$ avoids tangency points between the adjoint vector field $X$ and the component $\{y = 0\}$ of the impasse set. Such tangency points would lead us to a different resolution of singularities.

If we consider $b_{0,-1} \neq 0$, in the positive $x$ direction there are no equilibrium points of $X$ in $\mathbb{E}_{x}$, and in the positive $y$ direction the origin $0\in\mathbb{E}_{y}$ is a saddle points of $X$. The impasse curve behaves just as in the previous case.

See Figures \ref{fig-e7-blow-up-2} and \ref{fig-e7-blow-up}.

\begin{figure}[h!]
  \center{\includegraphics[width=0.95\textwidth]{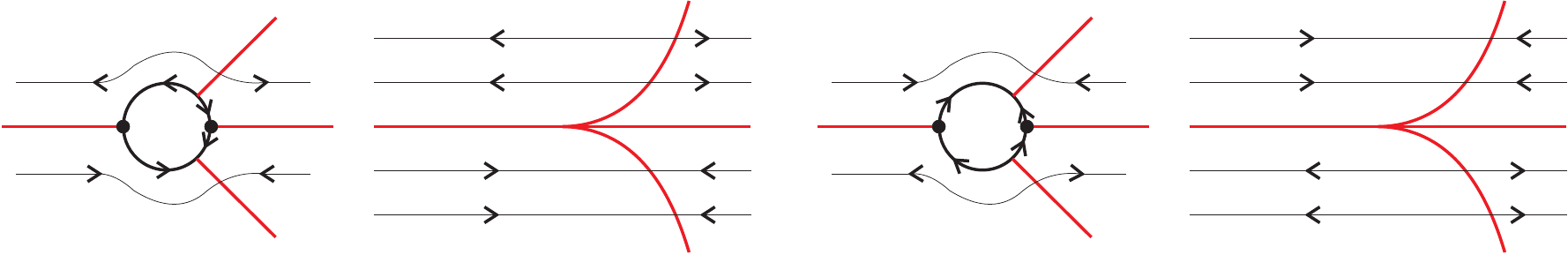}}\\
  \caption{Dynamics when $b_{m_{0},-1} = 0$ for all $m_{0}\geq 0$, where $\delta$ is $E_{7}$-type.}\label{fig-e7-blow-up-2}
\end{figure}

\begin{figure}[h!]
  \center{\includegraphics[width=0.95\textwidth]{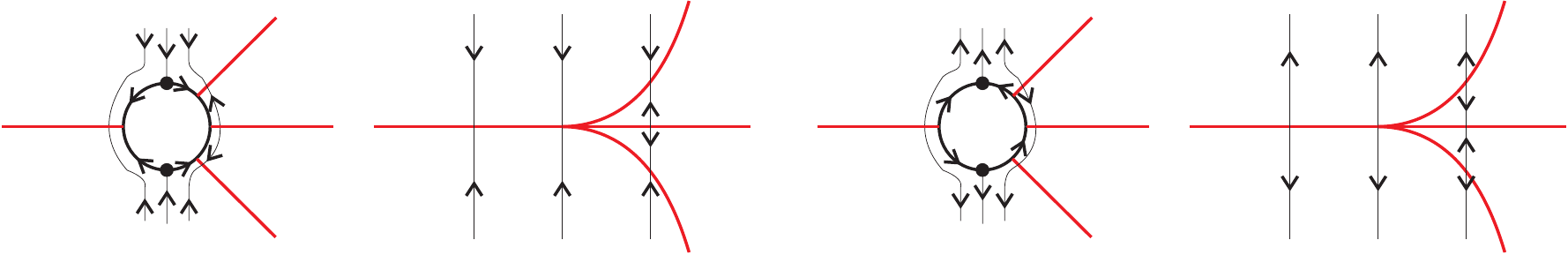}}\\
  \caption{Dynamics when $b_{0,-1}\neq0$, for $\delta$ $E_{7}$-type.}\label{fig-e7-blow-up}
\end{figure}

\subsection{Curve $E_{8}$}

\noindent

In the positive $x$ direction, the impasse curve intersects the exceptional divisor at $(0,-1)$. If $a_{-1,0}\neq 0$, then the origin $0\in\mathbb{E}_{x}$ is a hyperbolic saddle of the adjoint vector field and if $a_{-1,0} = 0$ there are no equilibrium equilibrium points of the adjoint vector field on the divisor. Finally, in the positive $y$ direction the impasse curve intersects the exceptional divisor $\mathbb{E}_{y}$ at $(-1,0)$. If $a_{-1,0}\neq 0$, there are no equilibrium points of the adjoint vector field in $\mathbb{E}_{y}$ and if $a_{-1,0} = 0$ the origin is a hyperbolic saddle of the adjoint vector field. See Figures \ref{fig-e8-blow-up} and \ref{fig-e8-blow-up-2}.

\begin{figure}[h!]
  \center{\includegraphics[width=0.90\textwidth]{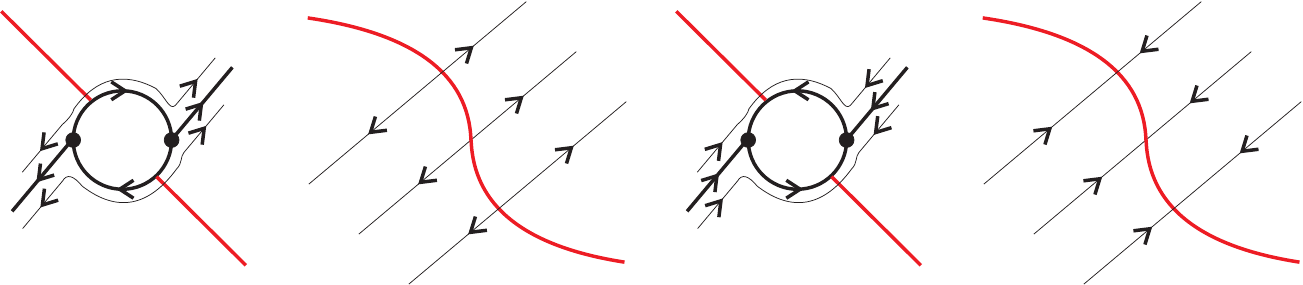}}\\
  \caption{Dynamics when $a_{-1,0}\neq0$, for $\delta$ $E_{8}$-type.}\label{fig-e8-blow-up}
\end{figure}

\begin{figure}[h!]
  \center{\includegraphics[width=0.90\textwidth]{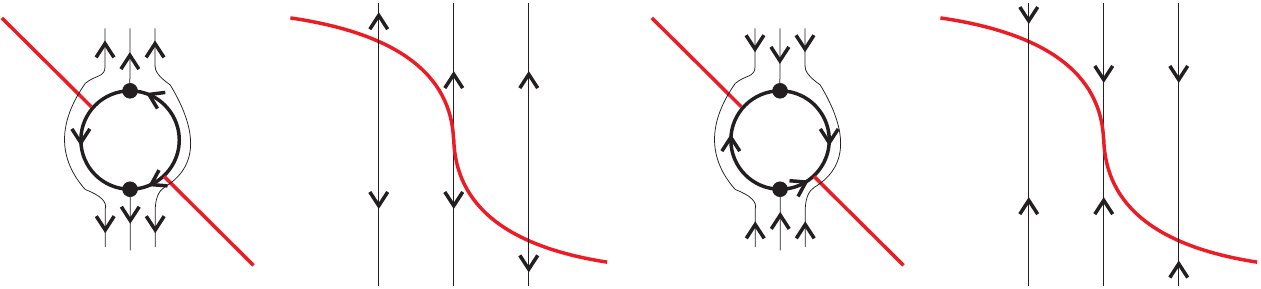}}\\
  \caption{Dynamics when $a_{-1,0} = 0$, for $\delta$ $E_{8}$-type.}\label{fig-e8-blow-up-2}
\end{figure}

\section{Topological determination of a constrained system with smooth impasse curve via Newton polygon}\label{sec-newton-pol}
\noindent

Firstly, let us recall a classical result of the literature. Let $X$ be a planar analytic vector field defined near the origin such that $X(0) = 0$, and denote its Newton polygon by $\mathcal{P}_{X}$ (see Definition \ref{def-newton-polygon} below). In \cite{BrunellaMiari} the authors proved that, under some non degeneracy conditions, the terms of $X$ associated to the the boundary $\partial\mathcal{P}_{X}$ of the Newton polygon determines the phase portrait of $X$ near the equilibrium point.

More precisely, given an analytic vector field $X$, the terms of $X$ associated to the points in $\partial\mathcal{P}_{X}$ define the so called principal part of $X$, which is denoted by $X_{\Omega}$. The Theorem A of \cite{BrunellaMiari} assures that $X$ and $X_{\Omega}$ are topologically equivalent.

Here we present a version of such result for analytic planar constrained differential systems near a non-singular point of the impasse curve. Our strategy is to work in a convenient coordinate system, in such a way that the classical result presented in \cite{BrunellaMiari} can be applied during the resolution of singularities.

During the resolution of singularities, one must apply a finite number of operations (weighted blow-ups) in order to obtain a ``simpler'' constrained system. This notion of ``simple'' comes from the notion of elementary constrained system (see Definition \ref{def-elementary-points}). In what follows we recall how to identify elementary points of the constrained system by means of the Newton polygon of the auxiliary vector field. We refer \cite{PerezSilva} for details.

\begin{definition}\label{def-desing1}
A planar constrained differential system is \textbf{Newton elementary at $p$} if the Newton polygon $\mathcal{P}$ associated to the auxiliary vector field $X_{A}$ satisfies one of the following:
\begin{itemize}
  \item The main vertex of $\mathcal{P}$ is $(0,0)$, $(0,-1)$ or $(-1,0)$ (that is, if the height is less or equal to zero);
  \item The main segment $\gamma_{1}$ is horizontal.
\end{itemize}
\end{definition}

\begin{theorem}\label{teo-eq-definitions} (See \cite{PerezSilva})
A planar constrained system is elementary at $p$ if, and only if, it is Newton elementary at $p$.
\end{theorem}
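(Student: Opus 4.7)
The plan is to work in coordinates adapted to the impasse set, translate each of the three cases of Definition \ref{def-elementary-points} into a statement about the support $\mathcal{Q}$ of the auxiliary vector field $X_A = \delta\cdot X$, and match each one against the two Newton elementary conditions. The key algebraic tool is the observation, already stated in the paper, that the support of $X_A$ is the Minkowski sum of the support of $\delta$ and the support of $X$ (in the logarithmic basis). Hence the lower--left corner of $\mathcal{P}_{X_A}$ is completely determined by the lowest monomials in $\delta$ and $X$, and the shape of the main segment reflects how these monomials interact.

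For the direction ``elementary $\Rightarrow$ Newton elementary'', I would split into the three cases. If $p$ is non-singular and $p\notin\Delta$, then $\delta(0)\neq 0$ and $X(0)\neq 0$, so $X_A(0)\neq 0$; this forces $(-1,0)$ or $(0,-1)$ into the support and these are necessarily the main vertex. If $p$ is non-singular and $p\in\Delta$, I would choose coordinates with $\Delta=\{x=0\}$, write $\delta=xu$ with $u(0)\neq 0$, and use the transversality of $X$ and $\Delta$ to get $a_{-1,0}\neq 0$; the Minkowski sum then places $(0,0)$ in the support of $X_A$, which becomes the main vertex. For case (2), since $\delta(0)\neq 0$ is a unit, $X_A$ is a nonvanishing multiple of $X$ near $p$, so $X_A$ is semi-hyperbolic iff $X$ is, and semi-hyperbolicity is equivalent to $(0,0)\in\mathcal{Q}_{X_A}$.

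The delicate case is (3). I would choose adapted coordinates making $\Delta=\{x=0\}$ the invariant separatrix, which forces $P(0,y)\equiv 0$ and hence $a_{-1,n}=0$ for all $n$; together with $X(0)=0$ this also gives $b_{0,-1}=0$. After a shear preserving $\{x=0\}$ to diagonalize the linear part, one of $a_{0,0},b_{0,0}$ is nonzero. Combined with $\delta=xu$, the Minkowski sum places $(1,0)$ in the support of $X_A$; the vanishing constraints on $a_{-1,n}$ and $b_{0,-1}$ rule out every support point with $r<1$ at height $s=0$ and every support point with $r<2$ at height $s=-1$. Thus the lower-left boundary of $\mathcal{P}_{X_A}$ begins at $(1,0)$ and proceeds horizontally, giving the second Newton elementary condition.

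For the converse I would argue by contrapositive. If $(X,\delta,p)$ is not elementary, then either $X(p)=0$ with $DX(p)$ nilpotent or zero, or $\Delta$ fails to coincide with a separatrix at an equilibrium on $\Delta$, or $\Delta$ is singular at $p$. In each case I would show that the vanishing of the relevant low-order coefficients of $X$ and/or the higher vanishing order of $\delta$ push every support point of $X_A$ above the line $s=0$ and to the right of $r=1$, with a main segment whose slope is strictly negative (because of mixed contributions from higher-order terms of $\delta$ and $X$), thereby violating both Newton elementary conditions. The main obstacle I anticipate is case (3): one must carefully verify that the horizontality of the main segment really captures the separatrix condition and is invariant under coordinate changes preserving $\Delta$, which is where the interplay between the invariance $a_{-1,n}=0$ and the Minkowski sum really has to be argued cleanly rather than just by inspection of the two leading monomials.
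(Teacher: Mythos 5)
The paper does not actually prove this statement: it is imported verbatim from \cite{PerezSilva}, so your proposal has to stand on its own. Your overall plan --- translate each clause of Definition \ref{def-elementary-points} into a condition on the support of $X_A$ via the Minkowski-sum structure $\mathcal{Q}_{X_A}=\mathcal{Q}_\delta+\mathcal{Q}_X$ --- is the right tool, and your treatment of the non-singular point on $\Delta$ is correct. But there are concrete gaps. In your case (3), the conclusion that the boundary of $\mathcal{P}_{X_A}$ ``begins at $(1,0)$ and proceeds horizontally'' is false in general: invariance of $\Delta=\{x=0\}$ kills the coefficients $a_{-1,n}$ and $b_{0,-1}$, but it does not kill $b_{m,-1}$ for $m\geq 1$. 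Take $X=x\frac{\partial}{\partial x}+(x-y)\frac{\partial}{\partial y}$, $\delta=x$: this is a hyperbolic saddle whose separatrix is $\Delta$, hence elementary, yet $\mathcal{Q}_{X_A}=\{(1,0),(2,-1)\}$, so the main vertex is $(1,0)$ and the main segment has slope $-1$. The case is rescued only by the parenthetical ``height $\leq 0$'' reading of Definition \ref{def-desing1}, not by the horizontality you invoke nor by the literal list of admissible main vertices; you must argue the main vertex lands at height $0$ and address which of the two Newton-elementary clauses it satisfies.

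Second, you repeatedly gloss over the coordinate dependence that the paper itself flags. In case (2) your asserted equivalence ``semi-hyperbolic $\Leftrightarrow (0,0)\in\mathcal{Q}_{X_A}$'' fails without first triangularizing the linear part: $X=y\frac{\partial}{\partial x}+x\frac{\partial}{\partial y}$ is hyperbolic but has support $\{(-1,1),(1,-1)\}$, main vertex $(-1,1)$ of height $1$ and non-horizontal main segment. Similarly, for a non-singular point off $\Delta$ with $a_{-1,0}=0$ but $b_{0,-1}\neq 0$, the main vertex can sit at $(-1,n_0)$ with $n_0>0$ (e.g.\ $X=y\frac{\partial}{\partial x}+\frac{\partial}{\partial y}$), so ``these are necessarily the main vertex'' needs a preliminary linear normalization that you never perform. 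Finally, your contrapositive for the converse enumerates only degenerate equilibria, non-separatrix equilibria on $\Delta$, and singular $\Delta$; it omits tangency points ($X(p)\neq 0$, $p\in\Delta$ regular, $d\delta(X)(p)=0$), which are non-elementary by Definition \ref{def-singularities}(3) and must also be shown to violate both Newton conditions. These are all repairable, but as written the proof does not close.
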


\subsection{Favorable coordinates}
\noindent

Since our study is local, for simplicity sake we denote a planar real analytic constrained differential system defined near the origin as $(X, \delta)$, where $X = \Big{(}P(x,y),Q(x,y)\Big{)}$ is the adjoint vector field and $\delta$ is a real irreducible analytic function. As usual, the auxiliary vector field will be denoted by $X_{A}$. The Newton polygons of the adjoint vector field and the auxiliary vector field will be denote by $\mathcal{P}_{X}$ and $\mathcal{P}_{X_{A}}$, respectively.

In this section, constrained differential systems with regular impasse set are considered. By ``regular impasse set'' we mean that the gradient vector $\nabla \delta$ is nonzero. The main goal is to extend the Theorem A of \cite{BrunellaMiari} to the context of constrained differential systems, that is, the objective is to show that the terms associated to the boundary $\partial\mathcal{P}_{X_{A}}$ of the Newton Polygon of the auxiliary vector field determine (under topological equivalence) the phase portrait near the singularity of the constrained system. In order to achieve this objetive we will introduce a convenient coordinate system.

\begin{definition}
The Newton polygon $\mathcal{P}$ associated to a vector field is \textbf{controllable} if the main vertex $v_{0}$ is contained in $\{0\}\times\mathbb{Z}$ or $\{-1\}\times\mathbb{Z}$.
\end{definition}

By Lemma 9 in \cite{PerezSilva}, we can always assume without loss of generality that the Newton polygon of an analytic vector field is controllable.

\begin{lemma}\label{lemma-change-coordinates-brunella}
Let $(X, \delta)$ be a constrained system such that the origin is an isolated equilibrium point and the impasse set is regular. Then there is a coordinate system such that:
\begin{itemize}
  \item The impasse set is given by $\Delta = \{y = 0\}$;
  \item The Newton polygons $\mathcal{P}_{X_{A}}$ and $\mathcal{P}_{X}$ of the auxiliary vector field $X_{A}$ and the adjoint vector field $X$, respectively, are controllable;
  \item The polygon $\mathcal{P}_{X_{A}}$ is obtained by increasing one unity on the second coordinate of each point of the polygon $\mathcal{P}_{X}$.
\end{itemize}
\end{lemma}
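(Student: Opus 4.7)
The plan is to handle the three properties in turn, each reducing to either the implicit function theorem or a short inspection of the support of $X$. I would begin with the first bullet: since $\nabla\delta(0)\neq 0$, the implicit function theorem produces an analytic local diffeomorphism fixing the origin that sends $\Delta$ to $\{y=0\}$. In these coordinates $\delta$ vanishes exactly on $\{y=0\}$, and because $\delta$ is irreducible analytic we may factor $\delta(x,y)=y\,u(x,y)$ with $u$ analytic and $u(0,0)\neq 0$.

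Next, for the third bullet, the key computation is at the level of the logarithmic basis: multiplication of any vector field by the monomial $y$ translates every point of its support by $(0,1)$, while multiplication by a unit leaves the Newton polygon unchanged. The latter point is the only subtle one. Since $u(0,0)\neq 0$, each extremal monomial of $X$ at a vertex $(m,n)$ of $\mathcal{P}_X$ cannot be canceled in $uX$: any SW-translate of $(m,n)$ lies outside $\mathcal{P}_X$ and hence outside the support of $X$, so the higher-order terms of $u$ contribute no term at position $(m,n)$. Conversely, all new monomials produced by those higher-order terms lie in $(\text{support of }X)+\mathbb{R}_{\geq 0}^{2}\subseteq\mathcal{P}_X$. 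Therefore
\[
\mathcal{P}_{X_A}=\mathcal{P}_{y\,uX}=\mathcal{P}_{uX}+(0,1)=\mathcal{P}_{X}+(0,1),
\]
which is exactly the third claim.

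For the second bullet, this translation identity immediately reduces controllability of $\mathcal{P}_{X_A}$ to controllability of $\mathcal{P}_X$, so it suffices to treat the latter. Writing the main vertex of $\mathcal{P}_X$ as $(r_0,s_0)$, the support constraints give $r_0\geq -1$, so I would rule out $r_0\geq 1$ by contradiction. Indeed, $r_0\geq 1$ forces $a_{-1,n}=a_{0,n}=0$ and $b_{0,n}=0$ for every admissible $n$, which in Taylor coefficients reads $p_{0,n}=p_{1,n}=q_{0,n}=0$ for all $n\geq 0$. This means $P=x^{2}\tilde{P}$ and $Q=x\,\tilde{Q}$ for analytic $\tilde{P},\tilde{Q}$, so $X$ vanishes identically on the line $\{x=0\}$, contradicting the hypothesis that the origin is an isolated zero of $X$. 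Hence $r_0\in\{-1,0\}$.

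The only genuinely delicate step is verifying that multiplication by the unit $u$ preserves the Newton polygon; once this is granted, the remainder of the argument is classical straightening of a regular analytic curve combined with an elementary bookkeeping of monomials, and no further coordinate change is required beyond the initial one.
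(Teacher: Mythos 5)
Your proof is correct, and it reaches all three bullets, but it takes a genuinely different route on the second one. The paper handles the first bullet exactly as you do (Implicit Function Theorem), and the third bullet via the observation that $\mathcal{P}_{X_A}$ is the Minkowski sum of the polygons of $\delta$ and $X$, with $\delta$ normalized to be $y$; your more careful version, which keeps $\delta = y\,u$ with $u(0,0)\neq 0$ and checks that multiplication by the unit $u$ cannot kill the coefficients at the vertices of $\mathcal{P}_X$ (no SW-translate of a vertex lies in the support) while all new terms fall inside $\mathcal{P}_X$, is a legitimate and in fact tidier justification of the same identity $\mathcal{P}_{X_A}=\mathcal{P}_X+(0,1)$. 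The real divergence is in the controllability claim: the paper obtains it by performing an additional shear $x=\bar{x}+\lambda\bar{y}$, $y=\bar{y}$ (invoking Lemma 9 of \cite{PerezSilva}), noting that this change preserves $\Delta=\{y=0\}$; you instead argue that no further coordinate change is needed because $r_0\geq 1$ would force $a_{m,n}=b_{m,n}=0$ for all $m\leq 0$, hence $P$ divisible by $x^2$ and $Q$ by $x$, so $X$ would vanish along $\{x=0\}$, contradicting the isolatedness of the equilibrium. That deduction is sound given the support constraints of the logarithmic basis, and it buys a self-contained proof that does not lean on the external lemma; what it costs is that it uses the isolated-equilibrium hypothesis in an essential way, whereas the shear argument of \cite{PerezSilva} achieves controllability without that hypothesis and is the device the authors reuse throughout the resolution algorithm (where transformed fields may vanish along divisors). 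Since the lemma as stated does assume the origin is an isolated equilibrium, your shortcut is admissible here.
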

\begin{proof}
The first item is true due to the Implicit Function Theorem. For the second item, we can apply the change of coordinates
$$x = \bar{x} + \lambda\bar{y}; \ y = \bar{y},$$
with $\lambda\in\mathbb{R}$ (see Lemma 9, \cite{PerezSilva}). Observe that after this change of coordinates, the impasse set is still of the form $\Delta = \{y = 0\}$. Recall that the Newton polygon $\mathcal{P}_{X_{A}}$ of the auxiliary vector field $X_{A}$ is obtained by the Minkowski sum between the Newton polygons of $\delta$ and $X$. Since in this coordinate system the function $\delta$ is given by $\delta = y$, it follows that the Newton polygon $\mathcal{P}_{X_{A}}$ of the auxiliary vector field $X_{A}$ is obtained by increasing one unity the second coordinate of each point of the Newton polygon $\mathcal{P}_{X}$ of $X$. See Figure \ref{fig-brunellamiari-newton-polygon-change}.
\end{proof}

\begin{figure}[h!]
  \center{\includegraphics[width=0.50\textwidth]{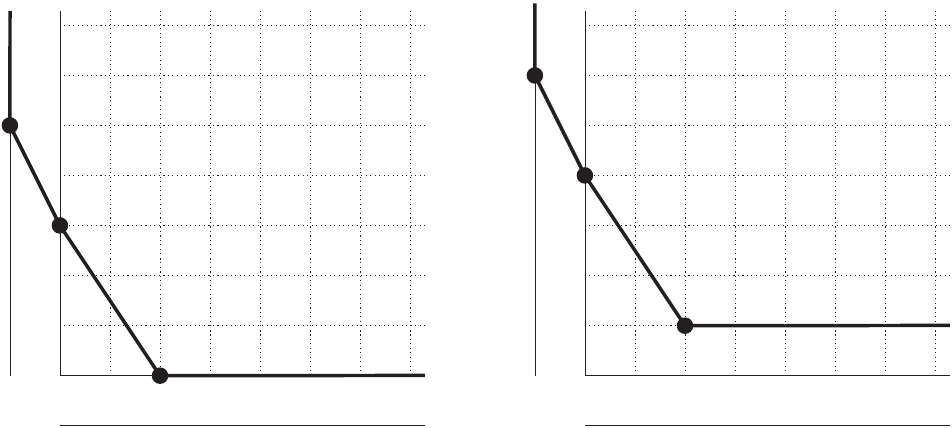}}\\
  \caption{Newton polygons $\mathcal{P}_{X}$ (left) and $\mathcal{P}_{X_{A}}$ (right).}\label{fig-brunellamiari-newton-polygon-change}
\end{figure}

\begin{definition}
The coordinate system given by Lemma \ref{lemma-change-coordinates-brunella} is called \textbf{favorable}.
\end{definition}

Due to Lemma \ref{lemma-change-coordinates-brunella}, without loss of generality we can always take favorable coordinates for $(X,\delta)$. Furthermore, by Lemma \ref{lemma-change-coordinates-brunella}, it follows that $\mathcal{P}_{X_{A}}$ and $\mathcal{P}_{X}$ have the same number of segments. In addition, given a segment $\gamma_{i}\subset\partial\mathcal{P}_{X_{A}}$, the respective segment $\gamma_{i}'\subset\partial\mathcal{P}_{X}$ has the same slope as $\gamma_{i}$. This implies that, if the main segment $\gamma_{1}\subset\partial\mathcal{P}_{X_{A}}$ is contained in a line of the form $\{r\omega_{1} + s\omega_{2} = R\}$, then the segment $\gamma_{1}'\subset\partial\mathcal{P}_{X}$ is contained in a line of the form $\{r\omega_{1} + s\omega_{2} = S\}$. Therefore, at each step of the resolution of singularities, the weights of the blow ups that we use in the desingularization of the constrained system and the vector field $X$ are the same.

Now we are ready to define the principal part of a planar constrained system with regular impasse set.

\begin{definition}
The \textbf{principal part of a 2-dimensional constrained system with regular impasse set} is a pair $(X_{\Omega},\delta)$ in favorable coordinates, where $X_{\Omega}$ is the principal part of the adjoint vector field $X$ in the sense of \cite{BrunellaMiari}.
\end{definition}

We recall from \cite{BrunellaMiari} that the principal part of a real analytic vector field with respect to a fixed system of coordinates is given by
$$X_{\Omega}(x,y) = \displaystyle\sum_{j = 1}^{n}X_{j}(x,y),$$
where $X_{j}$ is the vector field defined by the terms of $X$ that belong to the segment $\gamma_{j}\subset\partial\mathcal{P}_{X}$, where  $\gamma_{j}$ is neither vertical nor horizontal. \\

\noindent \textbf{Example.} Consider the diagonalized constrained differential system
\begin{equation}\label{eq-exe-principal-part}
y\dot{x} = y^{3} + x^{2}y, \ y\dot{y} = xy + x^{4};
\end{equation}
which is already written in favorable coordinates and whose auxiliary vector field is given by
\begin{equation}\label{eq-exe-principal-part-aux}
X_{A}(x,y) = y\big{(}y^{3} + x^{2}y\big{)}\displaystyle\frac{\partial}{\partial x} + y\big{(}xy + x^{4}\big{)}\displaystyle\frac{\partial}{\partial y}.
\end{equation}

The support of \eqref{eq-exe-principal-part-aux} is the set $\mathcal{Q} = \{(-1,4);(1,2);(4,0);(1,1)\}$, and the terms associated to the boundary $\partial\mathcal{P}_{X_{A}}$ of the Newton polygon of $X_{A}$ are $y\big{(}y^{3}\big{)}\displaystyle\frac{\partial}{\partial x}$, $y\big{(}xy\big{)}\displaystyle\frac{\partial}{\partial y}$ and $y\big{(}x^{4}\big{)}\displaystyle\frac{\partial}{\partial y}$. Thus the principal part of \eqref{eq-exe-principal-part} is
\begin{equation}
y\dot{x} = y^{3}, \ y\dot{y} = xy + x^{4}.
\end{equation}

\subsection{Non-degeneracy conditions}
\noindent

In this subsection we discuss some non-degeneracy conditions required for the pair $(X, \delta)$ in order to show the existence of an orbital $C^{0}$-equivalence between $(X,\delta)$ and $(X_{\Omega},\delta)$. As usual, we consider $(X,\delta)$ in favorable coordinates.

The first assumption is that the origin is an isolated equilibrium point of the adjoint vector field $X$. This implies that $\partial\mathcal{P}_{X}$ intersects the coordinate axes of the $rs$-plane. However, we remark that, in general, the Newton polygon $\partial\mathcal{P}_{X_{A}}$ of the auxiliary vector field only intersects the $s$-axis.

The next non-degeneracy condition was introduced in \cite{BrunellaMiari}.

\begin{definition}
Let $X$ be an analytic vector field such that $p$ is an isolated equilibrium point and let $X_{\Omega}$ be its principal part. We say that $X$ is \textbf{Newton non-degenerated at $p$} if any quasi homogeneous component $X_{j}$ of $X_{\Omega}$ associated to a side $\gamma_{j}\subset\partial\mathcal{P}_{X}$ does not have singularities in $(\mathbb{R}\backslash\{0\})^{2}$, that is, if each $X_{j}$ does not have singularities outside the coordinate axes.
\end{definition}

This non-degeneracy condition implies that, during the resolution of singularities of analytic vector fields, the only point in the exceptional divisor that has positive height is the origin in the $x$ direction. In other words, all the equilibrium points in $\Big{(}\mathbb{E}_{x}\backslash\{(0,0)\}\Big{)}\cup\mathbb{E}_{y}$ will always be elementary (see Proposition B, \cite{Pelletier}).

Moreover, such condition depends on the coordinate system adopted, that is, an analytic vector field can be Newton non-degenerated in a fixed coordinate system, but not be in other coordinate system (see \cite{Pelletier}). However, being Newton non-degenerated is a generic property in the set of all analytic vector fields (see Proposition 6, \cite{BrunellaMiari}), and therefore this is a generic condition in the set $\Gamma$ of all planar analytic constrained systems defined near the origin in favorable coordinates.

We end this subsection with the following remark. Suppose that $\mathcal{P}_{X}$ contains a point of the form $(M,-1)$. In other words, suppose that the component $Q$ of $X = (P,Q)$ has a term of the form $b_{M,-1}x^{M}$, with $b_{M,-1} \neq 0$. Geometrically, this condition means that any phase curve of $X$ does not coincide with the impasse curve $\Delta = \{y = 0\}$. By Lemma \ref{lemma-change-coordinates-brunella} it follows that $\mathcal{P}_{X_{A}}$ has a point of the form $(M,0)$. On the other hand, if $\mathcal{P}_{X}$ does not contain a point of the form $(M,-1)$, then the impasse set $\Delta$ coincides with a phase curve. By Lemma \ref{lemma-change-coordinates-brunella} it follows that $\mathcal{P}_{X_{A}}$ does not intersect the $r$-axis of the $rs$-plane.

\subsection{Proof of Theorem \ref{teo-brunellamiari-2}: The topological equivalence between a 2-dimensional constrained system and its principal part}
\noindent

Firstly, we consider the case where the boundary of the Newton polygon has just one side that is neither horizontal nor vertical, and afterwards we generalize such result. From now on, the pair $(X,\delta)$ satisfies the hypotheses discussed in the previous subsection.

\begin{lemma}\label{lemma-case-2-brunella}
Let $(X,\delta)$ be a 2-dimensional constrained system and take favorable coordinates. Suppose that
\begin{enumerate}
  \item The vector field $X$ is Newton non-degenerated at the origin;
  \item $X_{\Omega}$ = $X_{1}$, that is, the boundary $\partial\mathcal{P}_{X}$ of the polygon of $X$ has just one segment that is neither vertical nor horizontal.
\end{enumerate}
Then $(X_{\Omega},\delta)$ and $(X,\delta)$ have the same elementary singularity scheme.
\end{lemma}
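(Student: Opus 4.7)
The plan is to apply a single weighted blow-up dictated by the main segment $\gamma_1$, verify that the transformed systems agree on the exceptional divisor up to terms vanishing on it, and then iterate at the few corner singularities that can survive.

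First, in favorable coordinates $\delta = y$. The main segment $\gamma_1 \subset \partial \mathcal{P}_X$ lies on a line $\{r\omega_1 + s\omega_2 = R\}$ for coprime positive integers $(\omega_1,\omega_2)$. By Lemma \ref{lemma-change-coordinates-brunella}, the main segment of the auxiliary polygon $\mathcal{P}_{X_A}$ is determined by the same weights, so the resolution of $(X,\delta)$ and of $(X_\Omega,\delta)$ both begin with the $(\omega_1,\omega_2)$-blow-up. Hypothesis (2) forces $X_\Omega = X_1$ to be exactly the sum of monomials of $X$ supported on $\gamma_1$; hence I can decompose $X = X_\Omega + R$ where every monomial of $R$ satisfies $r\omega_1 + s\omega_2 > R$, i.e.\ lies strictly above $\gamma_1$.

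Next, I carry out the blow-up in each directional chart. In the positive $x$-chart, after the substitution $(x,y)=(\tilde{x}^{\omega_1},\tilde{x}^{\omega_2}\tilde{y})$ and division by the maximal power of $\tilde{x}$ making the pulled-back vector field analytic, I obtain $\widetilde{X} = \widetilde{X_\Omega} + \tilde{x}\,\widetilde{R}$ with $\widetilde{R}$ analytic: monomials of $R$ contribute strictly higher powers of $\tilde{x}$ than those of $X_\Omega$, so the remainder vanishes on the divisor $\mathbb{E}_x=\{\tilde{x}=0\}$. The analogous identity holds in the $y$-chart. Since $\delta = y$ transforms identically for both systems, the strict transform of the impasse set is the same. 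Thus $(\widetilde{X},\widetilde{\delta})$ and $(\widetilde{X_\Omega},\widetilde{\delta})$ coincide on a neighbourhood of the exceptional divisor modulo terms vanishing on it.

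Finally, Newton non-degeneracy of $X$ prevents $X_\Omega$ from having equilibria off the coordinate axes, so the restriction of $\widetilde{X_\Omega}$ to $\mathbb{E}_x$ can vanish only at the two corners, where the divisor meets the strict transforms of $\{x=0\}$ and $\{y=0\}$. Off the corners, the basic singularities and basic singular intervals contributed by both systems agree by the previous paragraph. At each corner I iterate: the local Newton polygon of the transformed auxiliary vector field depends only on terms on $\gamma_1$ (those off $\gamma_1$ carry an extra factor of the divisor coordinate), so the weights and principal part for the next blow-up are again common to $X$ and $X_\Omega$. By Theorem \ref{teo-resolution-singularities} the recursion terminates in finitely many steps, so the final arrangements of basic pieces on the divisor are identical, yielding the same elementary singularity scheme. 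The main obstacle is precisely this induction: one must check that favorable/controllable coordinates can be restored (via Lemma \ref{lemma-change-coordinates-brunella} and \cite{PerezSilva}) and that the Newton non-degeneracy passes to each newly created corner, so that the argument applies verbatim at the next stage rather than degenerating into a center-focus ambiguity.
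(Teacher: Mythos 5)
Your first two paragraphs track the paper's argument: the same decomposition $X = X_\Omega + R$ with $R$ supported strictly above $\gamma_1$, the same choice of weights from the main segment (common to $\mathcal{P}_X$ and $\mathcal{P}_{X_A}$ by Lemma \ref{lemma-change-coordinates-brunella}), and the observation that after dividing by the maximal power of the divisor coordinate the remainder vanishes on the exceptional divisor, so the two strict transforms have the same equilibria there; the $y$-chart reduces to the classical (impasse-free) case. Up to that point you are reproducing the paper's proof.

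The gap is in your last paragraph. Under hypothesis (2) no iteration at the corners is needed, and your proof is incomplete precisely because you replace the missing verification by an induction whose key step you explicitly leave open. The point of the single-segment hypothesis is that \emph{one} weighted blow-up already makes every point of the divisor elementary, including the corner where $\mathbb{E}_x$ meets the strict transform of $\Delta=\{y=0\}$: writing out the first graduation of the transformed auxiliary vector field, its Newton polygon at that corner either has a horizontal main segment (when $\Delta$ coincides with a separatrix) or has height $\leq 0$ (when $\mathcal{P}_X$ contains a point $(M,-1)$), so the corner is Newton elementary by Definition \ref{def-desing1} and Theorem \ref{teo-eq-definitions}. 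You never check this; instead you propose to blow up again at the corners and admit that ``the main obstacle'' is showing that favorable coordinates can be restored and that Newton non-degeneracy passes to the new corner. That propagation is exactly the non-trivial content you would owe, and it is not automatic --- the paper itself stresses that Newton non-degeneracy is coordinate-dependent. Appealing to Theorem \ref{teo-resolution-singularities} only gives termination of \emph{some} resolution; it does not give that $X$ and $X_\Omega$ keep producing the same configuration at each subsequent stage, which is the whole assertion to be proved. (The corner-by-corner induction is the right skeleton for the general Proposition \ref{teo-brunellamiari}, but even there the paper argues via the shape of the transformed polygon rather than by re-establishing non-degeneracy.) To close the lemma as stated, replace the induction by the direct computation of the transformed polygon at the origin of the $x$-chart.
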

\begin{proof}
We will study the case where the main vertex is of the form $(0,N)$. The case where the main vertex is $(-1,N)$ is analogous. The proof is based in directional weighted blow ups and we focus only in the positive $x$ and $y$ directions, provided that the computations are similar in the negative directions.

Since we are adopting favorable coordinates, the impasse curve $\{y = 0\}$ only appears in the $x$ direction. This means that, in the $y$ direction, there are no impasse points in the exceptional divisor after a weighted blow-up, and therefore we are in the classical case of analytic vector fields. It follows that in the $y$ direction both pairs $(X_{\Omega},\delta)$ and $(X,\delta)$ have the same equilibrium points, and the arrangement of such points is the same for both pairs. We emphasize that such equilibrium points in $\mathbb{E}_{y}$ are semi-hyperbolic (and therefore elementary), because the constrained system is Newton non-degenerated.

After a blow up in the $x$ direction, the first graduation of the auxiliary vector field takes the form
$$\widetilde{X}_{A} = \tilde{y}\Bigg{(}   \displaystyle\sum_{n = -1}^{N}\tilde{y}^{n}\Big{(}\frac{a_{m,n}}{\omega_{1}}\tilde{x}\displaystyle\frac{\partial}{\partial \tilde{x}} +  (b_{m,n} - \frac{\omega_{2}}{\omega_{1}}a_{m,n})\tilde{y}\displaystyle\frac{\partial}{\partial \tilde{y}}\Big{)}\Bigg{)}.$$

Observe that the impasse curve is still of the form $\{\tilde{y} = 0\}$ and the equilibrium points of the strict transformed $\widetilde{X}_{\Omega}$ and $\widetilde{X}$ are the same and they are semi-hyperbolic. We remark that this happens independently if the impasse set coincides or does not coincide with a separatrix of an equilibrium point. The difference is that, in the first case, the main segment of the polygon after the blow-up is horizontal, and in the second case, the polygon after the blow-up does not have positive height.

It follows that the pairs $(X_{\Omega},\delta)$ and $(X,\delta)$ have the same elementary singularity scheme.
\end{proof}

It is interesting to remark that in Lemma \ref{lemma-case-2-brunella}, the process of resolution of singularities essentially desingularizes the vector field, since the impasse set is already ``simple''. This idea will be important in the proof of the next proposition.

\begin{proposition}\label{teo-brunellamiari}
Let $(X,\delta)$ be a 2-dimensional constrained system written in favorable coordinates. If the origin is Newton non-degenerated, then the pairs $(X_{\Omega},\delta)$ and $(X,\delta)$ have the same elementary singularity scheme.
\end{proposition}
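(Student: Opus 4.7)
The plan is to argue by induction on the number $n$ of segments of $\partial\mathcal{P}_{X}$ that are neither vertical nor horizontal, which by Lemma \ref{lemma-change-coordinates-brunella} equals the number of such segments in $\partial\mathcal{P}_{X_{A}}$. The base case $n=1$ is exactly Lemma \ref{lemma-case-2-brunella}. For the inductive step I assume the conclusion for all Newton non-degenerated constrained systems in favorable coordinates whose polygon has strictly fewer than $n$ oblique sides, and take $(X,\delta)$ with $n\geq 2$ oblique sides.

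First I perform a weighted blow-up with weight $(\omega_{1},\omega_{2})$ equal to the primitive integer vector normal to the main segment $\gamma_{1}\subset\partial\mathcal{P}_{X}$. Since the coordinates are favorable, the impasse curve $\{y=0\}$ appears only in the positive and negative $x$-charts; in the $y$-charts there is no impasse point on the exceptional divisor, so $(X,\delta)$ and $(X_{\Omega},\delta)$ reduce to ordinary analytic vector fields there. Arguing exactly as in Lemma \ref{lemma-case-2-brunella}, the first graduation of $\widetilde{X}_{A}$ is determined entirely by the monomials supported on $\gamma_{1}$, which are the same for $X$ and $X_{\Omega}$ by definition of principal part; hence the equilibria on $\mathbb{E}_{y}$ are identical in position, number, and linear type for both systems, and are semi-hyperbolic by the Newton non-degeneracy hypothesis.

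In the $x$-charts the impasse set persists as $\{\tilde y=0\}$. On $\mathbb{E}_{x}\setminus\{(0,0)\}$ the first graduation again depends only on the terms on $\gamma_{1}$, so the two systems share the same elementary (semi-hyperbolic) singularities on this open piece of the divisor. The only point on the new divisor where the strict transformed system can fail to be elementary is the corner at the origin of the $x$-chart. The key structural observation, standard in the Newton polygon desingularization of \cite{BrunellaMiari,Pelletier}, is that the Newton polygon of the strict transformed vector field at this corner is obtained from $\mathcal{P}_{X}$ by truncating $\gamma_{1}$: its oblique sides are (affine transforms of) $\gamma_{2},\ldots,\gamma_{n}$, so it has $n-1$ oblique sides. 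Since $\delta$ pulls back to $\tilde y$ up to a unit, the coordinates remain favorable at this corner, and Newton non-degeneracy descends side by side because each component $X_{j}$ of $X_{\Omega}$ with $j\geq 2$ is unchanged by the pullback on its own line. Hence the inductive hypothesis applies to the strict transformed pair at this corner and gives the same elementary singularity scheme for $(\widetilde{X},\widetilde{\delta})$ and $(\widetilde{X}_{\Omega},\widetilde{\delta})$ in a neighborhood of it.

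Finally, concatenating the contributions from all charts around the exceptional divisor of the first blow-up yields the same cyclic word of basic points and basic singular intervals for $(X,\delta)$ and $(X_{\Omega},\delta)$, hence the same elementary singularity scheme. The main obstacle I anticipate is the bookkeeping in the inductive step: one must verify that (i) the favorable coordinate structure is genuinely preserved at the remaining corner, so that no further normalizing change of variables is needed that could reshape the polygon, and (ii) the Newton non-degeneracy of the component $X_{j}$ for $j\geq 2$ transfers correctly to the non-degeneracy of the corresponding quasi-homogeneous component of the strict transformed system. Once these are established, Theorem \ref{coroo-eq-conjugacy} together with the preceding propositions combines with the present statement to yield Theorem \ref{teo-brunellamiari-2}.
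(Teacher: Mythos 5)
Your overall strategy is the same as the paper's: compare the two resolutions blow-up by blow-up, note that the impasse curve survives as $\{\tilde y=0\}$ only in the $x$-charts so that everything away from the corner reduces to the classical Brunella--Miari/Pelletier analysis, and track the Newton polygon at the corner. The paper runs the iteration directly and finishes with a three-case analysis at the terminal corner, whereas you package the iteration as an induction on the number of oblique sides; that is a presentational difference, not a different route.

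There are, however, two places where your inductive step has a real hole. First, the inductive hypothesis is a statement comparing a constrained system with \emph{its own} principal part, but what you need at the corner is a comparison between $\widetilde{X}$ (the strict transform of $X$) and $\widetilde{X_{\Omega}}$ (the strict transform of the \emph{original} principal part); these are not an instance of ``a system and its principal part''. The strict transform of $X_{\Omega}$ carries extra monomials coming from $\gamma_{1}$ that land on the vertical side of the new polygon, so $\widetilde{X_{\Omega}}\neq(\widetilde{X})_{\Omega}$ in general. To close the step you must apply the inductive hypothesis twice --- once to $(\widetilde{X},\widetilde{\delta})$ and once to $(\widetilde{X_{\Omega}},\widetilde{\delta})$ --- and then observe that $(\widetilde{X})_{\Omega}=(\widetilde{X_{\Omega}})_{\Omega}$ because the oblique sides of the two transformed polygons coincide and carry the same monomials (all of which already belong to $X_{\Omega}$); transitivity of ``same elementary singularity scheme'' then finishes the argument. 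Second, your recursion bottoms out at Lemma \ref{lemma-case-2-brunella} without checking that the situation actually reached there is covered by it. The delicate terminal configuration --- the paper's Case 2 --- is the one where, after all but the last oblique side have been consumed, the corner's main vertex is $(0,0)$: the adjoint vector field is already semi-hyperbolic (hence elementary as a vector field), but the point lies on $\Delta$ and $\Delta$ is not a separatrix there, so the \emph{constrained} system is not elementary and one further blow-up is required, whose weight must be seen to agree for both systems via the auxiliary polygons. This is exactly where the constrained-system notion of elementarity diverges from the classical one, i.e.\ where the proposition is more than Brunella--Miari applied to $X$; your write-up never isolates this case, and you should verify explicitly that the hypotheses of Lemma \ref{lemma-case-2-brunella} (isolated equilibrium, Newton non-degeneracy, favorable coordinates) hold at that corner so the base case legitimately absorbs it.
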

\begin{proof}

Once again we consider weighted directional blow ups and study the case where the main vertex is of the form $(0,N)$. The case where the main vertex is $(-1,N)$ is analogous. We will compare the process of resolution of singularities of $(X_{\Omega},\delta)$ and $(X,\delta)$. Since the segments of the Newton polygons $\mathcal{P}_{X_{A}}$ and $\mathcal{P}_{X_{\Omega, A}}$ have the same slope, at each step of the process we apply blow-ups with same weights for both constrained systems. The case were the boundary of the Newton polygon has just one segment that is neither vertical nor horizontal was treated in Lemma \ref{lemma-case-2-brunella}, so we will give a proof for the general case.

Just as in Lemma \ref{lemma-case-2-brunella}, the impasse set $\{y = 0\}$ only appears in the $x$ direction. This means that in all points of the divisor $\mathbb{E}_{y}$, both pairs $(X_{\Omega},\delta)$ and $(X,\delta)$ have the same equilibrium points, all of them are semi-hyperbolic and the arrangement of such equilibrium points are the same for both $(X_{\Omega},\delta)$ and $(X,\delta)$, provided that in the positive $y$ direction we are in the classical case and the constrained system is Newton non-degenerated. Therefore it is sufficient to look at the origin of the $x$ direction. See Figure \ref{fig-brunellamiari-direction-y}.

\begin{figure}[h!]
  \center{\includegraphics[width=0.30\textwidth]{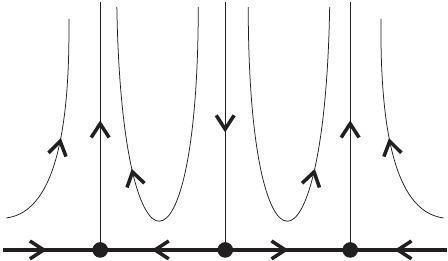}}\\
  \caption{Blow-up in the $y$-direction.}\label{fig-brunellamiari-direction-y}
\end{figure}

Applying the first blow-up with weight $\omega = (\omega_{1}, \omega_{2})$ in the $x$ direction, we obtain the strict transformed $(X^{1}_{\Omega},\delta_{1})$ and $(X^{1},\delta_{1})$. It is straightforward to see that $\delta$ is still of the form $\delta = y$, and therefore the origin is the only impasse point in the divisor. Concerning the adjoint vector field, with similar computations as in Lemma \ref{lemma-case-2-brunella} on the divisor $\mathbb{E}_{x}$ we have the same equilibrium points for both $X$ and $X_{\Omega}$. We emphasize that all the points in $\mathbb{E}_{x}\backslash\{(0,0)\}$ are elementary. Moreover, the arrangement of such equilibrium points in the divisor are the same. Observe that the origin is an equilibrium point and an impasse point at the same time.

\begin{figure}[h!]
  \center{\includegraphics[width=0.15\textwidth]{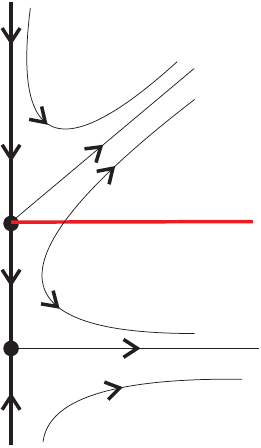}}\\
  \caption{Blow-up in the $x$-direction.}\label{fig-brunellamiari-direction-x}
\end{figure}

Due to Lemma \ref{lemma-change-coordinates-brunella}, the Newton polygons of $(X^{1}_{\Omega},\delta_{1})$ and $(X^{1},\delta_{1})$ are obtained by increasing one unity to the second coordinate of the points of $\mathcal{P}_{X^{1}}$. We must go on with the resolution process, because the origin is not an elementary point. It is clear that, at each step, the boundaries of the Newton polygons are the same and hence we apply blowing-ups with same weights to both $(X^{i}_{\Omega},\delta_{i})$ and $(X^{i},\delta_{i})$.

Essentially, we are only desingularizing equilibrium points of $X$ and $X_{\Omega}$ and preserving the impasse curve.

In the end of the process, we have three cases to consider at the impasse point $(0,0)\in\mathbb{E}_{x}$. The first two cases concern the case where the impasse set $\Delta$ does no coincide with a separatrix of the equilibrium point, that is, the Newton polygon $\mathcal{P}_{X}$ has a point of the form $(M,-1)$. The third case concerns the case where $\mathcal{P}_{X}$ does not have a point of the form $(M,-1)$ (which means that the impasse set coincides with a separatrix of the equilibrium point).

\textbf{Case 1:} The Newton polygons $\mathcal{P}_{X_{\Omega, A}}$ and $\mathcal{P}_{X_{A}}$ do not have a vertex of the form $(m, 1)$. This means that the Newton polygon $\mathcal{P}_{X}$ of $X$ does not have a vertex of the form $(m, 0)$. By the coordinate system adopted in the hypothesis, we know that $(M, -1)$ is a vertex of $\mathcal{P}_{X}$. After the last weighted blow up in the $x$ direction at the origin, it follows by Lemma \ref{lemma-case-2-brunella} that the Newton polygon of $\mathcal{P}_{\widetilde{X}}$ of the adjoint vector field $\widetilde{X}$ will contain the point $(0,-1)$, which means that the origin is not an equilibrium point of $\widetilde{X}$. Concerning equilibrium points that may appear on the divisor, they will be the same for both $(\widetilde{X}_{\Omega},\widetilde{\delta})$ and $(\widetilde{X},\widetilde{\delta})$ and the arrangement of such equilibria are the same for both pairs.

Moreover, when we look to $\mathcal{P}_{\widetilde{X}_{\Omega, A}}$ and $\mathcal{P}_{\widetilde{X}_{A}}$, we see that these polygons have the point $(0,0)$, because the origin is an impasse point (see Figure \ref{fig-brunellamiari-newton-polygon-case-1}). In brief, the origin is not an equilibrium point and it is an impasse point, so the origin is elementary. Observe that we applied the same number of blowing-ups with the same weight at each step in $(X_{\Omega},\delta)$ and $(X,\delta)$. Therefore, both pairs have the same elementary singularity scheme.

\begin{figure}[h!]
  \center{\includegraphics[width=0.50\textwidth]{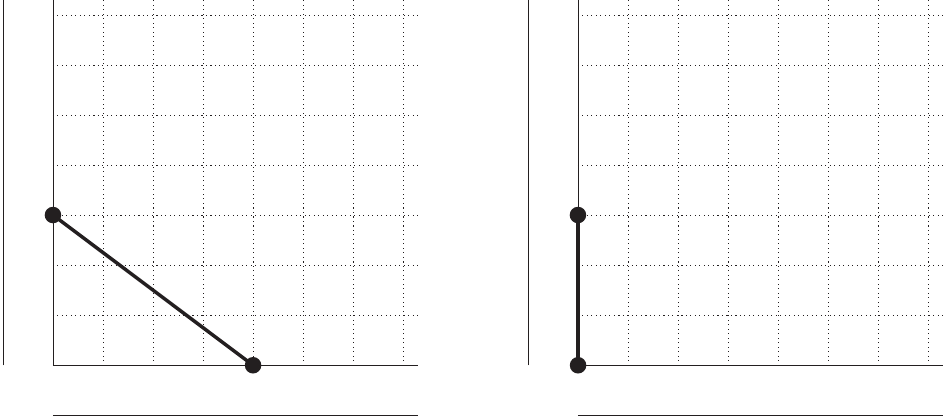}}\\
  \caption{Main segment before and after the last blowing-up in the Case 1, where the Newton polygons $\mathcal{P}_{X_{\Omega, A}}$ and $\mathcal{P}_{X_{A}}$ do not have a vertex of the form $(m, 1)$.}\label{fig-brunellamiari-newton-polygon-case-1}
\end{figure}

\textbf{Case 2:} The Newton polygons $\mathcal{P}_{X_{\Omega, A}}$ and $\mathcal{P}_{X_{A}}$ have a vertex of the form $(m, 1)$. This means that the Newton polygon $\mathcal{P}_{X}$ of $X$ has a vertex of the form $(m, 0)$. By the coordinate system adopted in the hypothesis, we know that $(M, -1)$ is also a vertex of $\mathcal{P}_{X}$, with $m < M$. After a finite number of weighted blow ups in the $x$ direction, the Newton polygon $\mathcal{P}_{\widetilde{X}}$ of the adjoint vector field $\widetilde{X}$ will have the point $(0,0)$, which means that the origin is a semi hyperbolic equilibrium point of $\widetilde{X}$. On the other hand, when we look at $\mathcal{P}_{\widetilde{X}_{\Omega, A}}$ and $\mathcal{P}_{\widetilde{X}_{A}}$, we see that these polygons have the point $(0,1)$, given that the origin is a semi hyperbolic equilibrium point and an impasse point at the same time.

So, this case is different from Case 1 because we need to apply one more blowing up in the objects $(\widetilde{X}_{\Omega},\delta)$ and $(\widetilde{X},\delta)$ in the $x$ direction. Observe that, since both $\mathcal{P}_{\widetilde{X}_{\Omega, A}}$ and $\mathcal{P}_{\widetilde{X}_{A}}$ have points of the form $(0,1)$ and $(\widehat{M},0)$, we will apply the same weighted blow up to both $(\widetilde{X}_{\Omega},\delta)$ and $(\widetilde{X},\delta)$. At the end of the process, by Lemma \ref{lemma-case-2-brunella} we see that $(X_{\Omega},\delta)$ and $(X,\delta)$ have the same elementary singularity scheme.

\begin{figure}[h!]
  \center{\includegraphics[width=0.50\textwidth]{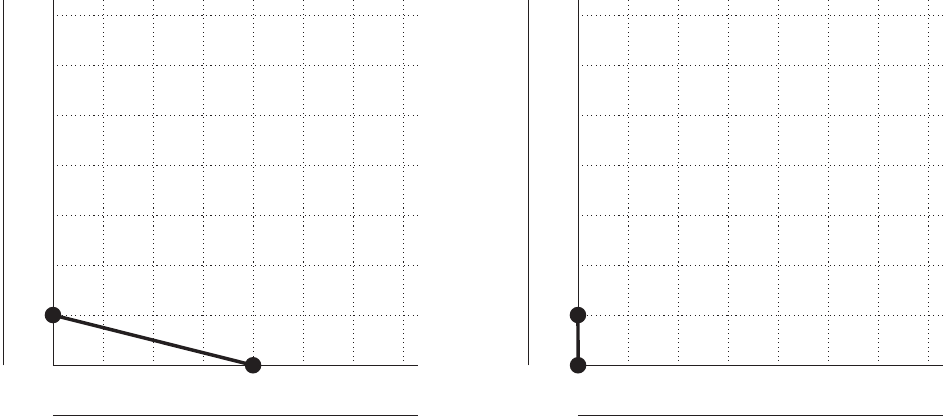}}\\
  \caption{Main segment before and after the last blowing-up in the Case 2, where the Newton polygons $\mathcal{P}_{X_{\Omega, A}}$ and $\mathcal{P}_{X_{A}}$ have a vertex of the form $(m, 1)$.}\label{fig-brunellamiari-newton-polygon-case-2}
\end{figure}

\textbf{Case 3:} Suppose that the impasse set coincides with a separatrix of $(0,0)$. Following the ideas presented in the previous cases, one sees that the process of desingularization of the constrained system is, essentially, the process of the desingularization of the adjoint vector field. After a suitable sequence of weighted blow-ups, the equilibrium points in the exceptional divisor are the same for both vector fields $X_{\Omega}$ and $X$. Furthermore, the arrangement of such equilibrium points are the same for $X_{\Omega}$ and $X$ and they are elementary. Since the impasse set coincides with a separatrix of $(0,0)\in\mathbb{E}_{x}$, it follows that $(X_{\Omega},\delta)$ and $(X,\delta)$ have the same elementary singularity scheme.
\end{proof}

Combining Theorem \ref{coroo-eq-conjugacy} and Proposition \ref{teo-brunellamiari}, we obtain the Theorem \ref{teo-brunellamiari-2}.

\subsection{Example}
\noindent

Consider the diagonalized constrained system
\begin{equation}\label{eq-exe-brunella-miari}
y\dot{x} = y^{3} + x^{2}y + x^{4}; \ y\dot{y} = x^{3} + xy^{2} + y^{4};
\end{equation}
which is already written in favorable coordinates. The support of its auxiliary vector field $X_{A}$ is the set $\mathcal{Q} = \{(-1,4),(1,2),(3,1),(3,0),(0,4)\}$, and therefore the principal part of \eqref{eq-exe-brunella-miari} is given by
\begin{equation}\label{eq-exe-brunella-miari-principal-part}
y\dot{x} = y^{3} + x^{2}y; \ y\dot{y} = x^{3} + xy^{2}.
\end{equation}

\begin{figure}[h!]
  \center{\includegraphics[width=0.25\textwidth]{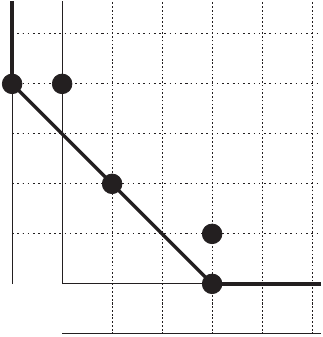}}\\
  \caption{Newton polygon of the auxiliary vector field of \eqref{eq-exe-brunella-miari}.}\label{fig-exe-brunella-miari-newton}
\end{figure}

Observe that the constrained systems \eqref{eq-exe-brunella-miari} and \eqref{eq-exe-brunella-miari-principal-part} are under the hypotheses of the Theorem \ref{teo-brunellamiari-2}. The main segment $\gamma_{1}\subset\partial\mathcal{P}_{X_{A}}$ is contained in the affine line $\{r+s = 6\}$, thus the weight vector is $\omega = (1,1)$. It can be checked that, for both systems \eqref{eq-exe-brunella-miari} and \eqref{eq-exe-brunella-miari-principal-part}, in the positive $y$ direction the points $(\pm1,0)\in\mathbb{E}_{y}$ are hyperbolic saddles of the adjoint vector field. On the other hand, in the positive $x$ direction the points $(\pm1,0)\in\mathbb{E}_{x}$ are hyperbolic saddles and the origin is an impasse point. The systems \eqref{eq-exe-brunella-miari} and \eqref{eq-exe-brunella-miari-principal-part} have the same elementary singularity scheme, and therefore they are orientation preserving $C^{0}$-orbitally equivalents. See the Figure \ref{fig-exe-brunella-miari-blow-up}.

\begin{figure}[h!]
  \center{\includegraphics[width=0.60\textwidth]{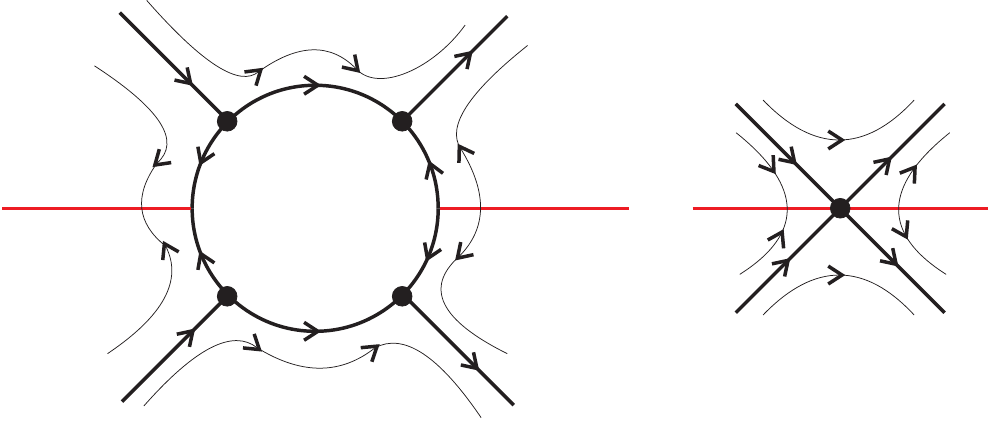}}\\
  \caption{Constrained system \eqref{eq-exe-brunella-miari} and its strict transformed.}\label{fig-exe-brunella-miari-blow-up}
\end{figure}

\section{Acknowledgments}

The first author is supported by Sao Paulo Research Foundation (FAPESP) (grants 2016/22310-0 and 2018/24692-2), and by Coordena\c c\~ao de Aperfei\c coamento de Pessoal de N\'ivel Superior - Brasil (CAPES) - Finance Code 001. The second author was financed by CAPES-Print and FAPESP. The authors are grateful for Daniel Cantergiani Panazzolo for many discussions and suggestions, and for LMIA-Universit\'e de Haute-Alsace for the hospitality during the preparation of this work.





\end{document}